\def\R{\mathbb{R}}
\def\N{\mathbb{N}}
\def\pa{\partial}
\DeclareMathOperator*{\esssup}{ess\,sup}
\DeclareMathOperator\supp{supp}
\newtheorem{theorem}{Theorem}[section]
\newtheorem{lemma}[theorem]{Lemma}
\newtheorem{proposition}[theorem]{Proposition}
\newtheorem{definition}[theorem]{Definition}
\newtheorem{remark}[theorem]{Remark}
\newtheorem{corollary}[theorem]{Corollary}
\newcommand{\myitem}[1]{%
	\item[#1]\protected@edef\@currentlabel{#1}%
}
\date{\today}
\title{Mean Field Limit for Congestion Dynamics in one dimension}
\author{Inwon Kim \and Antoine Mellet \and Jeremy Sheung-Him Wu}
\address{I. Kim, J. S.-H. Wu -- Mathematical Sciences Building, University of California, Los Angeles, CA 90095, United States.}
\email{ikim@math.ucla.edu}
\email{jeremywu@math.ucla.edu}
\address{A. Mellet -- Department of Mathematics, University of Maryland, College Park, MD 20742-4015, United States.}
\email{mellet@math.umd.edu}
\thanks{I. Kim was partially supported by NSF Grant DMS-2153254. \\
A. Mellet was partially supported by NSF Grant DMS-2009236 and DMS-2307342.
}
\begin{document}
	\maketitle
	\begin{abstract}
This paper addresses congested transport, which can be described, at macroscopic scales, by a continuity equation with a pressure variable generated from the hard-congestion constraint (maximum value of the density). The main goal of the paper is to show that, in one spatial dimension, this continuum PDE can be derived as the mean-field limit of a system of ordinary differential equations that describes  the motion of a large number of particles constrained to stay at some finite distance from each others. To show that these two models describe the same dynamics at different scale, we will rely on both the Eulerian and Lagrangian points of view and use two different approximations for the density and pressure variables in the continuum limit.
	\end{abstract}

	\section{Introduction}
	\label{sec:intro}
In this paper we consider a simple one dimensional model for congestion dynamics of $N$ particles moving along the real line. Their desired velocity field is given by the gradient of a smooth potential $\phi:\R\to\R$, but the actual motion of the particles must balance this velocity with a hard congestion constraint, which requires that the particles remain at some distance $2r>0$ from each other at all time.
As in \cite{MRSV11}, we assume that the actual velocity of each particle is then the projection of the desired velocity onto the set of feasible velocities so that the trajectories preserve the non-overlapping constraint~\eqref{non_overlap}. Denoting by $x_i(t)$ the center of particle positions, the equation is
\begin{equation}
		\label{eq:EL_r}
			\dot{x}_i(t) = -\phi'(x_i(t)) - N(\lambda_i(t) - \lambda_{i-1}(t)), 	\qquad \forall t\in[0,T],\qquad \forall i=1,\dots, N.
	\end{equation}
Here the $\lambda_i(t)$ are \textit{Lagrange multipliers} introduced to enforce the non-overlapping condition which reads:
	\begin{equation}\label{non_overlap}
		x_{i+1}(t) - x_i(t) \geq  2r,   \qquad \forall t\in[0,T],\qquad \forall i=1,\dots, N-1.
	\end{equation}
 The equation for $\lambda_i(t)$ is then given by (see \cite{MRSV11}):
	\begin{equation}
		\label{eq:slack_r}
		 \lambda_i\ge 0, \, \qquad \lambda_i(x_{i+1} - x_i - 2r)=0,\qquad \forall t\in[0,T] ,  \qquad \forall i=1,\dots, N-1.
	\end{equation}
Since the first and last particles are unconstrained from one direction, we have
	\begin{equation}\label{eq:lambdabc}
		\lambda_0(t) = \lambda_N(t) = 0, \quad \forall t\in[0,T].
\end{equation}
Finally, we supplement~\eqref{eq:EL_r} with initial data $x_i(0):=x_i^0$ where the $x_i^0$ satisfy the non-overlapping condition~\eqref{non_overlap}.
The derivation of~\eqref{eq:EL_r} and in particular the role of condition \eqref{eq:slack_r} will be made clear in~\Cref{sec:micro}. We refer to~\cite{MRSV11} (see also~\cite{M18}) for a further interpretation of this system, its generalization in higher dimension, and its connection to sweeping processes. 
We will also provide a proof of the existence and uniqueness of solutions (see Proposition \ref{prop:exist}), using a slightly different discrete-time scheme than the one used in \cite{MRSV11}, with an emphasis on the macroscopic limit.

\medskip

When the number of particles $N$ is very large and the radius $r$ scales like $1/N$ (so that the total volume occupied by the particles stays of order $1$), we would like to describe the dynamics encoded by the system \eqref{eq:EL_r}--\eqref{eq:lambdabc} 
by some effective macroscopic equation that 
does not track the position of individual particles, but instead models the evolution of a density distribution function $\rho(t,x)$.
The description of hard congestion at such macroscopic scale has been a popular topic of research in recent decades, with applications in particular in the modeling of crowd motion \cite{MRS10,MV11,MRSV11,AKY14,M18, KPW19, S_survey,LMS20}, tumor growth \cite{PQV,KP18, MPQ,DS} and aggregation phenomena \cite{CKY,KMW23,KMW24}. At the macroscopic level, the non-overlapping constraint is usually expressed by prescribing a maximal value for the density of particles, leading to the following equation:  
	\begin{equation}
		\label{eq:macro}
		\partial_t \rho - \pa_x(\rho (\pa_x \phi+\pa_x p))=0,  \qquad p\geq 0 \quad 0\leq \rho \le 1, \quad p(1-\rho) = 0, \quad \text{in }(0,T)\times \R,
			\end{equation}
where the pressure function $p(t,x)$, like the $\lambda_i(t)$ in \eqref{eq:EL_r}, is a Lagrange multiplier which enforces the  maximal density (or ``incompressibility") constraint $\rho \le 1$.
\medskip

In many of the references mentioned above (see for instance \cite{AKY14,KP18, PQV,CKY}), the equation \eqref{eq:macro} is derived as the limit of a nonlinear diffusion model (the so-called incompressible limit of the porous media equation). 
In \cite{MRSV11} it is introduced directly by defining the velocity field $-\pa_x \phi+\pa_x p$ as the projection of the spontaneous velocity $-\pa_x \phi$ onto the set of feasible velocity fields, which preserves the constraint $\rho \leq 1$. 
This interpretation of \eqref{eq:macro} shows that the microscopic and macroscopic models rely on the same underlying mechanism:
hence one might expect that when the number of particles $N$ is very large and the size of the particles $r$ satisfies $r\sim \frac 1 N$, 
the solutions of the system  ~\eqref{eq:EL_r}--\eqref{eq:lambdabc} can be effectively described by a density distribution solving  \eqref{eq:macro}. 

It is important to point out that such convergence does not appear to be true, at least with any straightforward extension of the microscopic system, in dimensions higher than $1$: this is because the dynamics of the microscopic model is much richer than that of the macroscopic one in higher dimensions.
More precisely the cluster shape of packed particles depend sensitively on the particle shape, while the macroscopic limit loses this information~\cite[Section 5]{MRSV11}. 
We refer to section 4 of the survey article~\cite{KMW24} for further discussions on this issue as well as a review of different approximations and partial results in this direction. 
In contrast, our goal in this article is to stay in one space dimension and make the derivation of \eqref{eq:macro}
from \eqref{eq:EL_r}--\eqref{eq:lambdabc} rigorous. We will show that the empirical distribution
\begin{equation}\label{eq:empirical} 
\rho_N(t,x) := \frac 1 N \sum_{i=1}^N \delta(x-x_i(t)),
\end{equation}
where $x_1(t),\dots ,x_N(t)$ solve \eqref{eq:EL_r}--\eqref{eq:lambdabc} for some appropriate initial data, converges to a solution of \eqref{eq:macro} when $N\to\infty$ with $2r=1/N$.

\medskip

An interesting challenge, even in this one dimensional framework, is that the empirical distribution $\rho_N(t,x)$ is ill-suited for a derivation of the 
 pointwise macroscopic constraint $\rho\leq1$. 
This will lead us to introduce another approximation of the macroscopic density:
\begin{equation}
\label{eq:histr}
\tilde{\rho}_N(t,x) = \sum_{i=0}^{N-1}\frac{1}{N(x_{i+1} (t)- x_{i}(t))}\chi_{[x_{i+1}(t),x_{i}(t))}(x),
\end{equation} 
for which the constraint $\tilde{\rho}_N\leq 1$ is equivalent to \eqref{non_overlap} (note that in order to keep the mass of  $\tilde{\rho}$ 
equal to $1$, we added a point $x_0(t):=x_1(t)-\frac 2 N$). Lemma \ref{lem:dist_emp_q} shows that $\rho_N$ and $\tilde\rho_N$ are vanishingly close in some weak sense.
	\medskip

The main idea to establish a rigorous link between \eqref{eq:EL_r}--\eqref{eq:lambdabc} and \eqref{eq:macro},
is to proceed as in \cite{FKS24} and interpret the particle index $i$ as a discrete Lagrangian variable by defining 
$$ \Delta s = \frac 1 N, \qquad s_i = i\Delta s, \qquad i=1,\dots, N,$$
and introducing functions $X_N(t,s)$ and $\Lambda_N(t,s)$ such that
$$ X_N(t,s_i)=x_i(t), \qquad \Lambda_N(t,s_i) = \lambda_i(t),\qquad i=1,\dots, N.$$
When $2r: =\frac 1 N=\Delta s$, the formal limit $N\to\infty$ of \eqref{eq:EL_r}--\eqref{eq:lambdabc} yields
	\begin{equation}
		\label{ode:1}
		\left\{
		\begin{array}{ll}
			\partial_t X = -\phi'(X) -\partial_s \Lambda &\text{in }[0,T]\times[0,1] 	\\
			\Lambda(t,0) = \Lambda(t,1) = 0 	&\text{in }[0,T]
		\end{array}
		\right.
	\end{equation}
	together with the  condition	
	\begin{equation}
		\label{ode:2}
		\Lambda\ge 0,\,\partial_s X \ge 1,\text{ and } \Lambda(1 - \partial_s X) = 0, \quad \text{for almost every }(t,s)\in[0,T]\times[0,1].
	\end{equation}
	
 Theorem \ref{thm:q1} will make the above computations precise, providing a first rigorous derivation of~\eqref{eq:macro} from the Lagrangian framework of particle dynamics.  For a quick illustration, we first present here a formal derivation of the system  \eqref{ode:1}--\eqref{ode:2} from \eqref{eq:macro}. For a given density $\rho(t,x)$, the \textit{pseudo-inverse} of its cumulative distribution function (or the \textit{quantile function}) is given by
	\begin{equation}\label{eq:dist}
		X(t,s) = X_{\rho(t)}(s) := \inf\left\{x\in \R \, \left| \, \int_{-\infty}^x \rho(t,y) \right. \mathrm{d}y \ge s\right\}, \quad (t,s)\in[0,T]\times [0,1],
	\end{equation}
we can  think of $X(t,s)$ as the `location of the particle at time $t$ corresponding to the $s$--th percentile.'
For later purposes, we note that this relation can be inverted using the push-forward map (c.f.~\cite[Proposition 2.2]{S15}):
	\begin{equation}
		\label{eq:push_forward}
		\rho(t,\cdot) := X(t,\cdot)_\# (\mathrm{d}s ),
	\end{equation}
where $\mathrm{d}s$ denotes  the restriction of the Lebesgue measure on $(0,1)$. 
	When $\rho$ is bounded, \eqref{eq:dist} implies
	\begin{equation}
		\label{eq:x_rho}
		\int_{-\infty}^{X(t,s)}\rho(t,y)\, \mathrm{d}y = s.
	\end{equation}
which gives (differentiating with respect to $s$):
		\begin{equation}
			\label{eq:x_rho_omega}
			\rho(t,X(t,s))\, \partial_sX = 1.
		\end{equation}
We now claim that $X$ solves \eqref{ode:1}--\eqref{ode:2} if $\rho$ solves \eqref{eq:macro}. To see this, first note that the saturation condition $p(1-\rho) = 0$ and $0\leq \rho \le 1$ can be written, in terms of $p$ and $X$, as
		\[
		p(t,X(t,s))(\partial_s X(t,s)-1)=0, \quad \text{and}\quad \partial_s X \ge 1.
		\]
which is \eqref{ode:2} if we define  $\Lambda$ by
	\begin{equation}
		\label{def:pressure}
		\Lambda(t,s) := p(t,X(t,s)), \quad \forall (t,s)\in[0,T]\times [0,1].
	\end{equation}
Furthermore, the time derivative of~\eqref{eq:x_rho} leads to
	\[
	(\partial_t X) \rho(t,X(t,s)) + \int_{-\infty}^{X(t,s)}\partial_t\rho(t,y)\, \mathrm{d}y = 0.
	\]
	Since $\rho$ solves~\eqref{eq:macro}, we get
	\begin{align*}
		0 	= (\partial_t X)\rho(t,X(t,s)) + \int_{-\infty}^{X(t,s)}\partial_y(\rho\partial_y(\phi + p))\, \mathrm{d}y 	
		 = \rho(t,X(t,s))\, (\phi'(X(t,s)) + \partial_t X + \partial_x p(t,X(t,s))).
	\end{align*}
	Therefore, on the support of $\rho$, we obtain
	\[
	\partial_t X  = - \phi'(X(t,s))- \partial_x p(t,X(t,s)).
	\]
Observe that the saturation condition $p(1-\rho) = 0$ formally implies $\partial_xp = \rho\partial_xp$, and thus
	\[
	\partial_s \Lambda(t,s) = \partial_x p(t,X(t,s)) \, \partial_s X = \partial_x p(t,X(t,s)).
	\]
	Hence the above equation for $X$ yields \eqref{ode:1}, and we have shown the claim. The homogeneous boundary conditions for $\Lambda(t,s)$ at $s=0,1$ in~\eqref{ode:1} correspond to the absence of pressure exerted on the left and right-most particles.
	\medskip
	
These computations can be justified rigorously at least when the potential $\phi:\R\to \R$ is regular enough. Throughout the paper we assume that $\phi$ is $C^2$ and satisfies

\begin{equation}\label{phi}
	c_2:=\sup_{x\in \R}|\phi''|(x) <+\infty\,\text{ and }\phi(x) \ge c_0(1+|x|^2), \text{ for some }c_0,c_2>0.
\end{equation}

	\begin{theorem} 
		\label{thm:wp_micro}
		Suppose that $\phi$ satisfies \eqref{phi}.
		For a given $X^0:[0,1]\to \R$ satisfying 
	\begin{equation}\label{x^0} 
 X^0(s_2)-X^0(s_1) \ge s_2-s_1 \quad \mbox{ for all }  0\leq s_1<s_2\leq 1,
\end{equation}
there is a {\bf unique} pair  $X\in H^1(0,T; \, L^2(0,1))$ and $\Lambda \in L^2(0,T; \, H^1(0,1))$ solving ~\eqref{ode:1}--\eqref{ode:2} with $X(0,s) = X^0(s)$.
		
		Furthermore, the pair of functions $(\rho,p)$ defined from $(X,\Lambda)$ by~\eqref{eq:push_forward} and~\eqref{def:pressure} is the {\bf unique} weak solution 
			to~\eqref{eq:macro} with initial condition $\rho(0,\cdot) = \rho^0:=X^0(\cdot)_\# (\mathrm{d}s ).$
	\end{theorem}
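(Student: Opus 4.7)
The plan is to view the system \eqref{ode:1}--\eqref{ode:2} as a semi-convex constrained gradient flow in the Hilbert space $L^2(0,1)$, with the admissible set
\[
\K := \{X\in L^2(0,1)\, : \, X(s_2)-X(s_1)\ge s_2-s_1 \text{ for all } 0\le s_1<s_2\le 1\},
\]
and driving energy $\E(X):=\int_0^1 \phi(X(s))\,ds$. Assumption \eqref{phi} makes $\E$ coercive and $(-c_2)$-convex (since $\phi''\ge -c_2$), so the abstract theory of Brezis for gradient flows of $\lambda$-convex functionals on closed convex sets provides a unique flow $X\in H^1(0,T;L^2(0,1))$ with values in $\K$ starting from any $X^0\in\K$. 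I would implement this explicitly through the minimizing-movement scheme
\[
X^{n+1}=\arg\min_{X\in\K}\Big\{\tfrac{1}{2\tau}\|X-X^n\|_{L^2}^2+\E(X)\Big\},
\]
because its Euler--Lagrange variational inequality will produce the Lagrange multiplier $\Lambda$ constructively. Standard discrete energy estimates yield uniform $H^1_tL^2_s$ bounds on the interpolants, and a compactness/lower-semicontinuity argument delivers the limit $X$ together with the initial condition $X(0,\cdot)=X^0$.

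The core technical step is to extract $\Lambda\in L^2(0,T;H^1(0,1))$ with the proper sign, boundary conditions, and complementarity relation. At each step $n$, minimality yields $\la F^{n+1},Y-X^{n+1}\ra_{L^2}\ge 0$ for all $Y\in\K$, where $F^{n+1}:=\tau^{-1}(X^{n+1}-X^n)+\phi'(X^{n+1})\in L^2(0,1)$. Testing against variations $Y=X^{n+1}+\psi$ with $\psi$ vanishing at the endpoints and $\pa_s\psi$ of appropriate sign identifies $-F^{n+1}$ with $\pa_s\Lambda^{n+1}$ for a unique $\Lambda^{n+1}\in H^1(0,1)$ satisfying $\Lambda^{n+1}(0)=\Lambda^{n+1}(1)=0$, $\Lambda^{n+1}\ge 0$, and $\Lambda^{n+1}(1-\pa_sX^{n+1})=0$. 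The uniform $L^2$ control on $F^{n+1}$ transfers to a uniform $L^2_tH^1_s$ bound on the interpolant $\Lambda_\tau$, and passing to the limit $\tau\to 0$ in the discrete identity preserves all constraints.

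Uniqueness at the Lagrangian level follows from a standard contractivity argument. Subtracting two solutions, pairing with $X_1-X_2$ in $L^2$, and integrating the $\pa_s\Lambda$ contribution by parts (the boundary terms vanish), the cross term rearranges using $\Lambda_i\pa_sX_i=\Lambda_i$ into
\[
\int_0^1[\Lambda_1(1-\pa_sX_2)+\Lambda_2(1-\pa_sX_1)]\,ds\le 0,
\]
since $\Lambda_i\ge 0$ and $\pa_s X_j\ge 1$. Combined with the $c_2$-Lipschitz control on $\phi'$, Gr\"onwall's inequality forces $X_1\equiv X_2$.

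For the Eulerian statement, I set $\rho(t,\cdot):=X(t,\cdot)_\#(ds)$ and define $p$ by $p(t,X(t,s)):=\Lambda(t,s)$; this is unambiguous on $\supp\rho$ because $\Lambda(1-\pa_sX)=0$ forces $\Lambda$ to be constant on every ``plateau'' of $X$ (i.e., on preimages of atoms-free points of $\rho$). The formal computation given in the introduction then becomes rigorous with the available regularity and shows that $(\rho,p)$ is a weak solution of \eqref{eq:macro}. Conversely, any weak solution $(\bar\rho,\bar p)$ with $\bar\rho(0)=\rho^0$ generates, via the pseudo-inverse \eqref{eq:dist} and $\bar\Lambda(t,s):=\bar p(t,\bar X(t,s))$, a pair $(\bar X,\bar\Lambda)$ satisfying \eqref{ode:1}--\eqref{ode:2}, and Lagrangian uniqueness closes the loop. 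The step I expect to be the most delicate is precisely this push-forward identification: connecting the rough Lagrangian multiplier $\Lambda$ to a legitimate Eulerian pressure $p$ on sets where $X$ fails to be strictly increasing (equivalently, where $\rho$ vanishes). The relation $\Lambda(1-\pa_sX)=0$ is the key structural tool that localizes all such ambiguities to $\rho$-null sets, where \eqref{eq:macro} is insensitive to the choice of $p$.
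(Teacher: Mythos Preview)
Your uniqueness argument for the Lagrangian system is exactly the paper's (Proposition~4.1), and your verification that $(\rho,p)$ defined from $(X,\Lambda)$ is a weak solution of \eqref{eq:macro} matches Section~5. However, you diverge from the paper in two places.

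For \emph{existence} of $(X,\Lambda)$, the paper does not run a minimizing-movement scheme at the continuum level. Instead it obtains existence as a byproduct of the mean-field limit: one samples $X^0$ by particles $\mathbf{X}_N^0\in\mathcal{K}_N$ (Lemma~A.1), solves the finite system \eqref{eq:EL_r}--\eqref{eq:lambdabc} via a discrete JKO (Section~2), and then passes $N\to\infty$ (Section~3, Propositions~3.4 and~3.5). Your direct Hilbert-space gradient-flow approach on $\K\subset L^2(0,1)$ is correct and arguably cleaner for the purpose of Theorem~\ref{thm:wp_micro} alone---the characterization of the normal cone to $\K$ as $\{-\partial_s\Lambda:\Lambda\in H^1_0,\ \Lambda\ge0,\ \Lambda(\partial_sX-1)=0\}$ follows from testing against translations (giving $\int_0^1 F^{n+1}=0$, hence $\Lambda^{n+1}\in H^1_0$) and against $Y=X^{n+1}+\psi$ with $\partial_s\psi\ge0$ or $\partial_s\psi\le0$ supported on $\{\partial_sX^{n+1}>1\}$. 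The paper's route, by contrast, is dictated by the goal of the article (the micro-to-macro derivation), so existence comes for free from the convergence theorem; your route is self-contained but does not contribute to that derivation.

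For \emph{uniqueness of the Eulerian weak solution}, the paper does not carry out the converse direction you outline (building $(\bar X,\bar\Lambda)$ from a weak solution $(\bar\rho,\bar p)$ and invoking Lagrangian uniqueness). It simply cites the uniqueness result of Di~Marino--M\'esz\'aros \cite{DMM16}. Your strategy is viable and makes the Lagrangian/Eulerian correspondence fully bidirectional, but be aware that rigorously justifying the formal computation of the introduction in this direction (chain rules for $\bar X$ when $\bar\rho$ may vanish on intervals, and the identity $\partial_s\bar\Lambda=\partial_x\bar p(\cdot,\bar X)$) requires care that the paper chose to outsource.
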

	\begin{remark}
	We will also prove  (see~\Cref{lem:omega_i_exp}) that  the solution of \eqref{ode:1}--\eqref{ode:2}  satisfies
			$$1\leq  \pa_s X(t,s) \leq \pa_s X^0(s) e^{\|\phi''\|_{L^\infty} t},\qquad \forall t\geq 0,$$
			where this inequality should be understood in the sense of Radon measures (both $X(t,\cdot) $ and $X^0$ are BV functions). This implies in particular that the function $s\mapsto X(t,s)$ cannot have a jump discontinuity at $s_0$ unless the initial condition already had a  jump discontinuity at $s_0$.
			Such discontinuities correspond to intervals where the density $\rho$ vanishes so this tells us that the number of connected components of $\supp \rho(t)$ cannot increase in time. 
	\end{remark}
	Note that  \eqref{x^0} implies in particular that $X^0$ is monotone increasing and satisfies $(X^0)'(s)\geq 1$, which is the continuous expression of the non-overlapping condition in Lagrangian coordinates. 
The uniqueness results included in Theorem  \ref{thm:wp_micro} means that there is a one-to-one correspondance between the Lagrangian formulation \eqref{ode:1}--\eqref{ode:2} and the Eulerian one \eqref{eq:macro} for the macroscopic description of congested dynamic in dimension $1$.

\medskip

We recall here the classical definition (we refer to~\Cref{sec:prelim} for the meaning of $AC([0,T];\mathscr{P}_2)$ and other preliminaries) of weak solution for \eqref{eq:macro}: 
\begin{definition}[Weak solutions to~\eqref{eq:macro}]
	\label{def:wk_macro}
	For a given $T>0$ and $\phi$ satisfying \eqref{phi}, the pair $(\rho, \, p) \in  AC([0,T];\mathscr{P}_2)\times  L^2(0,T;\, H^1(\R))$ is a \textit{weak solution} to~\eqref{eq:macro} if
	\begin{equation}\label{wk_sat}  
	p(1-\rho)=0,\quad p\ge0, \mbox{ and } 0\le \rho \le 1 \mbox{ a.e. } (t,x)\in[0,T]\times \R.
	\end{equation}	
		and
		\begin{equation}
			\label{eq:wk_test}
			\partial_t\rho = \partial_x(\rho\partial_x\phi) + \partial_{xx}p, \hbox{ in } \mathcal{D}'((0,T)\times \R).
		\end{equation}
Note that, since \eqref{wk_sat} implies that $\partial_xp = \rho\partial_xp$ a.e., the pair $(\rho,p)$ also satisfies \eqref{eq:macro}.
\end{definition}
	
The existence and uniqueness of weak solutions in the sense of Definition \ref{def:wk_macro} is known (see~\cite{DMM16} for the uniqueness result) when the initial condition satisfies, for instance:
	\begin{equation}\label{rho^0}
	0\leq \rho^0	\leq 1, \qquad \int_\R \rho^0(x)\,\mathrm{d}x =1, \qquad\mbox{and } \supp \rho^0 \mbox{ is compact}.
	\end{equation}

\medskip

The existence of $(X,\Lambda)$ in Theorem  \ref{thm:wp_micro}  will be a consequence of Theorem \ref{thm:q1} below which proves the convergence of the system \eqref{eq:EL_r}--\eqref{eq:lambdabc} to \eqref{ode:1}--\eqref{ode:2} when $N\to\infty$.
For the sake of completeness, we will thus give a proof of the existence of a solution to \eqref{eq:EL_r}--\eqref{eq:lambdabc}:
	\begin{proposition}[Well-posedness of the discrete system]
		\label{prop:exist}
		For any $r>0$ and $N\in\N$, given $ \mathbf{X}^0=(x_1^0,\dots x_N^0)\in \R^N$ such that
		\begin{equation}\label{eq:x_0N}
		x^0_{i+1} - x^0_i \geq 2r, \qquad \forall i=1, \dots, N-1 ,
		\end{equation} 
		  there exist unique $ \mathbf{X}(t)=(x_1(t),\dots,x_N(t)) \in H^1(0,T;\, \R^N)$ and $ \mathbf{\Lambda}(t)=(\lambda_0(t),\lambda_1(t),\dots,\lambda_N(t))   \in L^2(0,T;\,\R^{N+1})$ solutions of  ~\eqref{eq:EL_r}--\eqref{eq:lambdabc}.
	\end{proposition}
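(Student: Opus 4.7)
The plan is to prove existence via an implicit (JKO-type) time discretization tailored to the constrained gradient-flow structure, and uniqueness via a Grönwall estimate exploiting the convexity of the admissible set. Let $K_N := \{\mathbf{X}\in\R^N : x_{i+1}-x_i \geq 2r,\ i=1,\dots,N-1\}$ and $\Phi_N(\mathbf{X}):=\sum_{i=1}^N \phi(x_i)$. A direct computation identifies the normal cone: $N_{K_N}(\mathbf{X})$ is exactly the set of vectors of the form $(N(\lambda_i-\lambda_{i-1}))_{i=1}^N$ with $\lambda_0=\lambda_N=0$, $\lambda_i\geq 0$, and $\lambda_i(x_{i+1}-x_i-2r)=0$. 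Hence \eqref{eq:EL_r}--\eqref{eq:lambdabc} is equivalent to the differential inclusion $\dot{\mathbf{X}} \in -\nabla\Phi_N(\mathbf{X}) - N_{K_N}(\mathbf{X})$ with $\mathbf{X}(0)=\mathbf{X}^0\in K_N$.

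For a step $\tau>0$ with $\tau c_2<1$, I define recursively $\mathbf{X}^{n+1}_\tau := \mathrm{argmin}_{\mathbf{Y}\in K_N} \{\tfrac{1}{2\tau}|\mathbf{Y}-\mathbf{X}^n_\tau|^2 + \Phi_N(\mathbf{Y})\}$, with strict convexity providing existence and uniqueness of each $\mathbf{X}^{n+1}_\tau$. The KKT conditions furnish discrete multipliers $\lambda^{n+1}_{i,\tau}\geq 0$ satisfying $\lambda^{n+1}_{i,\tau}(x^{n+1}_{i+1,\tau}-x^{n+1}_{i,\tau}-2r)=0$ and the discrete Euler equation $(x^{n+1}_{i,\tau}-x^n_{i,\tau})/\tau = -\phi'(x^{n+1}_{i,\tau}) - N(\lambda^{n+1}_{i,\tau}-\lambda^{n+1}_{i-1,\tau})$. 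Plugging $\mathbf{Y}=\mathbf{X}^n_\tau$ into the minimization yields the discrete energy inequality $\Phi_N(\mathbf{X}^{n+1}_\tau) + \tfrac{1}{2\tau}|\mathbf{X}^{n+1}_\tau-\mathbf{X}^n_\tau|^2 \leq \Phi_N(\mathbf{X}^n_\tau)$, which together with the coercivity in \eqref{phi} controls $\sup_{n,\tau}|\mathbf{X}^n_\tau|$ and $\sum_n \tau^{-1}|\mathbf{X}^{n+1}_\tau-\mathbf{X}^n_\tau|^2$. Solving the Euler equation recursively from $\lambda^{n+1}_{0,\tau}=0$ then gives an $L^2$-in-time bound on the multipliers. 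Interpolating piecewise affinely (for $\mathbf{X}^\tau$) and piecewise constantly (for $\overline{\mathbf{\Lambda}}^\tau$), Arzel\`a-Ascoli plus weak $L^2$ compactness produce, up to a subsequence, a limit $(\mathbf{X},\mathbf{\Lambda})$ with $\mathbf{X}\in H^1(0,T;\R^N)$ and $\mathbf{\Lambda}\in L^2(0,T;\R^{N+1})$. The non-overlapping constraint $\mathbf{X}(t)\in K_N$ passes through the uniform limit, and the sign and boundary conditions on $\mathbf{\Lambda}$ pass through the weak limit; passing to the limit in the Euler equation is immediate by weak $L^2$-convergence of the discrete time derivative and of $\overline{\mathbf{\Lambda}}^\tau$ combined with strong convergence of $\phi'(\mathbf{X}^\tau)$.

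For uniqueness, let $(\mathbf{X}^j,\mathbf{\Lambda}^j)$, $j=1,2$, be two solutions with the same initial data, and set $\mathbf{n}^j(t):=(N(\lambda^j_i-\lambda^j_{i-1}))_i\in N_{K_N}(\mathbf{X}^j(t))$. Convexity of $K_N$ yields the monotonicity $\langle \mathbf{n}^1-\mathbf{n}^2,\mathbf{X}^1-\mathbf{X}^2\rangle\geq 0$ a.e., while $|\phi''|\leq c_2$ gives the semi-convex bound $\langle\nabla\Phi_N(\mathbf{X}^1)-\nabla\Phi_N(\mathbf{X}^2),\mathbf{X}^1-\mathbf{X}^2\rangle\leq c_2|\mathbf{X}^1-\mathbf{X}^2|^2$, so
\[
\tfrac{d}{dt}\tfrac{1}{2}|\mathbf{X}^1-\mathbf{X}^2|^2 \leq c_2 |\mathbf{X}^1-\mathbf{X}^2|^2,
\]
and Grönwall forces $\mathbf{X}^1\equiv\mathbf{X}^2$; the Euler equation, together with $\lambda_0\equiv 0$, then determines $\mathbf{\Lambda}$ uniquely via the telescoping formula $N\lambda_i = -\sum_{j=1}^i(\phi'(x_j)+\dot x_j)$. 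The main obstacle I anticipate is the passage to the limit in the nonlinear complementarity $\lambda_i(x_{i+1}-x_i-2r)=0$, because the discrete multipliers only converge weakly in $L^2$; this is overcome by weak--strong pairing, since the uniform convergence of $\mathbf{X}^\tau$ delivered by the discrete energy inequality promotes $(x^\tau_{i+1}-x^\tau_i-2r)$ to a strongly convergent factor against which the weakly convergent $\overline\lambda^\tau_i$ can be multiplied. All other steps are standard consequences of the energy dissipation and the coercivity/semi-convexity of $\phi$.
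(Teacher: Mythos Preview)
Your proposal is correct and follows essentially the same route as the paper: a JKO/minimizing-movement scheme on $K_N$, the energy inequality to bound positions and discrete velocities, a telescoping sum to bound the multipliers, and a weak--strong pairing to pass to the limit in the complementarity relation $\lambda_i(x_{i+1}-x_i-2r)=0$. The one noteworthy difference is in the uniqueness step: the paper identifies the system with the differential inclusion $\dot{\mathbf X}+C(\mathbf X)^\circ\ni -\Phi'(\mathbf X)$ and then simply quotes the uniqueness theorem for sweeping processes from \cite{MV11}, whereas you supply the underlying Gr\"onwall argument directly, using monotonicity of the normal-cone operator of the convex set $K_N$ together with the $c_2$-Lipschitz bound on $\nabla\Phi_N$. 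Your version is more self-contained; the paper's citation is shorter but hides exactly the computation you wrote out.
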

	\Cref{prop:exist} is not new in the literature, it can be considered as a particular case of Theorem 2.10 and Remark 2.11 in~\cite{MV11}. However, we provide an explicit construction to clarify estimates on $x_i(t)$ and $\lambda_i(t)$ which are essential in passing to the limit $N\to \infty$.
\medskip
	
We can now make precise the construction of the functions $X_N(t,s)$ and $\Lambda_N(t,s)$. For a given $N\in \N$ with $2r=\frac 1 N$, consider a solution $ (\mathbf{X}(t), \mathbf{\Lambda}(t))$ of \eqref{eq:EL_r}--\eqref{eq:lambdabc} given by \Cref{prop:exist}. We define  the piecewise {\bf constant} function $X_N$ by
\begin{equation}\label{eq:XN}
X_N(t,s) = x_i(t), 	\quad s\in\left(\frac{i-1}{N}, \frac{i}{N}\right], \quad i=1,\dots, N,
\end{equation}
and the continuous piecewise {\bf linear} function $\tilde \Lambda_N$ such that $ \tilde\Lambda_N(t,s_i) = \lambda_i(t)$ for all $i=0,\dots, N$, that is
\begin{equation}\label{eq:LambdaN}
	 \tilde{\Lambda}_N(t,s):= N(\lambda_i(t) - \lambda_{i-1}(t))\left(s - \frac{i-1}{N}\right)+\lambda_{i-1}(t), \quad s\in\left[\frac{i-1}{N}, \frac{i}{N}\right], \quad i=1,\dots, N.
\end{equation}
Note that $X_N(t,s)$ is given by \eqref{eq:dist} corresponding to the empirical distribution $\rho_N(t,x)$ defined by \eqref{eq:empirical}.
Our motivation for the usage of $ \tilde{\Lambda}_N$ over a piecewise-constant interpolation is that, besides the obvious advantage of regularity,
  the pair
 $(X_N, \tilde{\Lambda}_N)$ solves \eqref{ode:1}, which makes the limiting process a little easier.
	
\medskip

Our first convergence theorem is the following:
		\begin{theorem}[Convergence in Lagrangian coordinates]
		\label{thm:q1}
For all $N\in \mathbb{N}$, 
		  let $(X_N(t), \tilde{\Lambda}_N(t))$ be the interpolations as given above by \eqref{eq:XN}--\eqref{eq:LambdaN}. Assume further that the initial condition $X_N(0,s)=X_N^0(s)$ satisfies
		  \begin{align}
		 	\label{x^0_phi}
		\bar \phi_N:= \sup_{N\in\mathbb{N}} \int_0^1\phi( X_N^0(s))\, \mathrm{d}s &<+\infty, 	\\
	\label{bv_endpts}
 \sup_{N\in\mathbb{N}} |X_N^0(1) -X_N^0(0)| &<+\infty,\text{ and} 	\\
	\label{eq:x^0_conv}
			\|X_N^0 - X^0\|_{L_s^1(0,1)}&\overset{N\to \infty}{\to} 0,
\end{align}
for some $X^0:[0,1]\to \R$ satisfying~\eqref{x^0}.
Then the following convergences hold:
$$			X_N \overset{N\to\infty}{\to} X\quad  \mbox{ strongly in $C([0,T]; \, L^p(0,1))$ for any $p\in[1,2)$}$$
and
$$
\tilde {\Lambda}_N \overset{N\to\infty}{\rightharpoonup} \Lambda\quad \mbox{ weakly in $L^2(0,T; H^1(0,1))$}, 
 $$
 where $(X,\Lambda)$ 
 is the  unique solution of \eqref{ode:1}--\eqref{ode:2}  provided by Theorem \ref{thm:wp_micro} with initial condition $X^0$.	 
			 	\end{theorem}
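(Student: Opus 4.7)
The plan is standard for such mean-field limits: derive uniform-in-$N$ estimates on $(X_N,\tilde\Lambda_N)$, extract converging subsequences, verify that any limit is a solution of \eqref{ode:1}--\eqref{ode:2}, and invoke the uniqueness of Theorem~\ref{thm:wp_micro} to upgrade subsequential convergence to convergence of the full sequence.

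For the a priori estimates, I would begin at the discrete level. Multiplying \eqref{eq:EL_r} by $\dot x_i/N$, summing in $i$, and performing Abel summation, the pressure cross term vanishes via \eqref{eq:slack_r}, which forces $\lambda_i(\dot x_{i+1}-\dot x_i)=0$ (when $\lambda_i>0$ the particles are in contact so $\dot x_{i+1}=\dot x_i$). This yields the energy identity
\[
\frac{d}{dt}\Bigl(\frac{1}{N}\sum_{i=1}^N\phi(x_i)\Bigr)=-\frac{1}{N}\sum_{i=1}^N|\dot x_i|^2.
\]
Combined with the hypothesis \eqref{x^0_phi} and the coercivity $\phi(x)\geq c_0(1+|x|^2)$ in \eqref{phi}, this gives $\|X_N\|_{L^\infty(0,T;L^2(0,1))}+\|\partial_t X_N\|_{L^2((0,T)\times(0,1))}\leq C$. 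Since the interpolations satisfy $\partial_t X_N=-\phi'(X_N)-\partial_s\tilde\Lambda_N$ a.e.\ on $(0,T)\times(0,1)$, and $\phi'$ grows at most linearly (so $\phi'(X_N)$ inherits the $L^2$ bound), one concludes $\|\partial_s\tilde\Lambda_N\|_{L^2_{t,s}}\leq C$; the boundary conditions $\tilde\Lambda_N(t,0)=\tilde\Lambda_N(t,1)=0$ and Poincaré give $\|\tilde\Lambda_N\|_{L^2(0,T;H^1(0,1))}\leq C$.

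For compactness, the weak limit $\tilde\Lambda_N\rightharpoonup\Lambda$ in $L^2H^1$ is immediate. For the strong convergence of $X_N$, I would exploit that $s\mapsto X_N(t,s)$ is non-decreasing with uniformly bounded $L^2$ norm: Helly's selection theorem (applied along a countable dense set of times by diagonal extraction) gives pointwise a.e.\ convergence in $s$, and uniform $p$-integrability coming from the $L^2$ bound promotes this to $L^p$ convergence for $p<2$ via Vitali's theorem. The bound on $\partial_t X_N\in L^2_{t,s}$ provides uniform $C^{1/2}([0,T];L^2)$ equi-continuity in $N$, so an Arzelà--Ascoli argument in $C([0,T];L^p)$ upgrades the convergence to $X_N\to X$ in $C([0,T];L^p(0,1))$ for every $p\in[1,2)$. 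One then passes to the limit in $\partial_t X_N=-\phi'(X_N)-\partial_s\tilde\Lambda_N$ in $\mathcal{D}'$ using continuity of $\phi'$. The sign $\Lambda\geq 0$ and the boundary traces survive the weak limit; the measure inequality $\partial_s X\geq 1$ follows by testing $\partial_s X_N$ against non-negative $C^1_c$ functions (integration by parts plus strong $L^1$ convergence of $X_N$) together with the discrete non-overlap $x_{i+1}-x_i\geq 1/N$.

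The main obstacle is transferring the saturation condition to the limit, since $\partial_s X$ is only a measure and $\tilde\Lambda_N$ converges merely weakly. The key is a discrete saturation identity: combining $\lambda_i(x_{i+1}-x_i)=\lambda_i/N$ from \eqref{eq:slack_r} with the trapezoidal formula for $\int_0^1\tilde\Lambda_N\,ds$ (which collapses since $\lambda_0=\lambda_N=0$),
\[
\int_0^1\tilde\Lambda_N\,d(\partial_s X_N)=\sum_{i=1}^{N-1}\lambda_i(x_{i+1}-x_i)=\frac{1}{N}\sum_{i=1}^{N-1}\lambda_i=\int_0^1\tilde\Lambda_N\,ds.
\]
Integrating in time, rewriting the left side by integration by parts (the boundary terms vanish because $\tilde\Lambda_N$ does), and pairing the strong $L^2$ convergence of $X_N$ against the weak $L^2$ convergence of $\partial_s\tilde\Lambda_N$, one passes to the limit to obtain $\int_0^T\!\!\int_0^1\Lambda\,d(\partial_s X-ds)\,dt=0$. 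Since $\Lambda\geq 0$ and $\partial_s X-ds$ is a non-negative Radon measure, this forces $\Lambda(1-\partial_s X)=0$. Hence $(X,\Lambda)$ solves \eqref{ode:1}--\eqref{ode:2} with initial datum $X^0$ (using \eqref{eq:x^0_conv} together with the time continuity of $X$), and Theorem~\ref{thm:wp_micro} promotes the subsequential convergence to convergence of the full sequence.
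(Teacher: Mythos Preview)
Your overall architecture matches the paper's exactly: uniform estimates, compactness, passage to the limit in \eqref{eq:EL_sigma}, saturation via an integration-by-parts trick, then uniqueness. The energy identity you derive directly on the ODE is equivalent to what the paper obtains through the JKO construction (Lemma~\ref{lem:JKO_est}); the claim $\lambda_i(\dot x_{i+1}-\dot x_i)=0$ a.e.\ is correct but deserves a line of justification (on $\{x_{i+1}-x_i=1/N\}$ the $H^1$ function $x_{i+1}-x_i$ has a.e.\ vanishing derivative).

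There is, however, a genuine gap in your saturation step. You establish $X_N\to X$ strongly in $C([0,T];L^p)$ only for $p<2$ (your Helly/Vitali argument cannot do better: take $f_n=\sqrt{n}\,\chi_{[1-1/n,1]}$, monotone with $\|f_n\|_{L^2}=1$ but $f_n\not\to 0$ in $L^2$), yet you then invoke ``the strong $L^2$ convergence of $X_N$'' to pair against the merely weak $L^2$ limit of $\partial_s\tilde\Lambda_N$. With only $L^p$, $p<2$, for $X_N$ and weak $L^2$ for $\partial_s\tilde\Lambda_N$, the product $\int X_N\,\partial_s\tilde\Lambda_N$ does not pass to the limit.

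The missing ingredient is precisely hypothesis~\eqref{bv_endpts}, which you never use. The paper propagates it via the Gr\"onwall-type bound $\omega_i(t)\le \omega_i(0)e^{\|\phi''\|_\infty t}$ (Lemma~\ref{lem:omega_i_exp}, Proposition~\ref{prop:BV_est}) to get $x_N(t)-x_0(t)\le C$ uniformly in $N$; combined with the $L^2$ bound this yields a uniform $L^\infty_{t,s}$ bound on $X_N$ and $\tilde X_N$. With that in hand your Helly argument (or the paper's Aubin--Lions with $BV\hookrightarrow L^q$) upgrades to strong convergence in $C([0,T];L^q)$ for every $q<\infty$, and your exact discrete identity $\int_0^1\tilde\Lambda_N\,dX_N=\int_0^1\tilde\Lambda_N\,ds$ then passes to the limit cleanly. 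Without this bound I do not see how to close the argument; the BV control is not cosmetic but essential to the saturation step.
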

In Eulerian coordinates, \eqref{x^0_phi} reads $\sup_{N\in\N}\int_\R \phi(x) \, \rho_N^0(\mathrm{d}x)<+\infty$ which means that the potential energy in the system is uniformly bounded in $N$. It is thus essential to derive several bounds on $X_N$ and $\tilde \Lambda_N$ (see Proposition \ref{prop:sigma_cpct_1}). The bound on the density support \eqref{bv_endpts} will imply a BV bound that will be crucial in passing to the limit in the nonlinear term $\phi'(X_N)$ (see Proposition \ref{prop:BV_est}).
				
				


\medskip

Our last theorem will  show the convergence result for the density distribution:
\begin{theorem}[Convergence in Eulerian coordinates]
	\label{thm:q2}
	Under the assumptions of Theorem~\ref{thm:q1}, let  $\rho_N(t)$ be the empirical measure associated to $\mathbf{X}(t)=(x_1(t),\dots, x_N(t))$ by \eqref{eq:empirical} and let $p_N$ be  defined by
	\begin{equation}
	p_N(t,x) 	:= \sum_{i=0}^{N-1}\lambda_i(t)\chi_{[x_i(t),x_{i+1}(t))}(x) . \label{eq:disc_press_const}
\end{equation}
	Then, as $N\to\infty$, we have
	 $\rho_N(t)\to \rho(t)$ narrowly uniformly in $t\in[0,T]$  and
  $p_N\rightharpoonup p $  in $L^2((0,T)\times \R)$,
	where $(\rho,p)$ is the unique weak solution  to~\eqref{eq:macro}  with initial condition $ \rho^0:=X^0(\cdot)_\# (\mathrm{d}s )$.
\end{theorem}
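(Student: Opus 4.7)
The idea is to leverage the Lagrangian convergence from Theorem~\ref{thm:q1} and convert it into Eulerian statements via push-forward and a change of variables, exploiting the discrete complementary slackness \eqref{eq:slack_r} to handle the pressure term.

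For the narrow convergence of $\rho_N$, I first observe that $\rho_N(t)=X_N(t,\cdot)_\#\mathrm{d}s$ and $\rho(t)=X(t,\cdot)_\#\mathrm{d}s$. For any bounded Lipschitz function $f:\R\to\R$, the push-forward formula yields
\[
\left|\int_\R f\,\mathrm{d}\rho_N(t)-\int_\R f\,\mathrm{d}\rho(t)\right|\leq \mathrm{Lip}(f)\,\|X_N(t,\cdot)-X(t,\cdot)\|_{L^1(0,1)},
\]
so the $C([0,T];L^p)$-convergence from Theorem~\ref{thm:q1} gives convergence in the bounded Lipschitz metric uniformly in $t\in[0,T]$. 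The upgrade to narrow convergence against all $C_b$ test functions follows from tightness, which I obtain from the uniform second-moment estimate $\int\phi\,\mathrm{d}\rho_N(t)\leq C$ inherited from \eqref{x^0_phi} and the coercivity in \eqref{phi}.

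For the pressure, a direct computation using the implication $\lambda_i(t)>0\Rightarrow x_{i+1}(t)-x_i(t)=1/N$ from \eqref{eq:slack_r} gives
\[
\|p_N(t)\|_{L^2(\R)}^2=\sum_i\lambda_i(t)^2(x_{i+1}(t)-x_i(t))=\frac{1}{N}\sum_i\lambda_i(t)^2,
\]
which is comparable (up to a constant from the midpoint average of a linear function squared) to $\|\tilde{\Lambda}_N(t)\|_{L^2(0,1)}^2$. The $L^2(0,T;H^1(0,1))$ bound on $\tilde{\Lambda}_N$ from Theorem~\ref{thm:q1} therefore yields a uniform bound on $p_N$ in $L^2((0,T)\times\R)$, and I extract a subsequence converging weakly to some $\bar p$.

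The key remaining task is to identify $\bar p=p$. Let $\widehat X_N(t,\cdot)$ be the continuous piecewise linear interpolation of the nodal values $x_i(t)$, and $\widehat\Lambda_N(t,\cdot)$ the piecewise constant function equal to $\lambda_i(t)$ on $(i/N,(i+1)/N)$. Then $(\widehat X_N)_\#\mathrm{d}s=\tilde\rho_N\,\mathrm{d}x$, and the discrete saturation $p_N(1-\tilde\rho_N)=0$ forces $p_N\tilde\rho_N=p_N$, so for any $g\in C_c^\infty((0,T)\times\R)$ the change of variables $y=\widehat X_N(t,s)$ yields
\[
\int_0^T\!\!\int_\R p_N g\,\mathrm{d}x\,\mathrm{d}t=\int_0^T\!\!\int_\R p_N g\,\tilde\rho_N\,\mathrm{d}x\,\mathrm{d}t=\int_0^T\!\!\int_0^1\widehat\Lambda_N(t,s)\,g(t,\widehat X_N(t,s))\,\mathrm{d}s\,\mathrm{d}t.
\]
Standard estimates give $\|\widehat\Lambda_N-\tilde\Lambda_N\|_{L^2}\lesssim N^{-1}\|\partial_s\tilde\Lambda_N\|_{L^2}$ and, using \eqref{bv_endpts} to control $\sum(x_{i+1}-x_i)^2\lesssim L^2/N$, $\|\widehat X_N-X_N\|_{L^2}\to 0$. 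Hence $\widehat\Lambda_N\rightharpoonup\Lambda$ weakly in $L^2$ while $g\circ\widehat X_N\to g\circ X$ strongly in $L^2$; passing to the limit and using the identities $\Lambda=p\circ X$, $\rho=X_\#\mathrm{d}s$, together with the continuum saturation $p\rho=p$, yields
\[
\int_0^T\!\!\int_\R p_N g\,\mathrm{d}x\,\mathrm{d}t\;\longrightarrow\;\int_0^T\!\!\int_0^1\Lambda\,g(X)\,\mathrm{d}s\,\mathrm{d}t=\int_0^T\!\!\int_\R p\,g\,\mathrm{d}x\,\mathrm{d}t,
\]
identifying $\bar p=p$, and uniqueness in Theorem~\ref{thm:wp_micro} promotes subsequential convergence to convergence of the full sequence. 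The main obstacle is precisely the identification step: both $\tilde\rho_N$ and $p_N$ converge only weakly, so \emph{a priori} one cannot pass to the limit in their product; the algebraic trick $p_N\tilde\rho_N=p_N$ inserts $\tilde\rho_N$ for free and reduces the problematic weak-times-weak limit to a clean weak-times-strong pairing in Lagrangian coordinates.
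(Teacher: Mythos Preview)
Your approach is correct and takes a genuinely different route from the paper. The paper works entirely in Eulerian coordinates: it introduces the auxiliary interpolants $\tilde\rho_N$ and $\tilde p_N$, derives a discrete weak formulation $\partial_t\rho_N=\partial_x(\rho_N\partial_x\phi)+\partial_{xx}p_N$ together with $p_N(1-\tilde\rho_N)=0$, obtains compactness from $W_2$-equicontinuity and $L^2$ bounds, and then passes to the limit in both the PDE and the saturation relation (the latter via a weak--strong pairing using $\tilde p_N\in L^2_tH^1_x$). The limit pair is identified as the unique weak solution by appealing to the uniqueness result of~\cite{DMM16}. You instead read the Eulerian convergence directly off the Lagrangian one: the narrow convergence of $\rho_N$ is pulled back to the $C_tL^1_s$ convergence of $X_N$, and the identification of the pressure limit is done by the change-of-variables identity $\int p_N g=\int p_N g\,\tilde\rho_N=\int \widehat\Lambda_N\,g(\widehat X_N)$, after which the weak--strong product passes to the limit in Lagrangian variables. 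Your route is shorter and makes transparent that Theorem~\ref{thm:q2} is essentially a corollary of Theorems~\ref{thm:q1} and~\ref{thm:wp_micro}; the paper's route, on the other hand, re-derives the Eulerian PDE in the limit and thus makes Section~\ref{sec:q2} logically independent of the Lagrangian identification step in Theorem~\ref{thm:wp_micro}.

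One small technical point: your claimed estimate $\sum_i(x_{i+1}-x_i)^2\lesssim L^2/N$ does not follow from~\eqref{bv_endpts} alone (a single large gap of order one is compatible with the hypotheses), so the asserted $L^2$ convergence $\|\widehat X_N-X_N\|_{L^2}\to0$ is not justified as stated. This is harmless for your argument: since $g$ is bounded, you only need $g(\widehat X_N)\to g(X)$ strongly in $L^2_{t,s}$, and for that the $L^1_s$ convergence $\|\widehat X_N-X\|_{C_tL^1_s}\to0$ (which follows from $\|\widehat X_N-X_N\|_{L^1_s}=\frac{1}{2N}(x_N-x_0)$ and Theorem~\ref{thm:q1}) already suffices by interpolation with the uniform $L^\infty$ bound on $g$.
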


\medskip

	\begin{figure}[H]
		\centering
		\begin{tikzpicture}
			\node[draw, align = center] at (-4,2) {$(\mathbf{X}_N,\mathbf{\Lambda}_N)$ solve\\\eqref{eq:EL_r} and~\eqref{eq:slack_r}};
			\node[draw, align = center] at (4,2) {$(X,\Lambda)$ solve\\\eqref{ode:1} and~\eqref{ode:2}};
			\node[draw, align = center] at (-4,-2) {$\rho_N$, $\tilde \rho_N$ and $p_N$ \\ solve \eqref{wk_sat_disc}  and \eqref{eq:wk_test_cty_disc}};
			\node[draw, align = center] at (4,-2) {$(\rho,p)$ solve~\eqref{eq:macro}};
			\draw[->] (-2.5,2) -- (2.7,2) node[pos=0.5, above, align = center] {$N\to \infty$\\\Cref{thm:q1}};
			\draw[->] (-2.2,-2) -- (2.6,-2) node[pos=0.5, below, align = center] {$N\to \infty$\\\Cref{thm:q2}};
			\draw[->] (-4, 1.3) -- (-4,-1.3) node[pos=0.5, left, align = center] {\Cref{prop:disc_wk_macro} };
			\draw[->] (4, 1.3) -- (4,-1.3) node[pos=0.5, right, align = center] {\Cref{thm:wp_micro}};
			\draw[->, color = Cerulean] (-2.5,1.5) -- (3.5,-1.5) node[pos=0.5, above right, align = center] {Eulerian};
			\draw[->, color = ForestGreen] (-3,1.4) -- (3,-1.6) node[pos=0.5, below left, align = center] {Lagrangian};
		\end{tikzpicture}
		\caption{Summary of the main results.}
		\label{fig:qs}
	\end{figure}
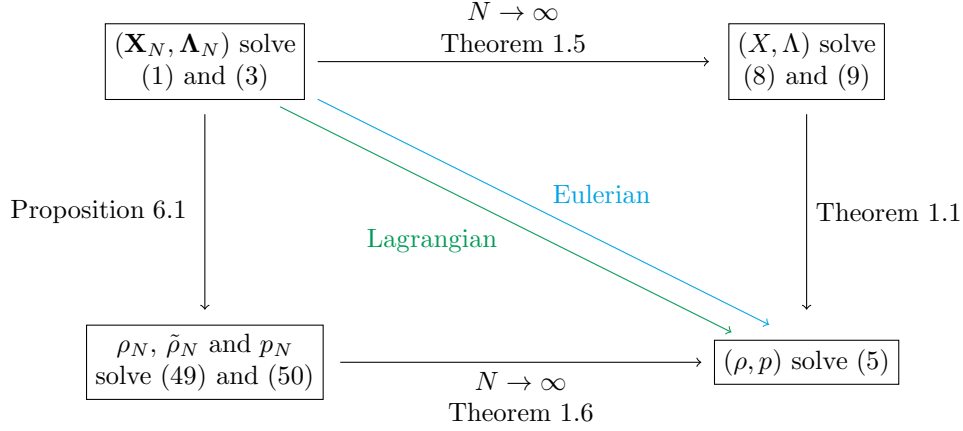

We have organised the paper in the following way:  \Cref{sec:micro} is dedicated to the proof of~\Cref{prop:exist} with an explicit construction using the JKO scheme~\cite{JKO98,S17}. \Cref{sec:N_to_inf}, the main section of the paper, settles crucial estimates that are uniform in $N$. Independent (but nevertheless crucial) to the $N\to \infty$ limit, \Cref{sec:uniqueness} establishes uniqueness of solutions to~\eqref{ode:1}--\eqref{ode:2}. These results are essential to the proofs of~\Cref{thm:wp_micro,thm:q1} (resp.~\Cref{thm:q2}) which are in~\Cref{sec:q1} (resp.~\Cref{sec:q2}).

\begin{remark}
	\label{rem:interaction}
We believe our technique also extends to non-local models of congestion. For example, if $W\in C_b^2$ is a symmetric interaction kernel, we can modify~\eqref{eq:EL_r}, \eqref{eq:macro}, and~\eqref{ode:1} to the following
\begin{align*}
	\dot{x}_i(t) &= -\frac{1}{N}\sum_{j=1}^NW'(x_i(t) - x_j(t)) - \phi'(x_i(t))-N(\lambda_i - \lambda_{i-1}),	\\
	\partial_t X &= - \int_0^1W'(X(t,s) - X(t,\sigma))\,\mathrm{d}\sigma - \phi'(X) - \partial_s \Lambda, \text{ and}	\\
	\partial_t\rho &= - \partial_x(\rho\partial_x(W*\rho + \phi + p)).
\end{align*}
\end{remark}

\Cref{sec:sampling} details a particular construction of the initial conditions for the particle system from the initial density $\rho^0$. \Cref{sec:prelim} records preliminary facts on Wasserstein distances used in the paper.

	\section{The microscopic model}
	\label{sec:micro}
	In this section we prove Proposition \ref{prop:exist}, which yields the existence of a unique solution to the microscopic model \eqref{eq:EL_r}--\eqref{eq:lambdabc}.
While this result holds for any $r>0$ and $N\in \N$, we will take $2r=\frac 1 N$ in the proof below to be consistent with the rest of the paper where this choice of $r$ is crucial.
In particular this allows a clear exposition to show how the estimates derived in this section will be used to pass to the limit $N\to\infty$ later. 
\medskip
	
We define the set of admissible particle positions  (when  $2r = \frac{1}{N}$) by
\[
\mathcal{K}_N := \left\{
\mathbf{X}=(x_1,\dots, x_N)\in \R^N \, \left| \, x_{i+1}- x_i \geq  \frac{1}{N}\right., \quad \forall i=1,\dots, N-1
\right\},
\]
and for $\mathbf{X} \in \mathcal{K}_N$, we define the set of admissible velocities as
\[
C(\mathbf{X}):= \left\{
\mathbf{V} = (v_1,\dots, v_N)\in \R^N \, \left| \, \text{if }x_{i+1} - x_i =\frac{1}{N}\right. \text{ for some }i=1,\dots, N-1, \text{ then }v_{i+1}\ge v_i
\right\}.
\]
Given $\mathbf{X}\in \mathcal{K}_N,$ the map $$\mathbb{P}_{C(\mathbf{X})}:\R^N \to C(\mathbf{X})$$ will denote the (Euclidean) projection onto $C(\mathbf{X})$.
	Our goal in this section is to construct solutions to the following system of ODEs:
	\begin{equation}
		\label{eq:micro}
		\dot{\mathbf{X}}(t) = \mathbb{P}_{C(\mathbf{X}(t))}(-\Phi'(\mathbf{X}(t))), \quad t\in(0,T), \quad  \mathbf{X}(0) = \mathbf{X}^0 \in \mathcal{K}_N,
	\end{equation}
and show that it is equivalent to \eqref{eq:EL_r}--\eqref{eq:lambdabc}
(we used the shorthand notation $\Phi'(\mathbf{X}) = (\phi'(x_1), \dots, \phi'(x_N))$). As mentioned before, this result is not new: the general version is available in~\cite{MV11}.
A constructive approach is taken here, which will yield some crucial estimates for passing to the limit $N\to \infty$ in~\Cref{sec:N_to_inf}.
	\medskip
	
We use the classical variational minimizing movement / JKO scheme~\cite{JKO98,AGS08,S17} to construct solutions to~\eqref{eq:micro} Given a time step $\tau>0$, 
	we iterate the following optimization problem:
	\begin{equation}
		\label{eq:JKO_micro}
		\mathbf{X}^0 \in \mathcal{K}_N, \quad \mathbf{X}^{k+1} \in \text{argmin}_{\mathbf{X}\in \mathcal{K}_N}\left\{
		\frac{1}{N}\sum_{i=1}^N \phi(x_i) + \frac{|\mathbf{X}-\mathbf{X}^k|^2}{2N\tau}
		\right\}, \quad k \in \N. 
	\end{equation}
	Owing to \eqref{phi} and the convexity of $\mathcal{K}_N$, there is a unique $\mathbf{X}^{k+1}$ for each $k\in\N$, if $\tau$ is sufficiently small.
	The Euler-Lagrange equation is given with the Lagrange multipliers $\mathbf{\Lambda}^{k+1} = (\lambda_1^{k+1}, \dots, \lambda_{N-1}^{k+1})\in \R^{N-1}$:
	\begin{equation}
		\label{eq:EL_micro}
		\left\{
		\begin{array}{rl}
			\frac{1}{N}\phi'(x_i^{k+1}) + \frac{1}{N}\left(\frac{x_i^{k+1}-x_i^k}{\tau}\right) + \lambda_i^{k+1} - \lambda_{i-1}^{k+1} = 0,  &\forall i=1,\dots, N,	\\
			\text{ where }\lambda_0^{k+1} :=0\text{ and } \lambda_N^{k+1} := 0, 
		\end{array}
		\right. \quad \forall k\geq 0.
	\end{equation}
	This is paired with the complementary slack conditions
	\begin{equation}
		\label{eq:slack_micro}
		\lambda_i^{k+1}\left(-x_{i+1}^{k+1} + x_i^{k+1} + \frac{1}{N}\right) = 0, \quad x_{i+1}^{k+1} - x_i^{k+1}\geq \frac{1}{N}, \quad \lambda_i^{k+1}\ge 0,\quad \forall i=1,\dots, N-1, \quad \forall k\geq 0 .
	\end{equation}
		Our aim is to construct a solution of \eqref{eq:micro} in the limit $\tau\to0$.
		 We begin with some immediate and important estimates from ~\eqref{eq:JKO_micro}.
	\begin{lemma}
		\label{lem:JKO_est}
		Let $(x_i^k)$ and $(\lambda_i^{k+1})$ be described as above with $\phi$ satisfying \eqref{phi}. For any $T=\tau K$ with $K\in \N$, the following estimates hold where $\bar{\phi}_N := \frac{1}{N}\sum_{i=1}^N\phi(x_i^0)$:
		\begin{align}
				\label{eq:utsnx}
				\sup_{k\geq 0}\left(\frac{1}{N}\sum_{i=1}^N |x_i^{k+1}|^2\right) &\le  \frac{1}{c_0}\bar{\phi}_N, 	\\
				\label{eq:stnxg}
				\frac{\tau}{N}\sum_{k=0}^{K-1}\sum_{i=1}^N \left(\frac{x_i^{k+1}-x_i^k}{\tau}\right)^2 &\le 2 \bar{\phi}_N, 	\\
				\label{eq:stnpg}
				\frac{\tau}{N}\sum_{k=0}^{K-1}\sum_{i=1}^N \left|N(\lambda_i^{k+1} - \lambda_{i-1}^{k+1})\right|^2 &\le 4c_2^2T \left(1 + \frac{\bar{\phi}_N}{c_0}\right) + 4\bar{\phi}_N,\text{ and }\\
				\label{eq:stnp}
				\frac{\tau}{N}\sum_{k=0}^{K-1}\sum_{i=1}^N |\lambda_i^{k+1}|^2 &\le 4c_2^2  T \left(1 + \frac{\bar{\phi}_N}{c_0}\right) + 4\bar{\phi}_N.
		\end{align}
	\end{lemma}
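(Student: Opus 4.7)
\textbf{Proof plan for Lemma \ref{lem:JKO_est}.} The whole lemma follows from exploiting that $\mathbf{X}^{k+1}$ minimizes the JKO functional and using $\mathbf{X}^k\in \mathcal{K}_N$ as a competitor, together with the Euler–Lagrange system \eqref{eq:EL_micro} to pass from $\dot x$--bounds to $\lambda$--bounds. First I would record the fundamental one-step energy inequality: comparing the value of the functional at $\mathbf{X}^{k+1}$ with its value at $\mathbf{X}^k$ yields
\begin{equation*}
\frac{1}{N}\sum_{i=1}^N \phi(x_i^{k+1}) + \frac{|\mathbf{X}^{k+1}-\mathbf{X}^k|^2}{2N\tau}\;\le\; \frac{1}{N}\sum_{i=1}^N \phi(x_i^{k}).
\end{equation*}
Iterating (telescoping in $k$) gives
\begin{equation*}
\frac{1}{N}\sum_{i=1}^N \phi(x_i^{K}) + \sum_{k=0}^{K-1}\frac{|\mathbf{X}^{k+1}-\mathbf{X}^k|^2}{2N\tau}\;\le\; \bar{\phi}_N.
\end{equation*}
Combined with the coercivity hypothesis $\phi(x)\ge c_0(1+|x|^2)$ from \eqref{phi}, the first term controls $\tfrac{c_0}{N}\sum_i|x_i^{k+1}|^2$, which is \eqref{eq:utsnx}; the second term, after multiplication by $2\tau$, is exactly \eqref{eq:stnxg}.

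Next, to obtain \eqref{eq:stnpg} I would invoke \eqref{eq:EL_micro} in the form
\begin{equation*}
N(\lambda_i^{k+1}-\lambda_{i-1}^{k+1})=-\phi'(x_i^{k+1})-\frac{x_i^{k+1}-x_i^k}{\tau},
\end{equation*}
square, use $(a+b)^2\le 2a^2+2b^2$, and sum over $i$ and $k$. The contribution of $(x_i^{k+1}-x_i^k)/\tau$ is bounded by $4\bar{\phi}_N$ thanks to \eqref{eq:stnxg}. For the contribution of $\phi'$, I would use the bound $|\phi'(x)|^2 \le c_2^2(1+|x|^2)$ that follows from $|\phi''|\le c_2$ together with the growth/positivity of $\phi$ (a short calculus lemma showing $|\phi'(0)|\le c_2$ can be inserted, or the constant can be absorbed), then apply \eqref{eq:utsnx} at each time level. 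Summed over $k=0,\dots,K-1$ this yields a factor $T=\tau K$ and gives $2c_2^2 T(1+\bar{\phi}_N/c_0)$. Doubling to match the $(a+b)^2$ splitting produces precisely the right-hand side of \eqref{eq:stnpg}.

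Finally, \eqref{eq:stnp} is a discrete Poincaré-type inequality: since $\lambda_N^{k+1}=0$, we write $\lambda_i^{k+1}=-\frac{1}{N}\sum_{j=i+1}^{N} N(\lambda_j^{k+1}-\lambda_{j-1}^{k+1})$ and apply Cauchy–Schwarz to get
\begin{equation*}
|\lambda_i^{k+1}|^2 \;\le\; \frac{N-i}{N^2}\sum_{j=i+1}^{N}\bigl|N(\lambda_j^{k+1}-\lambda_{j-1}^{k+1})\bigr|^2 \;\le\; \frac{1}{N}\sum_{j=1}^{N}\bigl|N(\lambda_j^{k+1}-\lambda_{j-1}^{k+1})\bigr|^2.
\end{equation*}
Averaging in $i$ shows $\tfrac{1}{N}\sum_i|\lambda_i^{k+1}|^2\le \tfrac{1}{N}\sum_j|N(\lambda_j^{k+1}-\lambda_{j-1}^{k+1})|^2$, and summing in $k$ reduces \eqref{eq:stnp} to \eqref{eq:stnpg}.

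The only delicate point is the handling of the $\phi'$ term in estimate \eqref{eq:stnpg}, where one must trade the pointwise bound on $|\phi'|^2$ against the uniform $L^2$ control on $x_i^{k+1}$ coming from the potential energy; the boundary condition $\lambda_0^{k+1}=\lambda_N^{k+1}=0$ makes the passage from \eqref{eq:stnpg} to \eqref{eq:stnp} entirely routine.
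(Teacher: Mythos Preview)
Your proof is correct and follows essentially the same route as the paper: the one-step JKO comparison gives \eqref{eq:utsnx} and \eqref{eq:stnxg}, the Euler--Lagrange identity squared gives \eqref{eq:stnpg}, and a telescoping/Cauchy--Schwarz argument using $\lambda_0^{k+1}=\lambda_N^{k+1}=0$ gives \eqref{eq:stnp}. The one minor difference is that for \eqref{eq:stnp} you reduce directly to \eqref{eq:stnpg} via the discrete Poincar\'e inequality, whereas the paper instead substitutes the Euler--Lagrange expression for $\lambda_j^{k+1}-\lambda_{j-1}^{k+1}$ and re-estimates; your reduction is slightly cleaner but yields the same bound. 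Both the paper and your sketch tacitly use $|\phi'(x)|\le c_2(1+|x|)$, which strictly requires $|\phi'(0)|\le c_2$; as you note, this is harmless since any constant here can be absorbed without affecting the structure of the estimates.
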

	\begin{remark}
		All the bounds in~\Cref{lem:JKO_est} depend on $N$ only through the quantity $\bar{\phi}_N = \frac{1}{N}\sum_{i=1}^N\phi(x_i^0)$. Condition \eqref{x^0_phi} will thus be crucial in the $N\to \infty$ limit.
	\end{remark}
	\begin{proof}
		To prove~\eqref{eq:utsnx}, we fix $k\geq 0$ and take $\mathbf{X}^k = (x_1^k, \dots, x_N^k)$ as a competitor in~\eqref{eq:JKO_micro} to get
		\begin{equation}
			\label{eq:JKO_comp}
			\frac{1}{N}\sum_{i=1}^N\phi(x_i^{k+1}) + \frac{|\mathbf{X}^{k+1}-\mathbf{X}^k|^2}{2N\tau}\le \frac{1}{N}\sum_{i=1}^N\phi(x_i^k).
		\end{equation}
		We easily deduce from~\eqref{eq:JKO_comp} that
		\[
		\frac{1}{N}\sum_{i=1}^N\phi(x_i^{k+1}) \le \frac{1}{N}\sum_{i=1}^N\phi(x_i^0) = \bar{\phi}_N.
		\]
		Using the fact that $\phi$ satisfies~\eqref{phi}, we obtain~\eqref{eq:utsnx} since
		\[
		\frac{1}{N}\sum_{i=1}^N |x_i^{k+1}|^2 \le \frac{1}{c_0N}\sum_{i=1}^N\phi(x_i^{k+1}) - \frac{1}{c_0} \le \frac{1}{c_0}\bar{\phi}_N,\quad \forall k=0,1,\dots, K-1.
		\]
		To prove~\eqref{eq:stnxg}, we return to~\eqref{eq:JKO_comp} and sum over all $k$ from $0$ to $K-1$ to get
		\[
		\frac{\tau}{2N}\sum_{k=0}^{K-1}\left(\frac{|\mathbf{X}^{k+1}-\mathbf{X}^k|}{\tau}\right)^2 \le \frac{1}{N}\sum_{i=1}^N(\phi (x_i^0) - \phi(x_i^{K})).
		\]
		Since  $-\phi(x_i^{K})\le -c_0< 0$ from~\eqref{phi}, we conclude ~\eqref{eq:stnxg}.
		
To prove~\eqref{eq:stnpg}, note that, for any $i=1,\dots, N$ and $k=0,1,\dots, K-1$, the Euler-Lagrange equation and~\eqref{phi} give
		\[
		\left|N(\lambda_i^{k+1}-\lambda_{i-1}^{k+1})\right| =  \left|\phi'(x_i^{k+1}) + \frac{x_i^{k+1}-x_i^k}{\tau}\right| \le c_2 + c_2|x_i^{k+1}| + \frac{|x_i^{k+1}-x_i^k|}{\tau}.
		\]
		Using Young's inequality, multiplying by $\frac{1}{N}$, and summing over $i=1,\dots, N$ gives
		\begin{align*}
			\frac{1}{N}\sum_{i=1}^N \left|N(\lambda_i^{k+1} - \lambda_{i-1}^{k+1})\right|^2 &\le 4c_2^2 + 4c_2^2\left(\frac{1}{N}\sum_{i=1}^N|x_i^{k+1}|^2\right) + \frac{2}{N}\sum_{i=1}^N\left(\frac{x_i^{k+1}-x_i^k}{\tau}\right)^2 	\\
			&\le 4c_2^2 + \frac{4c_2^2}{c_0}\bar{\phi}_N + \frac{2}{N}\sum_{i=1}^N\left(\frac{x_i^{k+1}-x_i^k}{\tau}\right)^2.
		\end{align*}
		In the last inequality, we used~\eqref{eq:utsnx} which was previously established in this proof. Multiplying this inequality by $\tau$, summing over all $k=0,1,\dots, K-1$ (recall $T = \tau K$), and using~\eqref{eq:stnxg} yields~\eqref{eq:stnpg}.
		
		To prove~\eqref{eq:stnp}, whenever $j=1,\dots, N$ and $k\geq 0$, we write the Euler-Lagrange equation~\eqref{eq:EL_micro} with the variable $j$ replacing $i$ to get
		\[
		\lambda_j^{k+1} - \lambda_{j-1}^{k+1} = -\frac{1}{N}\left(\phi'(x_j^{k+1}) + \frac{x_j^{k+1}-x_j^k}{\tau}\right),\quad \lambda_0^{k+1} = 0.
		\]
		For any fixed $i=1,\dots, N$, we can write $\lambda_i^{k+1}$ as the (telescopic) sum of the equation above from $j=1$ to $j=i$ to get
		\[
		\lambda_i^{k+1} = \lambda_i^{k+1} - \lambda_0^{k+1} =  \sum_{j=1}^i\left(\lambda_j^{k+1} - \lambda_{j-1}^{k+1}\right) = -\frac{1}{N}\left(\left(\sum_{j=1}^i \phi'(x_j^{k+1})\right) + \left(\sum_{j=1}^i\frac{x_j^{k+1}-x_j^k}{\tau}\right)\right).
		\]
		Using Young's and the Cauchy-Schwarz inequalities, we obtain
		\begin{align*}
			&\quad \left|\lambda_i^{k+1}\right|^2 \le \frac{2}{N^2}\left(\left(\sum_{j=1}^i \phi'(x_j^{k+1})\right)^2 +  \left(\sum_{j=1}^i\frac{x_j^{k+1}-x_j^k}{\tau}\right)^2 \right) 	\\
			&\le \frac{2i}{N^2}\sum_{j=1}^i \left(\left|\phi'(x_j^{k+1})\right|^2 + \left|\frac{x_j^{k+1}-x_j^k}{\tau}\right|^2 \right) \le \frac{2}{N}\sum_{j=1}^N \left(\left|\phi'(x_j^{k+1})\right|^2 + \left|\frac{x_j^{k+1}-x_j^k}{\tau}\right|^2 \right).
		\end{align*}
This inequality holds for all $i$ and implies:
		\begin{align*}
			&\quad \frac{\tau}{N}\sum_{k=0}^{K-1}\sum_{i=1}^N \left|\lambda_i^{k+1}\right|^2 \le \frac{2\tau}{N}\sum_{k=0}^{K-1}\sum_{i=1}^N\left|\phi'(x_i^{k+1})\right|^2 + \frac{2\tau}{N}\sum_{k=0}^{K-1}\sum_{i=1}^N\left|\frac{x_i^{k+1}-x_i^k}{\tau}\right|^2 	\\
			&\le \frac{2\tau}{N}\left(\sum_{k=0}^{K-1}\sum_{i=1}^Nc_2^2\left(1 + \left|x_i^{k+1}\right| \right)^2\right) + 4\bar{\phi}_N \le 4c_2^2T + \left(4c_2^2\tau \sum_{k=0}^{K-1}\frac{\bar{\phi}_N}{c_0}\right) + 4 \bar{\phi}_N = 4c_2^2 T + \frac{4c_2^2T\bar{\phi}_N}{c_0} + 4\bar{\phi}_N
		\end{align*}
where we used~\eqref{phi},~\eqref{eq:stnxg}, and~\eqref{eq:utsnx}.
%
	\end{proof}
	\subsection{Continuous time limit $\tau \to 0$}
	\label{sec:tau_to_zero}
	We now seek to pass to the limit $\tau\to 0$ and prove the existence part of Proposition \ref{prop:exist}.
	Given 
	\[
	\mathbf{X}^k = (x_1^k,x_2^k, \dots, x_N^k)\in \R^N\quad\text{and}\quad \mathbf{\Lambda}^k = (\lambda_0^k, \dots, \lambda_N^k)\in \R^{N+1}
	\]
	solutions of \eqref{eq:EL_micro} and~\eqref{eq:slack_micro}, we define the following interpolations:
	\[
	\left\{
	\begin{array}{ll}
		\mathbf{X}^\tau(t) := \mathbf{X}^{k+1}, 	&\mathbf{X}^\tau(0) := \mathbf{X}^0,\\
		\tilde{\mathbf{X}}^\tau(t) := \frac{\mathbf{X}^{k+1}-\mathbf{X}^k}{\tau}(t - k\tau) + \mathbf{X}^k, 	&\tilde{\mathbf{X}}^\tau(0) := \mathbf{X}^0,\\
		\mathbf{\Lambda}^\tau(t):= \mathbf{\Lambda}^{k+1}, 
	\end{array}
	\right. \quad t\in I_k:=(k\tau, (k+1)\tau], \quad k=0,1,\dots, K-1.
	\]
	We then have the following bounds:
	\begin{lemma}[Compactness in $\tau$]
		\label{lem:tau_cpct}
		For $\phi$ satisfying \eqref{phi}, there is a constant $C = C(N, \bar{\phi}_N, \, c_0, c_2) = \mathcal{O}(N\bar{\phi}_N)$ such that 
		\begin{equation}
			\label{eq:tau_cpct}
			\left\||\mathbf{X}^\tau|^2 + |\tilde{\mathbf{X}}^\tau|^2 \right\|_{L^\infty(0,T)} + \left\|\tilde{\mathbf{X}}^\tau \right\|_{H^1(0,T)}^2 + \left\| \mathbf{\Lambda}^\tau\right\|_{L^2(0,T)}^2 + \left\|\Delta\lambda^{\tau} \right\|_{L^2(0,T)}^2 \le C \quad\hbox{ for all } \tau>0,
		\end{equation}
where $(\Delta\lambda^{\tau}(t))^2:=\sum_{i=1}^N\left|N(\lambda_i^\tau - \lambda_{i-1}^\tau)\right|^2$ for $t\in I_k$. As a consequence, there exists $\mathbf{X}_N = (x_1,\dots, x_N)\in H^1(0,T; \, \R^N)$ and $\mathbf{\Lambda}_N = (\lambda_1,\dots, \lambda_N) \in L^2\left(0,T; \, \R^N\right)$ such that, up to a subsequence, 
		\[
		\mathbf{X}^\tau\text{ and }\tilde{\mathbf{X}}^\tau \to \mathbf{X}_N \text{ in }L^2(0,T; \, \R^N)\quad \text{and}\quad 
		\left\{
		\begin{array}{c}
			\frac{d\tilde{\mathbf{X}}^\tau}{dt} \rightharpoonup \frac{d\mathbf{X}_N}{dt} 	\\
			\mathbf{\Lambda}^\tau \rightharpoonup \mathbf{\Lambda}_N
		\end{array}
		\right.\text{ in }L^2(0,T; \, \R^N)\text{ as }\tau\to 0.
		\]
	\end{lemma}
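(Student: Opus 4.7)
The strategy is purely bookkeeping: each one of the four bounds asserted in~\eqref{eq:tau_cpct} is obtained by translating, line by line, the corresponding discrete-time estimate of~\Cref{lem:JKO_est} into the language of piecewise constant/linear interpolations; once all the uniform bounds are in hand, standard weak and strong compactness in the fixed-dimensional space $\R^N$ produces the limits $\mathbf{X}_N$ and $\mathbf{\Lambda}_N$.

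\textbf{Step 1: continuous-time versions of the JKO estimates.} On each interval $I_k$ the function $\mathbf{X}^\tau$ is the constant $\mathbf{X}^{k+1}$, so $|\mathbf{X}^\tau(t)|^2 = \sum_{i=1}^N |x_i^{k+1}|^2 \le \frac{N}{c_0}\bar\phi_N$ by~\eqref{eq:utsnx}; the piecewise linear $\tilde{\mathbf{X}}^\tau$ inherits the same $L^\infty_t$ bound by convexity of $|\cdot|^2$. Since $\tfrac{d}{dt}\tilde{\mathbf{X}}^\tau = \tfrac{\mathbf{X}^{k+1}-\mathbf{X}^k}{\tau}$ on $I_k$, I would integrate to get $\|\tfrac{d}{dt}\tilde{\mathbf{X}}^\tau\|_{L^2(0,T)}^2 = \tau\sum_k \bigl|\tfrac{\mathbf{X}^{k+1}-\mathbf{X}^k}{\tau}\bigr|^2 \le 2N\bar\phi_N$ thanks to~\eqref{eq:stnxg}; together with the $L^\infty_t$ bound this yields the $H^1$ estimate on $\tilde{\mathbf{X}}^\tau$. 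Identical manipulations (recognizing $\int_0^T\cdot\,dt$ as $\tau\sum_k$ on piecewise constant data) turn~\eqref{eq:stnp} into the $L^2_t$ bound on $\mathbf{\Lambda}^\tau$ and~\eqref{eq:stnpg} into the $L^2_t$ bound on $\Delta\lambda^\tau$, both with constants of order $N\bar\phi_N + Nc_2^2 T(1+\bar\phi_N/c_0)$, which fixes the shape of the constant $C$ announced in the statement.

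\textbf{Step 2: extracting the limits.} With the $H^1(0,T;\R^N)$ bound on $\tilde{\mathbf{X}}^\tau$ and the $L^2(0,T;\R^N)$ bounds on $\mathbf{\Lambda}^\tau$ and $\Delta\lambda^\tau$ in hand, Banach--Alaoglu provides a subsequence (not relabeled) along which $\tilde{\mathbf{X}}^\tau \rightharpoonup \mathbf{X}_N$ in $H^1(0,T;\R^N)$ and $\mathbf{\Lambda}^\tau \rightharpoonup \mathbf{\Lambda}_N$ in $L^2(0,T;\R^N)$ for some $\mathbf{X}_N$, $\mathbf{\Lambda}_N$. Because $N$ is fixed, the embedding $H^1(0,T;\R^N) \hookrightarrow C([0,T];\R^N)$ is compact (one-dimensional Rellich), so the convergence of $\tilde{\mathbf{X}}^\tau$ is automatically upgraded to strong convergence in $L^2(0,T;\R^N)$ (and in fact uniformly on $[0,T]$).

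\textbf{Step 3: same limit for $\mathbf{X}^\tau$.} It only remains to check that the piecewise constant interpolation $\mathbf{X}^\tau$ shares the limit $\mathbf{X}_N$. On $I_k$ one has the pointwise identity $\mathbf{X}^\tau(t) - \tilde{\mathbf{X}}^\tau(t) = \tfrac{\mathbf{X}^{k+1}-\mathbf{X}^k}{\tau}\bigl((k+1)\tau - t\bigr)$, hence
\[
\|\mathbf{X}^\tau - \tilde{\mathbf{X}}^\tau\|_{L^2(0,T;\R^N)}^2 = \frac{\tau^2}{3}\sum_{k=0}^{K-1}\tau\,\Bigl|\tfrac{\mathbf{X}^{k+1}-\mathbf{X}^k}{\tau}\Bigr|^2 \le \frac{2N\bar\phi_N}{3}\,\tau^2 \xrightarrow[\tau\to 0]{} 0,
\]
by~\eqref{eq:stnxg}. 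Combined with Step~2 this gives $\mathbf{X}^\tau \to \mathbf{X}_N$ strongly in $L^2$, completing all the assertions of the lemma.

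\textbf{Main obstacle.} There is no serious obstruction: Lemma~\ref{lem:JKO_est} does the real work and this lemma is essentially its continuous-time reformulation. The only subtlety is the comparison between the constant and linear interpolations in Step~3, and this is handled by the explicit telescoping estimate above, which is precisely why the piecewise linear $\tilde{\mathbf{X}}^\tau$ (rather than $\mathbf{X}^\tau$) is the natural object on which to run the weak compactness argument.
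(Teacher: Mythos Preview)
Your proof is correct and follows essentially the same path as the paper: translate each of the four discrete estimates of \Cref{lem:JKO_est} into bounds on the interpolants, apply weak/strong compactness in the finite-dimensional target $\R^N$, and finish by showing the two interpolants share the same limit. The only cosmetic difference is in Step~3: the paper bounds $|\mathbf{X}^\tau(t)-\tilde{\mathbf{X}}^\tau(t)|$ pointwise in $t$ by $\sqrt{\tau N\bar\phi_N}$ via a direct appeal to the one-step energy inequality~\eqref{eq:JKO_comp}, whereas you obtain a (sharper) $L^2_t$ bound of order $\tau$ by integrating $((k+1)\tau-t)^2$ and invoking~\eqref{eq:stnxg}; both arguments are equally valid for the conclusion needed here.
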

	\begin{proof}
		Fix $\tau>0, \, k=0,1,\dots, K-1$, and $t\in I_k$. Immediately from~\eqref{eq:utsnx}, we get
$ \left\|\mathbf{X}^\tau\right\|_{L^\infty(0,T)}<\frac{N}{c_0}\bar{\phi}_N$. 
Since
		\[
		\left|\tilde{\mathbf{X}}^\tau(t)\right| \le \left|\mathbf{X}^{k+1} - \mathbf{X}^k\right|\frac{|t-k\tau|}{\tau} + \left|\mathbf{X}^k\right| \le \left|\mathbf{X}^{k+1}\right| + 2\left|\mathbf{X}^k\right|,
		\]
we also have
		$
		\left|\tilde{\mathbf{X}}^\tau(t)\right|^2\le 6 \left|{\mathbf{X}}^\tau(t)\right|^2 \leq  \frac{6N}{c_0}\bar{\phi}_N.
		$
		Summing up over all $k=0,1,\dots, K-1$ allows to apply~\eqref{eq:stnxg} and deduce
		\begin{align*}
			\left\|\frac{d\tilde{\mathbf{X}}^\tau}{dt} \right\|_{L^2(0,T)}^2 = \sum_{k=0}^{K-1}\int_{k\tau}^{(k+1)\tau}\left|\frac{d\tilde{\mathbf{X}}^\tau}{dt}\right|^2\, \mathrm{d}t = \tau\sum_{k=0}^{K-1} \sum_{i=1}^N\left(\frac{x_i^{k+1}-x_i^k}{\tau}\right)^2 \le 2N\bar{\phi}_N.
		\end{align*}
		Using~\eqref{eq:stnp}, we have
		\[
		\left\|\mathbf{\Lambda}^\tau\right\|_{L^2(0,T)}^2 = \sum_{k=0}^{K-1}\int_{k\tau}^{(k+1)\tau}\left|\mathbf{\Lambda}^\tau(t)\right|^2\, \mathrm{d}t = \tau\sum_{k=0}^{K-1}\left(\sum_{i=1}^N \left|\lambda_i^{k+1}\right|^2\right) \le N\times \left(4c_2^2T\left(1 + \frac{\bar{\phi}_N}{c_0}\right) + 4\bar{\phi}_N\right).
		\]
and the last term in \eqref{eq:tau_cpct} is treated similarly using~\eqref{eq:stnpg} so~\eqref{eq:tau_cpct} is established.
		
		\medskip
		
		All the limits follow immediately from these bounds except for the strong convergence of $\mathbf{X}^\tau$, which is proved by estimating
%
		\begin{align*}
			\left|\mathbf{X}^\tau(t) - \tilde{\mathbf{X}}^\tau(t)\right| = \left|\mathbf{X}^{k+1} - \frac{\mathbf{X}^{k+1}-\mathbf{X}^k}{\tau}(t-k\tau) - \mathbf{X}^k\right| = \frac{\left|\mathbf{X}^{k+1}-\mathbf{X}^k\right|}{\tau}|t - (k+1)\tau| \le \left|\mathbf{X}^{k+1} - \mathbf{X}^k\right|.
		\end{align*}
		Recalling~\eqref{eq:JKO_comp}, we therefore get
		\[
		\left|\mathbf{X}^\tau(t) - \tilde{\mathbf{X}}^\tau(t)\right|^2 \le \tau \sum_{i=1}^N\left(\phi(x_i^k) - \phi(x_i^{k+1})\right) \le \tau \sum_{i=1}^N \left(\phi(x_i^0) - \phi(x_i^{k+1})\right).
		\]
		Using~\eqref{phi} to estimate $-\phi(x_i^{k+1})\le -c_0 <0$, we obtain $\left|\mathbf{X}^\tau(t) - \tilde{\mathbf{X}}^\tau(t)\right| \le \sqrt{\tau N \bar{\phi}_N}$. This estimate is true for any arbitrary $\tau>0$ and $t\in[0,T]$. In light of the strong convergence $\tilde{\mathbf{X}}^\tau \to \mathbf{X}_N$ in $L^2(0,T)$, this also implies $\mathbf{X}^\tau$ converges strongly to $\mathbf{X}_N$ in $L^2(0,T)$.
	\end{proof}
	In~\Cref{lem:JKO_est,lem:tau_cpct}, we did not actually use the second derivative of $\phi$ besides estimating $\phi'$. In order to pass to the limit $\tau\to 0$ at the level of the Euler-Lagrange equation~\eqref{eq:EL_micro} and~\eqref{eq:slack_micro}, the uniform bound on $\phi''$ will play a crucial role.
	\begin{proposition}[Solutions to~\eqref{eq:EL_r}--\eqref{eq:lambdabc}]
		\label{prop:EL_r}
		Assume that $\phi$ satisfies~\eqref{phi}. The limits $\mathbf{X}_N = (x_1,\dots, x_N)\in H^1\left(0,T;\, \R^N \right)$ and $\mathbf{\Lambda}_N = (\lambda_1, \dots, \lambda_N)\in L^2\left(0, T; \, \R^N\right)$ obtained from~\Cref{lem:tau_cpct} solve~\eqref{eq:EL_r}--\eqref{eq:lambdabc}.
	\end{proposition}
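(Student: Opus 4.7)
\textbf{Plan of proof of Proposition \ref{prop:EL_r}.} The strategy is to pass to the limit $\tau\to 0$ in the time-discrete Euler-Lagrange system \eqref{eq:EL_micro}--\eqref{eq:slack_micro}, rewritten in terms of the interpolations. Multiplying \eqref{eq:EL_micro} by $N$, this system reads pointwise in $t\in I_k$:
\begin{equation}\label{eq:prop_rewrite}
\frac{d\tilde{x}_i^{\tau}}{dt}(t) = -\phi'(x_i^{\tau}(t)) - N(\lambda_i^{\tau}(t) - \lambda_{i-1}^{\tau}(t)),
\end{equation}
with $\lambda_0^{\tau}=\lambda_N^{\tau}=0$, together with $x_{i+1}^{\tau}-x_i^{\tau}\ge 1/N$, $\lambda_i^{\tau}\ge 0$, and $\lambda_i^{\tau}(x_{i+1}^{\tau}-x_i^{\tau}-1/N)=0$ a.e.\ in $(0,T)$.

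The first step is to pass to the limit in \eqref{eq:prop_rewrite}. By Lemma \ref{lem:tau_cpct}, $\mathbf{X}^{\tau}\to\mathbf{X}_N$ strongly in $L^2(0,T;\R^N)$ while $d\tilde{\mathbf{X}}^{\tau}/dt\rightharpoonup d\mathbf{X}_N/dt$ and $\mathbf{\Lambda}^{\tau}\rightharpoonup\mathbf{\Lambda}_N$ weakly in $L^2(0,T;\R^N)$. The bound $\|\phi''\|_{\infty}\le c_2$ from \eqref{phi} makes $\phi'$ globally Lipschitz, so that $\phi'(x_i^{\tau})\to \phi'(x_i)$ strongly in $L^2(0,T)$; here is precisely where the second part of \eqref{phi} enters, whereas \Cref{lem:JKO_est,lem:tau_cpct} only needed the linear growth of $\phi'$. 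The remaining term $N(\lambda_i^{\tau}-\lambda_{i-1}^{\tau})$ is a fixed linear combination of weakly convergent sequences, hence converges weakly in $L^2(0,T)$. Passing to the distributional limit in \eqref{eq:prop_rewrite} yields \eqref{eq:EL_r}, and $\lambda_0^{\tau}\equiv\lambda_N^{\tau}\equiv 0$ gives \eqref{eq:lambdabc} directly.

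It remains to recover the constraints \eqref{non_overlap} and the slackness \eqref{eq:slack_r}. The sets $\{\mathbf{X}:x_{i+1}-x_i\ge 1/N\}$ and $\{\mathbf{\Lambda}:\lambda_i\ge 0\}$ are closed and convex, hence weakly closed in $L^2$, so the pointwise-a.e.\ inequalities $x_{i+1}-x_i\ge 1/N$ and $\lambda_i\ge 0$ are inherited by $(\mathbf{X}_N,\mathbf{\Lambda}_N)$. For the complementary slackness, I would use a weak-strong compensated compactness argument: the identity $\lambda_i^{\tau}(x_{i+1}^{\tau}-x_i^{\tau}-1/N)\equiv 0$ combined with the weak convergence $\lambda_i^{\tau}\rightharpoonup\lambda_i$ in $L^2$ and the strong convergence $x_{i+1}^{\tau}-x_i^{\tau}\to x_{i+1}-x_i$ in $L^2$ yields $\lambda_i(x_{i+1}-x_i-1/N)=0$ as a distributional (hence a.e.) identity. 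Finally, $\tilde{\mathbf{X}}^{\tau}(0)=\mathbf{X}^0$ for every $\tau$, and since $\tilde{\mathbf{X}}^{\tau}\rightharpoonup\mathbf{X}_N$ in $H^1(0,T;\R^N)$, the continuity of the trace at $t=0$ forces $\mathbf{X}_N(0)=\mathbf{X}^0$.

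The only delicate step is passing to the limit in the nonlinear terms: the strong $L^2$ convergence of $\mathbf{X}^{\tau}$ coupled with the global Lipschitz property of $\phi'$ handles $\phi'(x_i^{\tau})$, and the same strong convergence paired against the weakly converging $\lambda_i^{\tau}$ is what allows the product in the slackness condition to pass to the limit. Everything else is a routine consequence of the uniform estimates already established in Lemmas \ref{lem:JKO_est} and \ref{lem:tau_cpct}.
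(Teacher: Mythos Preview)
Your proof is correct and follows essentially the same route as the paper: rewrite the discrete Euler--Lagrange system in terms of the interpolants, pass to the limit in the linear terms by weak $L^2$ convergence, handle $\phi'(x_i^\tau)$ via the Lipschitz bound from \eqref{phi} and the strong convergence of $\mathbf{X}^\tau$, and recover the slackness condition by the weak--strong product argument. Your additional remarks on the weak closedness of the constraint sets and on the trace at $t=0$ are minor elaborations not spelled out in the paper, but the core argument is the same.
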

	\begin{proof}
		Throughout this proof, every convergence statement as $\tau\to 0$ is understood to take place along a subsequence.
		
		By definition in~\eqref{eq:EL_micro}, we have $\lambda_0^\tau(t) = \lambda_N^\tau(t) = 0$ for every $t\in(0,T]$. Hence, in the limit $\tau\to 0$, we recover the boundary conditions \eqref{eq:lambdabc}, $\lambda_0 = \lambda_N = 0$. We can rewrite the discrete evolution equation in~\eqref{eq:EL_micro} as
		\begin{equation}
			\label{eq:disc_EL}
			\frac{1}{N}\phi'(x_i^\tau(t)) + \frac{1}{N}\frac{dx_i^\tau(t)}{dt} + \lambda_i^\tau(t) - \lambda_{i-1}^\tau(t) = 0, \quad \forall i=1,\dots, N, \quad \text{almost every }t\in (0,T).
		\end{equation}
\Cref{lem:tau_cpct} gives $\frac{d\tilde{x}_i^\tau}{dt}\rightharpoonup \frac{dx_i}{dt}$ and $\lambda_i^\tau \rightharpoonup \lambda_i$ weakly in $L^2(0,T)$ as $\tau \to 0$ for every $i=1,\dots, N$. 
For the nonlinear term, we can use~\eqref{phi} to get
		\begin{align*}
			\left\| \phi'(x_i^\tau) - \phi'(x_i) \right\|_{L^2(0,T)}^2 &  \le c_2\|x_i^\tau(t) - x_i(t)\|_{L^2(0,T)}^2
			\end{align*}
			and so the strong convergence $\mathbf{X}^\tau \to \mathbf{X}_N$ from~\Cref{lem:tau_cpct}, 
			implies  $\phi'(x_i^\tau) \to \phi'(x_i)$ strongly in $L^2(0,T)$ as $\tau \to 0$ for every $i=1,\dots, N$. Therefore, we can pass to the limit $\tau\to 0$ in~\eqref{eq:disc_EL} and we get~\eqref{eq:EL_r} which reads
		\[
		\frac{1}{N}\phi'(x_i) + \frac{1}{N}\frac{dx_i}{dt} + \lambda_i - \lambda_{i-1} = 0, \quad \forall i=1,\dots, N, \quad \text{a.e. }t\in[0,T].
		\]
		Finally,  at the $\tau>0$ level, we use the piecewise constant interpolant $\mathbf{X}^\tau$ to express~\eqref{eq:slack_micro} as
		\[
		\lambda_i^\tau \left(-x_{i+1}^\tau + x_i^\tau + \frac{1}{N}\right) = 0\quad\text{and}\quad x_{i+1}^\tau - x_i^\tau \geq  \frac{1}{N}, \quad \forall i=1,\dots, N-1.
		\]
		In the topology of $L^2(0,T)$, the left-hand side of the equality is a product of $\lambda_i^\tau$, which weakly converges, and $-x_{i+1}^\tau + x_i^\tau+\frac{1}{N+1}$, which strongly converges. Passing to the limit $\tau\to 0$ recovers~\eqref{non_overlap} and \eqref{eq:slack_r}.
	\end{proof}
We end this section with some qualitative results about $\mathbf{X}_N(t)$. We note that the embedding $H^1(0,T)\subset C^\frac{1}{2}(0,T)$ implies that
$\mathbf{X}_N(t)$ is an absolutely continuous solution to
	\begin{equation}
		\label{eq:Maury}
		\frac{d\mathbf{X}_N(t)}{dt} + C(\mathbf{X}_N(t))^\circ \ni - \Phi'(\mathbf{X}(t)),\quad t\in[0,T], \quad \mathbf{X}_N(0) = \mathbf{X}^0 \in \mathcal{K}_N,
	\end{equation}
	where, for $\mathbf{X} = (x_1,\dots, x_N)\in \mathcal{K}_N$, the polar cone $C(\mathbf{X})^\circ \subset \R^N$ is given~\cite{MV11} by
	\[
	C(\mathbf{X})^\circ = \left\{
	\left(\mu_1 - \mu_0, \mu_2 - \mu_1, \dots, \mu_N - \mu_{N-1} \right) \, \left| \, 
	\begin{array}{l}
		\mu_i \ge 0, \, \forall i=0,1,\dots, N,\text{ and }	\\
		-x_{j+1}+x_j+\frac 1 N < 0 \implies \mu_j = 0
	\end{array}
	\right.
	\right\}.
	\]
In fact we have:
	\begin{proposition}[Connection to~\cite{MV11}]
		\label{prop:Maury}
		Let $\mathbf{X}_N\in H^1(0,T; \, \R^N)$ and $\mathbf{\Lambda}_N\in L^2(0,T; \, \R^{N+1})$ be described as in~\Cref{prop:EL_r}. We have the following statements.
		\begin{enumerate}
			\item  $\mathbf{X}_N$ is the \textbf{only} absolutely continuous (and, hence $H^1$) solution to~\eqref{eq:Maury}.
			\item $(\mathbf{X}^\tau, \mathbf{\Lambda}^\tau)\to (\mathbf{X}_N,\mathbf{\Lambda}_N)$ holds as in~\Cref{lem:tau_cpct}  for the \textbf{entire} sequence $\tau\to 0$.
			\item  $\mathbf{X}_N$ is a solution of~\eqref{eq:micro}.
		\end{enumerate}
	\end{proposition}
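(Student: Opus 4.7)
The plan is to establish the three claims in order. I expect the main obstacle to lie in (1), where one must exploit the convex geometry of $\mathcal{K}_N$ to cancel the Lagrange-multiplier contributions; once (1) is proved, (2) becomes a soft subsequence argument and (3) reduces to Moreau's orthogonal decomposition.

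For (1), I would suppose $\mathbf{X}_1,\mathbf{X}_2\in AC([0,T];\mathbb{R}^N)$ both solve \eqref{eq:Maury} with $\mathbf{X}_1(0)=\mathbf{X}_2(0)=\mathbf{X}^0\in\mathcal{K}_N$, and write $\mathbf{v}_j:=-\Phi'(\mathbf{X}_j)-\dot{\mathbf{X}}_j\in C(\mathbf{X}_j)^\circ$. The key observation is that $\mathcal{K}_N$ is a closed convex polyhedron and $C(\mathbf{X})$ is its tangent cone at $\mathbf{X}\in\mathcal{K}_N$, so $C(\mathbf{X})^\circ$ coincides with the outward normal cone $N_{\mathcal{K}_N}(\mathbf{X})$. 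Monotonicity of the normal-cone map of a convex set then gives
\[
\langle \mathbf{v}_1(t)-\mathbf{v}_2(t),\,\mathbf{X}_1(t)-\mathbf{X}_2(t)\rangle\ge 0 \quad\text{a.e.\ }t\in(0,T),
\]
and combined with the Lipschitz estimate $|\Phi'(\mathbf{X}_1)-\Phi'(\mathbf{X}_2)|\le c_2|\mathbf{X}_1-\mathbf{X}_2|$ from \eqref{phi},
\[
\tfrac{1}{2}\tfrac{d}{dt}|\mathbf{X}_1-\mathbf{X}_2|^2 = -\langle \Phi'(\mathbf{X}_1)-\Phi'(\mathbf{X}_2),\mathbf{X}_1-\mathbf{X}_2\rangle - \langle \mathbf{v}_1-\mathbf{v}_2,\mathbf{X}_1-\mathbf{X}_2\rangle \le c_2|\mathbf{X}_1-\mathbf{X}_2|^2.
\]
Grönwall's inequality with zero initial datum then forces $\mathbf{X}_1\equiv \mathbf{X}_2$.

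Claim (2) follows immediately: every subsequential limit of $(\mathbf{X}^\tau,\tilde{\mathbf{X}}^\tau)$ solves \eqref{eq:Maury} by Proposition~\ref{prop:EL_r}, hence coincides with $\mathbf{X}_N$ by (1), so the full sequence converges; summing the Euler--Lagrange equation \eqref{eq:EL_r} from $j=1$ to $i$ with $\lambda_0=0$ then yields $\lambda_i = -\frac{1}{N}\sum_{j=1}^i(\dot x_j+\phi'(x_j))$, pinning down $\mathbf{\Lambda}_N$ and hence the convergence of the full sequence $\mathbf{\Lambda}^\tau$. For (3), I would invoke Moreau's decomposition theorem: for any closed convex cone $C\subset\mathbb{R}^N$ and any $\mathbf{w}\in\mathbb{R}^N$, $\mathbb{P}_C(\mathbf{w})$ is the unique $\mathbf{p}\in C$ with $\mathbf{w}-\mathbf{p}\in C^\circ$ and $\langle \mathbf{p},\mathbf{w}-\mathbf{p}\rangle=0$. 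Taking $\mathbf{w}=-\Phi'(\mathbf{X}_N)$, $\mathbf{p}=\dot{\mathbf{X}}_N$, and $\mathbf{w}-\mathbf{p}=\mathbf{v}_N$ (from Proposition~\ref{prop:EL_r}), it suffices to verify that $\dot{\mathbf{X}}_N(t)\in C(\mathbf{X}_N(t))$ a.e.\ and that $\langle\dot{\mathbf{X}}_N,\mathbf{v}_N\rangle=0$ a.e. The first holds because whenever $x_{i+1}(t)-x_i(t)=\tfrac{1}{N}$ at a Lebesgue point $t$, the AC function $t'\mapsto x_{i+1}(t')-x_i(t')\ge\tfrac{1}{N}$ attains its minimum there, so its derivative is non-negative (and in fact $0$ at density points of the contact set). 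The second follows from a discrete summation by parts using $\lambda_0=\lambda_N=0$, which gives $\langle\dot{\mathbf{X}}_N,\mathbf{v}_N\rangle=-N\sum_{i=1}^{N-1}\lambda_i(\dot x_{i+1}-\dot x_i)$, where each summand vanishes a.e.\ by the slackness condition \eqref{eq:slack_r} combined with the same contact-set argument.
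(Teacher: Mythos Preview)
Your argument is correct and more self-contained than the paper's. The paper's own proof simply invokes \cite{MV11}: Theorem~2.10 there gives uniqueness for~\eqref{eq:Maury} directly, which is quoted for~(1); part~(2) is then the same subsequence-plus-uniqueness remark you make; and~(3) is obtained by citing Remark~2.11 of~\cite{MV11} rather than redoing Moreau's decomposition.

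The substantive difference is that you reprove the relevant pieces of~\cite{MV11} from scratch. For~(1), your identification $C(\mathbf{X})^\circ=N_{\mathcal K_N}(\mathbf{X})$ and the monotonicity/Gr\"onwall computation is exactly the standard uniqueness proof for differential inclusions governed by the normal cone of a convex set; this is what underlies the cited theorem, so the approaches coincide in spirit. For~(3), your route via Moreau's decomposition is again the standard mechanism behind the cited remark; the verification that $\dot{\mathbf X}_N\in C(\mathbf X_N)$ a.e.\ and $\langle\dot{\mathbf X}_N,\mathbf v_N\rangle=0$ a.e.\ using the fact that an AC function has zero derivative at a.e.\ point of its zero set is the correct way to handle the contact set. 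For~(2), you are slightly more explicit than the paper in noting that $\mathbf\Lambda_N$ is \emph{determined} by $\mathbf X_N$ through the telescoped Euler--Lagrange relation, which is what justifies uniqueness of the $\mathbf\Lambda$-limit as well. In short: same content, but your version is a direct proof while the paper outsources~(1) and~(3) to~\cite{MV11}.
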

	\begin{proof}
		We alluded to~\cite{MV11} prior to the statement. This proof is based on results in that reference:
		\begin{enumerate}
			\item Theorem 2.10 from~\cite{MV11} asserts that~\eqref{eq:Maury} has exactly one absolutely continuous solution on $[0,T]$. Having shown in~\Cref{prop:EL_r} that $\mathbf{X}_N \in H^1(0,T)\subset C^\frac{1}{2}(0,T)$ is a solution to~\eqref{eq:EL_r}, we conclude uniqueness.
			\item If $(\mathbf{X}^*,\mathbf{\Lambda}^*)$ is another limit point of $(\mathbf{X}^\tau,\mathbf{\Lambda}^\tau)_{\tau>0}$, then uniqueness guarantees $(\mathbf{X}^*,\mathbf{\Lambda}^*) = (\mathbf{X}_N, \mathbf{\Lambda}_N)$.
			\item Remark 2.11 from~\cite{MV11} asserts that the solution of~\eqref{eq:Maury} is also a solution of~\eqref{eq:micro}.
		\end{enumerate}
	\end{proof}
	We wish to repeat that, although~\cite{MV11} already has a well-posedness theory for~\eqref{eq:Maury} (equivalently~\eqref{eq:EL_r}), their method does not provide much information on the element(s) of $C(\mathbf{X}_N(t))^\circ$. Recall~\eqref{eq:tau_cpct} which bounds $x_i$ and $\lambda_i$ in various (discrete) norms. Such estimates are not immediately clear from the theory in~\cite{MV11}. Furthermore, these estimates will be crucial in the $N\to \infty$ limit in~\Cref{sec:N_to_inf}.

	\section{Infinite number of particles $N\to \infty$}
	\label{sec:N_to_inf}
In this section, we show how to pass to the limit of a large number of particles $N\to\infty$ when $2r=\frac 1 N$:
We will show that, up to a subsequence, the discrete (in space) functions $X_N(t,s)$ and $\Lambda_N(t,s)$ converge to $(X(t,s),\Lambda(t,s))$ which we prove solve~\eqref{ode:1} and~\eqref{ode:2}.
This will (almost) prove \Cref{thm:q1}: The only missing piece will be the convergence of the entire sequence, which will be a consequence of the uniqueness result proved in~\Cref{sec:uniqueness}.

	\subsection{Discrete-in-$N$ approximation}
	\label{sec:construct}

For $N\in\mathbb N$, let  $\mathbf{X}_N^0 = (x_1^0,\dots x_N^0) \in \R^N$, and the corresponding solutions
$\mathbf{X}_N\in H^1(0,T; \, \R^N)$, $\mathbf{\Lambda}_N \in L^2\left(0,T; \, \R^{N+1}\right)$  of \eqref{eq:EL_r}--\eqref{eq:lambdabc} be as constructed  in \Cref{sec:tau_to_zero}.

Then the interpolations, $X_N(t,s)$ by \eqref{eq:XN} and $\tilde \Lambda_N (t)$ by \eqref{eq:LambdaN}, satisfy

	\begin{equation}\label{eq:EL_sigma}
		\left\{
		\begin{array}{ll}
			\partial_tX_N = -\phi'(X_N) -   \partial_s\tilde{\Lambda}_N, &\text{a.e. } (t,s)\in[0,T]\times \left(0, \, 1\right),	\\
			\tilde{\Lambda}_N\left(t,0\right) = \tilde{\Lambda}_N(t,1) = 0, 	&\text{a.e. }t\in[0,T],
		\end{array}
		\right.
	\end{equation}
	and 	we will derive \eqref{ode:1} by passing to the limit in this equation.
In order to derive \eqref{ode:2}, we need to reformulate \eqref{eq:slack_r} in a similar way which requires the introduction of the following piecewise \textbf{linear} interpolation of $X$ and piecewise \textbf{constant} interpolation for the discrete pressure:
\begin{equation}\label{eq:quantile_piece_lin}
	\tilde{X}_N(t,s) =
	N(x_{i+1}(t) - x_{i}(t))\left(s - \frac{i}{N}\right) + x_{i}(t),\quad s\in\left[\frac{i}{N}, \frac{i+1}{N}\right]  \quad  i=0,\dots, N-1,
	\end{equation}
recalling $x_0(t):= x_1(t) - \frac{2}{N}$ and
\[
\Lambda_N(t,s):=  
		\lambda_{i}(t), 	\qquad s\in\left[\frac{i}{N}, \frac{i+1}{N}\right) 
	 ,\quad i=0,\dots, N-1.
\]
Since $\pa_s \tilde{X}_N(t,s) = N(x_{i+1}(t) - x_i(t))$ for $s\in\left(\frac{i}{N}, \frac{i+1}{N}\right)$, 
	we can rewrite~\eqref{eq:slack_r} as
	\begin{equation}
		\label{eq:slack_sigma}
		\Lambda_N(t,s)\left(1 - \partial_s\tilde{X}_N(t,s) \right) = 0\quad \text{and}\quad \partial_s\tilde{X}_N(t,s)\ge 1 \quad \text{a.e. }(t,s)\in[0,T]\times[0,1].
	\end{equation}

We now need to pass to the limit $N\to\infty$ in \eqref{eq:EL_sigma} and \eqref{eq:slack_sigma}, making sure that $X_N$ and  $\tilde{X}_N$ (as well as $\Lambda_N$ and $\tilde{\Lambda}_N$) have the same limits.
Note that passing to the limit in the term $\phi'(X_N)$ in \eqref{eq:EL_sigma} will require some strong convergence of $X_N$.

	\subsection{Estimates and compactness}\label{sec:sigma_cpct}
	We recall that the initial condition are assumed to satisfy \eqref{x^0_phi} and \eqref{bv_endpts}, which we can also write as
$$	\bar\phi_N=	 \frac{1}{N}\sum_{i=1}^N \phi(x_i^0)  \le C\text{ and }   |x_N(0)-x_0(0)| \le C, \, \forall N\in\N,
$$
where $C>0$ is some constant independent of $N$.
We first have the following	 estimates. These follow rather directly from  \eqref{eq:tau_cpct}, and thus we omit the proof.
	\begin{proposition}[Uniform-in-$N$ estimates for the interpolants]
		\label{prop:sigma_cpct_1}
		For $\phi$ and  $\mathbf{X}_N^0\in \mathcal{K}_N$ satisfying \eqref{phi} and ~\eqref{x^0_phi}, there exists a universal constant $C>0$ independent of $N$ such that
		\begin{equation}
			\label{eq:tau_cpct_sigma}
			\left\| X_N\right\|_{L^\infty_tL^2_s}+
\left\| \tilde X_N\right\|_{L^\infty_tL^2_s}+\left\|\partial_t\tilde{X}_N\right\|_{L_{t,s}^2} +\left\|\partial_tX_N\right\|_{L_{t,s}^2} + \left\|\tilde{\Lambda}_N\right\|_{L_{t,s}^2} +\left\|\Lambda_N\right\|_{L_{t,s}^2} +  \left\|\partial_s\tilde{\Lambda}_N\right\|_{L_{t,s}^2} \le C.
		\end{equation}
where $L^\infty_t L^2_s=L^\infty(0,T,L^2(0,1))$ and $L_{t,s}^2=L^2(0,T,L^2(0,1))$.
	\end{proposition}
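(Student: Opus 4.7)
The plan is to translate each of the continuous norms on the left-hand side of~\eqref{eq:tau_cpct_sigma} into an explicit discrete sum involving $(x_i(t), \lambda_i(t))$, and then invoke the corresponding bounds from~\Cref{lem:JKO_est} (passed through the limit $\tau\to 0$, which has already been carried out in~\Cref{lem:tau_cpct}). The key point is that the hypothesis~\eqref{x^0_phi} ensures that $\bar\phi_N = \frac{1}{N}\sum_i \phi(x_i^0)$ is bounded uniformly in $N$, so each of the $N$-dependent constants in~\Cref{lem:JKO_est} becomes a constant independent of $N$ after division by $N$.

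First I will handle the piecewise-constant interpolants. For $X_N$ defined by~\eqref{eq:XN}, since the piece on $\left(\frac{i-1}{N},\frac{i}{N}\right]$ has length $\frac{1}{N}$ and takes the value $x_i(t)$, one computes directly
\[
\|X_N(t,\cdot)\|_{L^2_s}^2 = \frac{1}{N}\sum_{i=1}^N |x_i(t)|^2 \le \frac{\bar\phi_N}{c_0}
\]
by~\eqref{eq:utsnx} (in its $\tau\to 0$ form). The same calculation gives $\|\Lambda_N\|_{L^2_{t,s}}^2 = \int_0^T \frac{1}{N}\sum_i |\lambda_i|^2\,dt$, controlled by~\eqref{eq:stnp}, and $\|\partial_t X_N\|_{L^2_{t,s}}^2 = \int_0^T \frac{1}{N}\sum_i |\dot x_i|^2\,dt$, controlled by~\eqref{eq:stnxg}. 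For the piecewise-linear interpolant $\tilde\Lambda_N$ defined by~\eqref{eq:LambdaN}, the slope on the $i$-th interval is exactly $N(\lambda_i-\lambda_{i-1})$, so
\[
\|\partial_s \tilde\Lambda_N\|_{L^2_{t,s}}^2 = \int_0^T \frac{1}{N}\sum_{i=1}^N |N(\lambda_i-\lambda_{i-1})|^2\,dt,
\]
which is controlled by~\eqref{eq:stnpg}. The $L^2_{t,s}$ bound on $\tilde\Lambda_N$ itself follows from a standard affine-interpolation estimate, $|\tilde\Lambda_N(t,s)|^2 \le |\lambda_i|^2 + |\lambda_{i-1}|^2$ on the $i$-th interval, combined again with~\eqref{eq:stnp}.

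The piecewise-linear interpolant $\tilde X_N$ defined by~\eqref{eq:quantile_piece_lin} requires a little more care because it uses the ghost point $x_0 = x_1 - 2/N$. However, on each interval $\left[\frac{i}{N},\frac{i+1}{N}\right]$, affine interpolation gives $|\tilde X_N|^2 \le |x_i|^2 + |x_{i+1}|^2$, so
\[
\|\tilde X_N(t,\cdot)\|_{L^2_s}^2 \le \frac{2}{N}\sum_{i=0}^{N}|x_i(t)|^2,
\]
and since $|x_0|^2 \le 2|x_1|^2 + 8/N^2$, the sum is still controlled by $\bar\phi_N$ via~\eqref{eq:utsnx} up to a universal additive constant. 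Similarly, $\partial_t \tilde X_N$ is the affine interpolant of $(\dot x_i)_i$ (with $\dot x_0 = \dot x_1$), giving $\|\partial_t \tilde X_N\|_{L^2_{t,s}}^2 \le \frac{2}{N}\int_0^T\sum_i |\dot x_i|^2\,dt$, bounded by~\eqref{eq:stnxg}.

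The only step that is not purely routine is verifying that the $\tau\to 0$ limit passes the discrete bounds of~\Cref{lem:JKO_est} to the continuous-time $x_i(t),\lambda_i(t)$. This is precisely what~\Cref{lem:tau_cpct} delivers via weak lower semicontinuity of norms; since the bounds in~\Cref{lem:JKO_est} are uniform in $\tau$ (and scale correctly with $N$), they survive. I do not expect a genuine obstacle here --- the assumption~\eqref{x^0_phi} was designed precisely to make $\bar\phi_N$ absorbed into the universal constant $C$. Note that the BV-type assumption~\eqref{bv_endpts} is not needed for~\Cref{prop:sigma_cpct_1} itself; it will enter later to derive finer BV estimates required for passing to the limit in the nonlinear term $\phi'(X_N)$.
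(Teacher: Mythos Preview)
Your proposal is correct and follows essentially the same approach as the paper: the paper states that the estimates ``follow rather directly from~\eqref{eq:tau_cpct}, and thus we omit the proof,'' and the (commented-out) argument in the source computes each norm exactly as you do, piece by piece, reducing to the discrete sums controlled by~\Cref{lem:JKO_est} after passing to the $\tau\to 0$ limit. Your remark that~\eqref{bv_endpts} is not needed here, and your handling of the ghost point $x_0$ and the convexity bound for the affine interpolants, are all in line with the paper's treatment.
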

In order to get the strong convergence of $X_N$, we will need the following crucial $BV$ estimate, whose proof is postponed to the end of this subsection.
	\begin{proposition}[Uniform-in-$N$ BV bound for $\tilde{X}_N$]
		\label{prop:BV_est}
		For $\phi$ and $\mathbf{X}_N^0\in \mathcal{K}_N$ satisfying \eqref{phi} and \eqref{bv_endpts},  
there exists a constant $C>0$ independent of $N$ such that
		\begin{equation}
			\label{eq:BV_est}
	\int_0^1|\partial_s\tilde{X}_N(t,s)|\,\mathrm{d}s=x_N(t) - x_0(t) \le Ce^{\|\phi''\|_{L^\infty}t}, \quad \forall t\in[0,T],\, \forall N\in\mathbb{N}.
		\end{equation}
	\end{proposition}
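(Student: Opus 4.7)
The plan is to reduce the BV estimate to a differential inequality for the particle spread $L_N(t) := x_N(t) - x_1(t)$. Since the slopes $N(x_{i+1}(t) - x_i(t)) \ge 1$ by \eqref{non_overlap} with $2r = 1/N$, the piecewise linear interpolant $\tilde{X}_N(t,\cdot)$ is monotone increasing, so
\[
\int_0^1 |\partial_s \tilde{X}_N(t,s)|\,\mathrm{d}s = \tilde{X}_N(t,1) - \tilde{X}_N(t,0) = x_N(t) - x_0(t) = L_N(t) + \frac{2}{N}.
\]
By \eqref{bv_endpts}, $L_N(0)$ is bounded uniformly in $N$, so it suffices to prove $L_N$ grows at most exponentially with rate $\|\phi''\|_{L^\infty}$.

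The key structural observation is a \emph{cluster decomposition} of the velocities. Fix $t \in (0,T)$ in the full-measure subset on which every $x_i$ is differentiable and \eqref{eq:EL_r}--\eqref{eq:slack_r} hold. Since $s \mapsto x_{i+1}(s) - x_i(s) \ge 1/N$ attains a global minimum wherever it equals $1/N$, its derivative must vanish there, and hence $\dot{x}_{i+1}(t) = \dot{x}_i(t)$ whenever $x_{i+1}(t) - x_i(t) = 1/N$. Define the \emph{left cluster} $C_1(t) := \{1,2,\dots,k\}$, where $k$ is the largest integer with $x_j(t) - x_1(t) = (j-1)/N$ for all $j \le k$. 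Then either $k = N$ or $x_{k+1}(t) - x_k(t) > 1/N$, and in both cases $\lambda_k(t) = 0$ (by \eqref{eq:lambdabc} or \eqref{eq:slack_r}, respectively). Summing \eqref{eq:EL_r} over $i=1,\dots,k$ and using $\lambda_0(t) = \lambda_k(t) = 0$ to telescope the multipliers, combined with the common velocity $\dot{x}_1(t) = \dots = \dot{x}_k(t) =: v_1(t)$, gives
\[
v_1(t) = -\frac{1}{k}\sum_{i=1}^k \phi'(x_i(t)).
\]
The diameter of $C_1(t)$ satisfies $x_k(t) - x_1(t) = (k-1)/N \le 1$, so the Lipschitz property $|\phi''| \le c_2 = \|\phi''\|_{L^\infty}$ from \eqref{phi} yields $|\phi'(x_i(t)) - \phi'(x_1(t))| \le \|\phi''\|_{L^\infty}$ for $i \in C_1(t)$, whence $\dot{x}_1(t) = -\phi'(x_1(t)) + \epsilon_1(t)$ with $|\epsilon_1(t)| \le \|\phi''\|_{L^\infty}$. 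The identical argument applied to the right cluster $C_N(t)$ gives $\dot{x}_N(t) = -\phi'(x_N(t)) + \epsilon_N(t)$ with $|\epsilon_N(t)| \le \|\phi''\|_{L^\infty}$.

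Subtracting the two identities and using $|\phi'(x_1) - \phi'(x_N)| \le \|\phi''\|_{L^\infty} L_N$ gives the differential inequality
\[
\dot{L}_N(t) = \phi'(x_1(t)) - \phi'(x_N(t)) + \epsilon_N(t) - \epsilon_1(t) \le \|\phi''\|_{L^\infty} L_N(t) + 2\|\phi''\|_{L^\infty}
\]
for a.e.\ $t \in (0,T)$. Since $L_N \in H^1(0,T) \subset AC([0,T])$ by Lemma \ref{lem:tau_cpct}, Grönwall's inequality yields $L_N(t) \le (L_N(0)+2)e^{\|\phi''\|_{L^\infty} t} - 2$, and adding $2/N$ together with the uniform bound on $L_N(0)$ from \eqref{bv_endpts} gives \eqref{eq:BV_est}. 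The delicate point of the argument is the a.e.\ justification of $\dot{x}_{i+1} = \dot{x}_i$ on active constraints: this rests on the $H^1$-regularity of $\mathbf{X}_N$ from Lemma \ref{lem:tau_cpct} and the elementary observation that $x_{i+1} - x_i$ attains its global minimum whenever it equals $1/N$.
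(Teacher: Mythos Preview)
Your proof is correct and takes a genuinely different route from the paper's. The paper proves the stronger local statement (Lemma~\ref{lem:omega_i_exp}) that each individual gap $\omega_i(t):=N(x_{i+1}(t)-x_i(t))$ satisfies $\omega_i(t)\le \omega_i(0)e^{\|\phi''\|_{L^\infty}t}$: on any time interval where $\omega_i>1$ one has $\lambda_i=0$ by slackness, and then \eqref{eq:EL_r} together with $\lambda_{i\pm1}\ge 0$ gives $\dot\omega_i\le \|\phi''\|_{L^\infty}\omega_i$; the bound is trivial when $\omega_i=1$. Summing over $i$ then yields \eqref{eq:BV_est}. Your argument instead works globally on the spread $L_N=x_N-x_1$, using the cluster structure to express the endpoint velocities as averages of $-\phi'$ over the extremal clusters. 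The paper's approach buys an extra piece of information that is actually used elsewhere (see the Remark following Theorem~\ref{thm:wp_micro}): since every gap is controlled individually, jump discontinuities of $s\mapsto X(t,s)$ cannot appear where there were none initially, i.e.\ the number of connected components of $\supp\rho(t)$ cannot increase. Your argument does not yield this, but it is self-contained and highlights a nice structural fact about the dynamics---that on active constraints the contacting particles share a common velocity---which the paper does not make explicit. The justification you flag as ``delicate'' is indeed sound: for $g:=x_{i+1}-x_i-\tfrac1N\in H^1(0,T)$ with $g\ge 0$, at any interior point where $g$ is differentiable and $g=0$ one has $g'=0$ by Fermat's rule, and intersecting over the finitely many indices $i$ still leaves a full-measure set of admissible times.
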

We point out that \eqref{eq:BV_est} is equivalent to a uniform  bound on the diameter of the support of the associated density distribution function. 
As a consequence, we have:
	
	\begin{corollary}[Compactness for $X_N, \tilde{X}_N, \Lambda_N$, and $\tilde{\Lambda}_N$]
		\label{cor:sigma_cpct_1}
		Assume the $\phi$ and $\mathbf{X}_N^0\in\mathcal{K}_N$ fulfill all of~\eqref{phi} and~\eqref{x^0_phi}--\eqref{bv_endpts}, respectively. There exist curves $X\in H_t^1(0,T;\, L_s^2(0,1))$ and $\Lambda \in L_t^2(0,T; \, H_s^1(0,1))$ such that, up to a subsequence,
		\begin{enumerate}
			\item 
			$
		\tilde{X}_N, X_N \to X$ strongly in $C([0,T]; \, L^p(0,1))$ for any $p\in[1,2)$,
			\item {both} $\Lambda_N$ and $\tilde{\Lambda}_N$ converge to $\Lambda$ weakly in $L_{t,s}^2([0,T] \times [0,1])$ and  $\partial_s\tilde{\Lambda}_N\rightharpoonup \partial_s \Lambda$ weakly in $L_{t,s}^2([0,T]\times [0,1])$ as $N\to \infty$. 
		\end{enumerate}
	\end{corollary}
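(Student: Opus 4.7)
The plan is to extract weak limits for the multiplier from the Hilbert-space bound provided by Proposition~\ref{prop:sigma_cpct_1}, obtain strong convergence of the position from an Ascoli--Arzel\`a argument using the BV estimate of Proposition~\ref{prop:BV_est}, and finally glue the piecewise constant and piecewise linear interpolants by explicit cell-wise comparisons. For the multiplier, $\tilde\Lambda_N$ is uniformly bounded in the reflexive Hilbert space $L^2(0,T;H^1(0,1))$ by Proposition~\ref{prop:sigma_cpct_1}, so Banach--Alaoglu produces a subsequence and $\Lambda\in L^2(0,T;H^1(0,1))$ with $\tilde\Lambda_N\rightharpoonup\Lambda$ and $\partial_s\tilde\Lambda_N\rightharpoonup\partial_s\Lambda$ weakly in $L^2_{t,s}$. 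To identify the limit of the piecewise-constant $\Lambda_N$, a direct computation on each cell $[i/N,(i+1)/N]$ gives $\Lambda_N-\tilde\Lambda_N = -N(\lambda_{i+1}-\lambda_i)(s-i/N)$, which sums to $\|\Lambda_N-\tilde\Lambda_N\|^2_{L^2_s}=\tfrac{1}{3N^2}\|\partial_s\tilde\Lambda_N\|^2_{L^2_s}=O(1/N^2)$, forcing $\Lambda_N\to\Lambda$ strongly in $L^2_{t,s}$ (in particular weakly, as claimed).

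For the position, I apply an Ascoli--Arzel\`a argument to the curves $t\mapsto\tilde X_N(t,\cdot)$ valued in $L^p(0,1)$. Time equicontinuity follows from Cauchy--Schwarz and the $L^2_{t,s}$ bound on $\partial_t\tilde X_N$: $\|\tilde X_N(t_1)-\tilde X_N(t_2)\|_{L^2_s}\leq C|t_1-t_2|^{1/2}$. Pointwise-in-$t$ relative compactness requires uniform $BV$ \emph{and} $L^\infty$ bounds on $\tilde X_N(t,\cdot)$: Proposition~\ref{prop:BV_est} provides the total-variation bound, and combined with the $L^\infty_t L^2_s$ bound from Proposition~\ref{prop:sigma_cpct_1} and the monotonicity $\partial_s\tilde X_N\geq 1$ I will upgrade this to a uniform $L^\infty$ bound on $\tilde X_N$. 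The compact embedding of $BV(0,1)\cap L^\infty(0,1)$ into $L^p(0,1)$ then yields the required spatial compactness, producing a further subsequence with $\tilde X_N\to X$ in $C([0,T];L^p(0,1))$ for $p\in[1,2)$, and $X\in H^1(0,T;L^2(0,1))$ by weak lower semicontinuity of $\|\partial_t\tilde X_N\|_{L^2_{t,s}}$. An analogous cell-wise computation yields $\|X_N-\tilde X_N\|^2_{L^2_s}\leq\tfrac{1}{3N}\sum_i(x_{i+1}-x_i)^2\leq\tfrac{(x_N-x_0)^2}{3N}=O(1/N)$ uniformly in $t$, so $X_N$ also converges to $X$ in $C([0,T];L^p(0,1))$.

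The main obstacle will be producing the pointwise $L^\infty$ bound on $\tilde X_N$ needed for the compact embedding: Proposition~\ref{prop:BV_est} controls only the \emph{diameter} $x_N(t)-x_0(t)$ of the particle cloud, not the particle locations themselves. The monotonicity of $\tilde X_N(t,\cdot)$ in $s$, together with the $L^\infty_t L^2_s$ bound, pins down that at least one endpoint is of order $1$ (otherwise the $L^2$ average of $\tilde X_N$ would blow up), after which the BV bound forces the other endpoint to be of order $1$ as well. Once this pointwise control is in hand, the remaining compactness extraction and the comparison of the two interpolants of the same nodal data are routine.
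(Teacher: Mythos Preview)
Your proof is correct and follows essentially the same strategy as the paper: weak compactness for $\tilde\Lambda_N$ from the $L^2_tH^1_s$ bound, strong compactness for $\tilde X_N$ from the $BV$ bound combined with the time-derivative bound, and explicit cell-wise comparison to show that the piecewise constant and piecewise linear interpolants have the same limit. The paper invokes Aubin--Lions where you use Ascoli--Arzel\`a, and it compares $X_N$ with $\tilde X_N$ in $L^1_s$ (then interpolates) where you go directly to $L^2_s$; these are cosmetic differences.

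Two small remarks. First, the separate $L^\infty$ bound you extract for $\tilde X_N(t,\cdot)$ is not actually needed: in one dimension a uniform bound on $\|\tilde X_N(t,\cdot)\|_{L^1_s}+|\partial_s\tilde X_N(t,\cdot)|(0,1)$ already controls $\|\tilde X_N(t,\cdot)\|_{L^\infty_s}$ and gives the compact embedding $BV(0,1)\hookrightarrow L^q(0,1)$ for every $q<\infty$, which is exactly what the paper uses. Second, your claim that ``$\Lambda_N\to\Lambda$ strongly in $L^2_{t,s}$'' is a slip: you only have $\Lambda_N-\tilde\Lambda_N\to 0$ strongly and $\tilde\Lambda_N\rightharpoonup\Lambda$ weakly, which yields $\Lambda_N\rightharpoonup\Lambda$ weakly, as the corollary states (and as you yourself note in parentheses).
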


	\begin{proof}[Proof of Corollary \ref{cor:sigma_cpct_1}]
		From Proposition \ref{prop:sigma_cpct_1}
 and ~\Cref{prop:BV_est}, we know that 
		\begin{itemize}
			\item $\tilde{X}_N$ is uniformly bounded in $L^\infty([0,T];\, BV([0,1]))$ and
			\item $\partial_t \tilde{X}_N$ is uniformly bounded in $L_{t,s}^2([0,T]\times [0,1]) = L_t^2(0,T; \, L_s^2(0,1))$.
		\end{itemize}
		Since the embedding $BV([0,1])\subset L^q(0,1)$ for any $q\in[1,\infty)$ is compact, we can apply the Aubin-Lions lemma and get the strong convergence of $\tilde{X}_N(t,s)$ to $X(t,s)$ (up to a subsequence).

To prove the strong convergence of $X_N(t,s)$, we compute (using \eqref{eq:BV_est}):
	\[
	\int_0^1|\tilde{X}_N(t,s) - X_N(t,s)|\,\mathrm{d}s = \frac{1}{2N}\sum_{i=0}^{N-1} |x_{i+1}(t)-x_i(t)| 
	 = \frac{1}{2N} (x_N(t)-x_0(t))
	\le \frac{C}{2N}e^{\|\phi''\|_{L^\infty}t}.
	\]
We  deduce that $\| \tilde X_N - X_N\|_{L^\infty(0,T,L^1((0,1))} \to 0$.
Since both $\tilde X_N$ and $X_N$ are bounded in $L^\infty_tL^2_s$, this strong convergence holds in $L^\infty_tL^p_s$ for all $p\in [1,2)$.

\medskip

Next, the bounds \eqref{eq:tau_cpct_sigma} imply the existence of  
$\Lambda\in L_{t,s}^2([0,T]\times [0,1])$ and $\tilde{\Lambda}\in L_t^2(0,T; \, H_s^1(0,1))$ such that $\Lambda_N \rightharpoonup \Lambda$ weakly in $L_{t,s}^2([0,T]\times[0,1])$ and $\tilde{\Lambda}_N \rightharpoonup \tilde{\Lambda}$ weakly in $L_t^2(0,T;\, H_s^1(0,1))$.
In order to prove that $\tilde{\Lambda} = \Lambda$, for any $s\in\left[\frac{i}{N},\frac{i+1}{N}\right)$ and $i=0,1,\dots, N-1$, we have
	\[
	|\tilde{\Lambda}_N(t,s) - \Lambda_N(t,s)| =\left| N\left(s-\frac{i+1}{N}\right)(\lambda_i(t)-\lambda_{i-1}(t))\right| \leq 
	 |\lambda_i(t) - \lambda_{i-1}(t)|.
	\]
We deduce
	\[
	|	\tilde{\Lambda}_N(t,s) - \Lambda_N(t,s)|^2 \le \frac{1}{N^2}|N(\lambda_i(t) - \lambda_{i-1}(t))|^2 \le \frac{1}{N}\left\{\frac{1}{N}\sum_{i=1}^N|N(\lambda_i(t) - \lambda_{i-1}(t))|^2\right\}.
	\]
	This inequality is true for almost every $s\in[0,1]$ and thus~\eqref{eq:tau_cpct} with~\eqref{x^0_phi} yields
	\begin{equation}
		\label{eq:lambda_close}
		\int_0^T\esssup_{s\in[0,1]}|	\tilde{\Lambda}_N(t,s) - \Lambda_N(t,s)|^2\, \mathrm{d}t \le \frac{1}{N}\int_0^T\left\{\frac{1}{N}\sum_{i=1}^N|N(\lambda_i(t) - \lambda_{i-1}(t))|^2\right\}\, \mathrm{d}t = \mathcal{O}\left(\frac{\bar{\phi}_N}{N}\right) \overset{N\to \infty}{\to}0.
	\end{equation}
	This estimate shows that $\Lambda = \tilde{\Lambda}$ for almost every $(t,s)\in[0,T]\times[0,1]$.
	\end{proof}

It remains to prove \Cref{prop:BV_est}, which is an immediate consequence of the following lemma:
\begin{lemma}
		\label{lem:omega_i_exp}
Let 	$\omega_i(t) := N(x_{i+1}(t)-x_{i}(t))$. Then 
		\[
		\omega_i(t) \le \omega_i(0)e^{\|\phi''\|_{L^\infty}t}, \quad \forall i=0,1,\dots, N-1, \quad \forall t\in[0,T].
		\] 
	\end{lemma}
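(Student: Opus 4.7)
The plan is to derive a differential inequality for $\omega_i$ on the (open) set where the constraint is inactive, and then apply Gronwall's inequality on each connected component. For $i=1,\dots,N-1$, differentiating $\omega_i(t)=N(x_{i+1}(t)-x_i(t))$ and using the ODE~\eqref{eq:EL_r} together with $\lambda_0=\lambda_N=0$ yields
\begin{equation*}
\dot\omega_i(t) = -N\bigl[\phi'(x_{i+1}(t))-\phi'(x_i(t))\bigr] - N^2\bigl[\lambda_{i+1}(t)-2\lambda_i(t)+\lambda_{i-1}(t)\bigr]
\end{equation*}
in $L^2(0,T)$. By the mean value theorem and~\eqref{phi}, the first term is bounded above by $c\,\omega_i(t)$, where $c:=\|\phi''\|_{L^\infty}$. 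The case $i=0$ is trivial since $\omega_0=N(x_1-x_0)=2$ is constant in time by the convention $x_0(t):=x_1(t)-2/N$.

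The key observation is that on the open set $A_i:=\{t\in[0,T]:\omega_i(t)>1\}$, the complementary slackness condition~\eqref{eq:slack_r} forces $\lambda_i(t)=0$ almost everywhere, while $\lambda_{i\pm 1}(t)\geq 0$ remain nonnegative. Substituting this into the identity above and discarding the nonpositive contribution $-N^2(\lambda_{i+1}+\lambda_{i-1})$ produces the pointwise a.e.\ inequality $\dot\omega_i(t)\leq c\,\omega_i(t)$ on $A_i$.

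Since $\omega_i\in H^1(0,T)\hookrightarrow C([0,T])$, the set $A_i$ decomposes into at most countably many open intervals. On each component $(a,b)\subset A_i$, Gronwall's inequality gives $\omega_i(t)\leq\omega_i(a)e^{c(t-a)}$ for all $t\in[a,b]$. If $a=0$, this is already the desired bound. Otherwise $a\in\partial A_i$, so by continuity $\omega_i(a)=1$, and therefore $\omega_i(t)\leq e^{c(t-a)}\leq e^{ct}\leq\omega_i(0)e^{ct}$ because $\omega_i(0)\geq 1$. Outside $A_i$ one simply has $\omega_i(t)=1\leq\omega_i(0)e^{ct}$, which completes the argument.

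The main subtlety lies not in any single calculation, but in reconciling the argument with the $H^1$/$L^2$ regularity provided by~\Cref{prop:exist}: the ODE identity for $\dot\omega_i$, the complementary slackness equality $\lambda_i(\omega_i-1)=0$, and the Gronwall step must all be interpreted pointwise almost everywhere, relying crucially on the continuity of $\omega_i$ to glue estimates across the boundary of $A_i$. Alternatively one could carry out the same maximum-principle reasoning on the JKO time-discretization of~\Cref{sec:tau_to_zero} before sending $\tau\to 0$, which sidesteps any measure-theoretic issues.
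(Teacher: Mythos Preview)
Your proof is correct and follows essentially the same approach as the paper: both arguments differentiate $\omega_i$, use the slackness condition~\eqref{eq:slack_r} to kill $\lambda_i$ on the set where $\omega_i>1$, discard the nonpositive term $-N^2(\lambda_{i+1}+\lambda_{i-1})$, and apply Gronwall on the resulting maximal interval, using continuity of $\omega_i$ to handle the endpoint. The only cosmetic difference is that the paper fixes a time $t_0$ and traces backward to the last contact time $t_*$, whereas you decompose the open set $A_i$ into components and argue on each; these are equivalent formulations of the same idea.
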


	\begin{proof}
	Recall the convention $x_0 = x_1 - \frac{2}{N}$, hence $\omega_0(t) = 2 = \omega_0(0)$. Fix any $i=1,\dots, N-1$ and $t_0\ge 0$. As mentioned before the statement of this lemma, \eqref{eq:slack_r} ensures $\omega_i(t)\ge 1$ for every $t\in[0,T]$. In the case that $\omega_i(t_0) = 1$, we are done because $\omega_i(0)\exp(\|\phi''\|_{L^\infty}t) \ge 1 = \omega_i(t_0)$. So we assume that $\omega_i(t_0)>1$. In view of \eqref{eq:tau_cpct}, we know that $t\mapsto \omega_i(t)$ is $C^{1/2}$, so there exists $t_*$ such that
	$\omega_i(t)>1$ for all $t\in(t_*,t_0]$ with either $\omega_i(t_*)=1$ or $t_*=0$.
	 The inequality $\omega_i>1$ is equivalent to $x_{i+1}-x_i>\frac{1}{N}$ on $(t_*,t_0]$, so condition \eqref{eq:slack_r} implies $\lambda_i(t) = 0$. Using~\eqref{eq:EL_r}, we have
		\begin{align*}
			\frac{d\omega_i}{dt} = N\left(\frac{dx_{i+1}}{dt} - \frac{dx_i}{dt}\right) &= -N\left( \phi'(x_{i+1}) - \phi'(x_i) \right) - N^2(\lambda_{i+1}\underbrace{-2\lambda_i}_{=0}+\lambda_{i-1}) \\
			 & = -\underbrace{N(x_{i+1}-x_i)}_{=\omega_i}\phi''(\xi_i) - N^2(\lambda_{i+1}+\lambda_{i-1}),\quad \text{for some }\xi_i \in (x_i, x_{i+1}).
		\end{align*}
and since the discrete pressures $\lambda_{i+1}$ and $\lambda_{i-1}$ are non-negative quantities,  we obtain the differential inequality
		\[
		\frac{d\omega_i}{dt} \le \|\phi''\|_{L^\infty}\,\omega_i, \quad \text{almost everywhere on } (t_*,t_0).
		\]
We deduce that $\omega_i(t_0) \leq \omega_i(t_*) e^{\|\phi''\|_{L^\infty}(t_0-t_*)} \leq  \omega_i(t_*) e^{\|\phi''\|_{L^\infty}t_0}$ with 
either $\omega_i(t_*)=\omega_i(0)$ or $\omega_i(t_*)=1<\omega_i(0)$. The result follows.
	\end{proof}
		\begin{proof}[Proof of~\Cref{prop:BV_est}]
Using \Cref{lem:omega_i_exp} and the fact that $\partial_s\tilde{X}_N(t,s)\geq 0$, we get
	\begin{align*}
		&\quad \int_0^1|\partial_s\tilde{X}_N(t,s)|\,\mathrm{d}s = \int_0^1\partial_s\tilde{X}_N(t,s)\,\mathrm{d}s = x_N(t) - x_0(t) \leq 	
	\left(x_N^0  - x_0^0\right)e^{\|\phi''\|_{L^\infty}t}
	\end{align*}
which is bounded uniformly in $N\in\N$ owing to~\eqref{bv_endpts} and the proof is complete.
	\end{proof}

		\medskip

			\subsection{Limit $N\to \infty$ of~\eqref{eq:EL_r}--\eqref{eq:lambdabc}}
	\label{sec:EL}
We are now ready to pass to the limit $N\to \infty$ and derive \eqref{ode:1} and \eqref{ode:2}.
First we have
	\begin{proposition}
		\label{thm:char_eq}
		The pair $(X,\Lambda)$ from~\Cref{cor:sigma_cpct_1} solves~\eqref{ode:1} with $X(0,s) = X^0(s)$ for a.e. $s\in[0,1]$.
	\end{proposition}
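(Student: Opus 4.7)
The plan is to pass to the limit $N\to\infty$ directly in the discrete identity \eqref{eq:EL_sigma}. The compactness produced by Corollary \ref{cor:sigma_cpct_1} already provides every convergence needed: strongly $X_N\to X$ in $C([0,T];L^p(0,1))$ for $p\in[1,2)$, weakly $\partial_t X_N \rightharpoonup \partial_t X$ in $L^2_{t,s}$ (as a byproduct of the $H^1_t L^2_s$ bound in Proposition \ref{prop:sigma_cpct_1}), and weakly $\partial_s \tilde\Lambda_N \rightharpoonup \partial_s \Lambda$ in $L^2_{t,s}$.

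The only nontrivial step is the nonlinear term $\phi'(X_N)$. Because \eqref{phi} gives $\phi'$ globally Lipschitz with constant $c_2$, one has
\[
\|\phi'(X_N)-\phi'(X)\|_{L^1_{t,s}}\leq c_2\,\|X_N-X\|_{L^1_{t,s}}\xrightarrow{N\to\infty}0,
\]
and moreover $\phi'(X_N)$ is bounded in $L^\infty_t L^2_s$ via $|\phi'(x)|\leq c_2(1+|x|)$ and the $L^\infty_t L^2_s$ bound on $X_N$. This is enough to pass to the limit against any test function $\eta\in C_c^\infty((0,T)\times(0,1))$, so testing \eqref{eq:EL_sigma} against such $\eta$ and sending $N\to\infty$ produces $\partial_t X = -\phi'(X) - \partial_s\Lambda$ in $\mathcal{D}'((0,T)\times(0,1))$; since each individual term lies in $L^2_{t,s}$, the identity is valid a.e.

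For the homogeneous boundary conditions, I would use that $\tilde\Lambda_N \rightharpoonup \Lambda$ in $L^2_t(0,T;H^1_s(0,1))$ and that the trace map $H^1(0,1)\to\mathbb{R}$ at $s=0$ and $s=1$ is bounded and linear, hence weakly continuous. Testing $\tilde\Lambda_N(t,0)=\tilde\Lambda_N(t,1)=0$ against arbitrary $\zeta\in L^2(0,T)$ and passing to the limit gives $\Lambda(t,0)=\Lambda(t,1)=0$ for a.e.\ $t\in[0,T]$.

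Finally, for the initial data, the uniform $H^1_t L^2_s$ bound on $X_N$ and the Aubin--Lions argument used in Corollary \ref{cor:sigma_cpct_1} actually yield $X_N\to X$ in $C([0,T];L^p(0,1))$, $p\in[1,2)$, so in particular $X_N(0,\cdot)\to X(0,\cdot)$ in $L^1(0,1)$. Combined with the assumption \eqref{eq:x^0_conv} that $X_N(0,\cdot)=X_N^0 \to X^0$ in $L^1(0,1)$, uniqueness of the $L^1$ limit yields $X(0,\cdot)=X^0$. The main obstacle I anticipate is simply bookkeeping the function spaces in which each convergence holds so that products like $\phi'(X_N)\cdot \eta$ and the trace evaluations are genuinely justified; no single step is deep once the compactness statements of Corollary \ref{cor:sigma_cpct_1} are available.
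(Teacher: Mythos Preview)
Your proposal is correct and follows essentially the same route as the paper's proof: pass to the limit in \eqref{eq:EL_sigma} using the weak $L^2_{t,s}$ convergences for $\partial_t X_N$ and $\partial_s\tilde\Lambda_N$, handle the nonlinear term $\phi'(X_N)$ via the Lipschitz bound on $\phi'$ and the strong convergence of $X_N$, obtain the boundary condition from the weak closedness of $L^2_t H^1_{0,s}$ (equivalently, weak continuity of the trace), and recover the initial datum from the $C([0,T];L^1)$ convergence combined with \eqref{eq:x^0_conv}. The paper phrases the boundary step slightly differently (it observes directly that $\tilde\Lambda_N$ is bounded in $L^2(0,T;H^1_0(0,1))$, a weakly closed subspace), but this is the same argument as your trace-map formulation.
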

	\begin{proof}
		We   obtain~\eqref{ode:1} by passing to the limit $N\to \infty$ in~\eqref{eq:EL_sigma}. 
We recall that at this point \Cref{cor:sigma_cpct_1} gives convergence as $N\to\infty$  along some subsequence.
		\medskip
		
				In view of~\Cref{cor:sigma_cpct_1}, 
	we can pass to the limit in all the terms in the continuity equation in the sense of distributions (and in weak $L^2$) except for the non linear term $\phi'(X_N)$.
	The growth assumption~\eqref{phi} says that $|\phi'(X)|\le c_2(1 + |X|)$. Since $X \in L_{t,s}^2$, we deduce that $\phi'(X) \in L_{t,s}^2$. Moreover, we have
$$
|\phi'(X_N(t,s)) - \phi'(X(t,s))|  \leq c_2 |X_N(t,s) - X(t,s)|
$$
and so the strong convergence of $X_N$ to $X$ in $L^\infty(0,T;L^1(0,1))$ implies
the strong convergence of $\phi'(X_N)$ to $\phi'(X)$ in $L^\infty(0,T;L^1(0,1))$.

\medskip

 The boundary conditions on $\tilde{\Lambda}_N$ in \eqref{eq:EL_sigma} implies that $\tilde{\Lambda}_N$ is bounded in $L^2(0,T;H^1_0(0,1))$ and so its weak limit  $\Lambda$ also lies in $L^2(0,T;H^1_0(0,1))$ and thus satisfies the boundary conditions.

		\medskip
		
		Recall that $X_N(0,s) = X_N^0(s) \to X^0 $ in $L^1(0,1)$. Since the convergence of $X_N$ holds in $C([0,T]; \, L^1(0,1))$, we recover $X(0,s) =X^0(s)$.
	\end{proof}

	
%

Next, we want to derive \eqref{ode:2} by passing to the limit in \eqref{eq:slack_sigma} which we recall here:
	\[
	\Lambda_N(1 - \partial_s\tilde{X}_N)=0, \text{ a.e. }(t,s)\in[0,T]\times [0,1].
	\]
We note that ~\Cref{cor:sigma_cpct_1} implies that $\Lambda_N \rightharpoonup \Lambda$ weakly in $L_{t,s}^2([0,T]\times [0,1])$ and $\tilde{X}_N \rightharpoonup X$ weakly in $H_t^1(0,T;\, L_s^2(0,1))$. This information alone is clearly not enough to pass to the limit $N\to \infty$ in~\eqref{eq:slack_sigma}. Nevertheless, we can prove:
	\begin{proposition}[$\lim_{N\to \infty}\eqref{eq:slack_r} = \eqref{ode:2}$]
\label{lem:slack}
The pair $(X,\Lambda)$ from~\Cref{cor:sigma_cpct_1} solves~\eqref{ode:2}, i.e.
\[
\Lambda(t,s)(1 - \partial_sX(t,s))=0 \mbox{ a.e. } (t,s)\in[0,T]\times[0,1].
\]
	\end{proposition}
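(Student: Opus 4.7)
The sign conditions $\Lambda\ge 0$ and $\partial_s X\ge 1$ pass easily to the limit. From $\Lambda_N\ge 0$ and the weak $L^2_{t,s}$ convergence $\Lambda_N\rightharpoonup \Lambda$ we get $\Lambda\ge 0$ a.e. Since $\partial_s\tilde X_N\ge 1$ pointwise (this is \eqref{eq:slack_sigma}) and $\tilde X_N\to X$ strongly in $L^1_{t,s}$, an integration by parts against any non-negative $\varphi\in C^\infty_c((0,T)\times(0,1))$ yields $\partial_s X\ge 1$ in the sense of distributions, hence as a Radon measure.

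The core task is to pass to the limit in the exact identity
$$
0=\int_0^T\!\!\int_0^1 \Lambda_N(1-\partial_s\tilde X_N)\,\mathrm{d}s\,\mathrm{d}t.
$$
The product $\Lambda_N\partial_s\tilde X_N$ is a weak-weak product in $L^2$, so I first swap $\Lambda_N$ for $\tilde{\Lambda}_N$. Using the BV bound \eqref{eq:BV_est} to control $\partial_s\tilde X_N$ in $L^\infty_t L^1_s$ and the decay \eqref{eq:lambda_close} of $\|\Lambda_N-\tilde{\Lambda}_N\|_{L^2_tL^\infty_s}$, H\"older gives
$$
\left|\int_0^T\!\!\int_0^1(\Lambda_N-\tilde{\Lambda}_N)\partial_s\tilde X_N\,\mathrm{d}s\,\mathrm{d}t\right|\le \|\partial_s\tilde X_N\|_{L^\infty_tL^1_s}\sqrt{T}\,\|\Lambda_N-\tilde{\Lambda}_N\|_{L^2_tL^\infty_s}\to 0.
$$
Since $\tilde{\Lambda}_N(t,0)=\tilde{\Lambda}_N(t,1)=0$, an integration by parts in $s$ transforms the identity into
$$
\int_0^T\!\!\int_0^1\Lambda_N\,\mathrm{d}s\,\mathrm{d}t + \int_0^T\!\!\int_0^1 \partial_s\tilde{\Lambda}_N\cdot \tilde X_N\,\mathrm{d}s\,\mathrm{d}t = o(1).
$$

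Now weak $L^2$ convergence of $\Lambda_N$ handles the first term. The second is a weak-strong pairing provided $\tilde X_N\to X$ strongly in $L^2_{t,s}$. Only strong convergence in $L^p_{t,s}$ for $p<2$ is recorded in Corollary~\ref{cor:sigma_cpct_1}, but the BV bound \eqref{eq:BV_est} combined with the $L^\infty_tL^2_s$ bound of Proposition~\ref{prop:sigma_cpct_1} forces a uniform $L^\infty_{t,s}$ bound on $\tilde X_N$: for each $t$, Chebyshev produces some $s^\ast\in[0,1]$ with $|\tilde X_N(t,s^\ast)|$ controlled by $\|\tilde X_N(t,\cdot)\|_{L^2_s}$, and the oscillation bound $|\tilde X_N(t,s)-\tilde X_N(t,s^\ast)|\le \|\partial_s\tilde X_N\|_{L^1_s}$ does the rest. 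Interpolation upgrades the strong convergence to all $L^q_{t,s}$ with $q<\infty$, and so, using $\partial_s\tilde{\Lambda}_N\rightharpoonup \partial_s\Lambda$ weakly in $L^2_{t,s}$, we obtain in the limit
$$
\int_0^T\!\!\int_0^1 \Lambda\,\mathrm{d}s\,\mathrm{d}t + \int_0^T\!\!\int_0^1 \partial_s\Lambda\cdot X\,\mathrm{d}s\,\mathrm{d}t = 0.
$$

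It remains to interpret this identity. For a.e.\ $t$, $\Lambda(t,\cdot)\in H^1_0(0,1)$ is continuous and vanishes at the endpoints, while $X(t,\cdot)\in BV(0,1)$ with $\partial_s X$ a non-negative Radon measure. The distributional integration by parts gives $\int_0^1\partial_s\Lambda\cdot X\,\mathrm{d}s=-\int_0^1\Lambda\,\mathrm{d}(\partial_s X)$ (no boundary contribution), so the identity becomes $\int_0^T\!\!\int_0^1 \Lambda\,[\mathrm{d}s-\mathrm{d}(\partial_s X)]\,\mathrm{d}t=0$. Decomposing $\partial_s X=(\partial_s X)_{ac}\,\mathrm{d}s+(\partial_s X)_{\mathrm{sing}}$ with $(\partial_s X)_{ac}\ge 1$ a.e.\ and $(\partial_s X)_{\mathrm{sing}}\ge 0$, this reads
$$
\int_0^T\!\!\int_0^1 \Lambda\bigl((\partial_s X)_{ac}-1\bigr)\,\mathrm{d}s\,\mathrm{d}t + \int_0^T\!\!\int_0^1 \Lambda\,\mathrm{d}(\partial_s X)_{\mathrm{sing}}\,\mathrm{d}t = 0.
$$
Both integrands are non-negative, so both integrals vanish; the first identity is precisely \eqref{ode:2}, as desired. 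The main obstacle is the weak-weak product $\Lambda_N\partial_s\tilde X_N$ combined with the possibly singular nature of $\partial_s X$ in the limit, which are handled respectively by the integration-by-parts trick at the discrete level and the measure-theoretic interpretation of \eqref{ode:2}.
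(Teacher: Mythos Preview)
Your proof is correct and follows essentially the same route as the paper: replace $\Lambda_N$ by $\tilde\Lambda_N$ at an $o(1)$ cost using the BV bound and \eqref{eq:lambda_close}, integrate by parts via the zero boundary values of $\tilde\Lambda_N$, pass to the limit in the weak-strong pairing, and undo the integration by parts. You add two refinements the paper leaves implicit---upgrading the strong convergence of $\tilde X_N$ to $L^2_{t,s}$ via an $L^\infty$ bound, and handling a possible singular part of $\partial_s X$---but the core argument is identical.
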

	\begin{proof} 
First, we claim that we can replace $\Lambda_N(t,s)$ by $\tilde \Lambda_N(t,s)$ in \eqref{eq:slack_sigma} by writing
			\begin{equation}\label{jgdf}
		\int_0^T\int_0^1 \tilde{\Lambda}_N(t,s)(1 - \partial_s\tilde{X}_N(t,s))\, \mathrm{d}s\,\mathrm{d}t = \int_0^T\int_0^1 (\tilde{\Lambda}_N(t,s) - \Lambda_N(t,s))(1 - \partial_s\tilde{X}_N(t,s))\,\mathrm{d}s\,\mathrm{d}t = \mathcal{O}\left(\frac{1}{\sqrt{N}}\right).
		\end{equation}
		The first equality is a direct consequence of~\eqref{eq:slack_sigma}, and since 
		\[
		\sup_{N\in\N}\|\partial_s \tilde{X}_N\|_{L_t^\infty(0,T;\, L_s^1(0,1))}<+\infty \text{ and }\|\tilde{\Lambda}_N - \Lambda_N\|_{L_t^2(0,T;\, L_s^\infty(0,1))}^2 = \mathcal{O}\left(\frac{\bar{\phi}_N}{N}\right),
		\]
	we can write:
		\[
			 \left| \int_0^T\int_0^1 (\tilde{\Lambda}_N - \Lambda_N)(1 - \partial_s\tilde{X}_N)\,\mathrm{d}s\,\mathrm{d}t \right|
\le  \|1 - \partial_s\tilde{X}_N\|_{L_t^2L_s^1}  \|\tilde{\Lambda}_N - \Lambda_N\|_{L_t^2L_s^\infty}\le C \sqrt{\frac{\bar{\phi}_N}{N}}.
		\]
Using~\eqref{jgdf} and the fact that $\tilde\Lambda_N(t,0)=\tilde\Lambda_N(t,1)=0$, we can integrate by parts to write:
	\[
		\mathcal{O}\left(\frac{1}{\sqrt{N}}\right)  =\int_0^T\int_0^1 \tilde{\Lambda}_N(1 - \partial_s\tilde{X}_N)\, \mathrm{d}s\,\mathrm{d}t =		 \int_0^T\int_0^1\tilde{\Lambda}_N\ \, \mathrm{d}s \, \mathrm{d}t + \int_0^T\int_0^1 \partial_s\tilde{\Lambda}_N\ \tilde{X}_N\\, \mathrm{d}s\, \mathrm{d}t.
	\]
\Cref{cor:sigma_cpct_1} allows passing to the limit $N\to \infty$ and then integration by parts (using the homogeneous boundary conditions for $\Lambda$) yield
	\[
	0 = \int_0^T\int_0^1 \Lambda(t,s) \, \mathrm{d}s \, \mathrm{d}t + \int_0^T\int_0^1 \partial_s \Lambda \, X \, \mathrm{d}s \, \mathrm{d}t = \int_0^T\int_0^1\Lambda ( 1 - \partial_s X)\, \mathrm{d}s \, \mathrm{d}t.
	\]
	The limits satisfy $\Lambda\ge 0$ and $\partial_s X \ge 1$ (since $\Lambda_N\ge 0$ and $\partial_s \tilde{X}_N \ge 1$) for almost every $(t,s)\in[0,T]\times[0,1]$, and so this integral equality implies~\eqref{ode:2}.
	\end{proof}

\section{Uniqueness for the Lagrangian system \eqref{ode:1}--\eqref{ode:2}}
\label{sec:uniqueness}
\Cref{thm:char_eq} and~\Cref{lem:slack} almost imply \Cref{thm:q1}. The only missing part is the convergence of the entire sequences. This will follow  from the uniqueness part of~\Cref{thm:wp_micro} which we prove below:
\begin{proposition}[Uniqueness principle for \eqref{ode:1}--\eqref{ode:2}]
\label{prop:unique_micro}
Let $\phi$ and $X^0$ satisfy~\eqref{phi} and~\eqref{x^0}, respectively. For $i=1,2$, let $X^i\in H^1(0,T;\,L^2(0,1))$ and $\Lambda^i \in L^2(0,T;\, H^1(0,1))$ be solutions to~\eqref{ode:1} and~\eqref{ode:2} with $X^i(0,s) = X^0(s)$. Then, $X^1 = X^2$ and $\Lambda^1 = \Lambda^2$.
\end{proposition}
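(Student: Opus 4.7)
The plan is a standard monotonicity/Gr\"onwall argument, where the key leverage comes from the complementarity conditions in \eqref{ode:2}. I would set $Y:=X^1-X^2$ and $M:=\Lambda^1-\Lambda^2$. Subtracting the two copies of \eqref{ode:1} gives
\[
\partial_t Y = -(\phi'(X^1)-\phi'(X^2)) - \partial_s M,
\]
with $Y(0,\cdot)=0$ and $M(t,0)=M(t,1)=0$. I would then test against $Y$ in $L^2(0,1)$ to obtain the energy identity
\[
\tfrac12 \frac{d}{dt}\|Y(t,\cdot)\|_{L^2(0,1)}^2 = -\int_0^1 (\phi'(X^1)-\phi'(X^2))\,Y\,ds - \int_0^1 \partial_s M\cdot Y\,ds.
\]

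The first term on the right is controlled by $\|\phi''\|_{L^\infty}\|Y\|_{L^2(0,1)}^2$ via the mean value theorem and \eqref{phi}. For the second term, the regularity $\Lambda^i\in L^2(0,T;H^1(0,1))$ together with the homogeneous boundary values allows an integration by parts, which (thanks to vanishing boundary terms) produces
\[
-\int_0^1 \partial_s M\cdot Y\,ds = \int_0^1 M\,\partial_s Y\,ds = \int_0^1 (\Lambda^1-\Lambda^2)(\partial_s X^1-\partial_s X^2)\,ds.
\]
The crucial step is to show that this last quantity is $\le 0$. Expanding and substituting $\Lambda^i\partial_s X^i=\Lambda^i$ (which follows from the complementarity condition $\Lambda^i(1-\partial_s X^i)=0$) yields
\[
\int_0^1 (\Lambda^1-\Lambda^2)(\partial_s X^1-\partial_s X^2)\,ds = \int_0^1 \Lambda^1(1-\partial_s X^2)\,ds + \int_0^1 \Lambda^2(1-\partial_s X^1)\,ds \le 0,
\]
since $\Lambda^i\ge 0$ and $\partial_s X^j\ge 1$ a.e. This is the standard monotonicity of the polar-cone normal vectors, and is exactly where the incompressibility constraint is used.

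Combining these two estimates gives
\[
\tfrac12 \frac{d}{dt}\|Y(t,\cdot)\|_{L^2(0,1)}^2 \le \|\phi''\|_{L^\infty}\,\|Y(t,\cdot)\|_{L^2(0,1)}^2,
\]
so Gr\"onwall together with $Y(0,\cdot)=0$ forces $X^1=X^2$ a.e.\ on $[0,T]\times[0,1]$. Plugging this back into \eqref{ode:1} yields $\partial_s(\Lambda^1-\Lambda^2)=0$ in $\mathcal{D}'$, and the Dirichlet boundary conditions $\Lambda^i(t,0)=0$ then give $\Lambda^1=\Lambda^2$. The only delicate point I would verify carefully is that $\partial_s X^i$ is regular enough (at least an $L^1_s$ function a.e.\ in $t$) to interpret \eqref{ode:2} pointwise and carry out the integration by parts; this is guaranteed by the remark following \Cref{thm:wp_micro} (giving $1\le \partial_sX\le (\partial_sX^0)e^{\|\phi''\|_\infty t}$), so the whole computation is rigorous.
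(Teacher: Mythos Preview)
Your proof is correct and follows essentially the same approach as the paper's: differentiate $\tfrac12\|X^1-X^2\|_{L^2}^2$, bound the $\phi'$ term via the mean value theorem, integrate by parts in the pressure term using the zero boundary values, show the resulting integral is $\le 0$ via the complementarity conditions, and conclude with Gr\"onwall. Your algebraic verification of the sign (rewriting $(\Lambda^1-\Lambda^2)(\partial_sX^1-\partial_sX^2)=\Lambda^1(1-\partial_sX^2)+\Lambda^2(1-\partial_sX^1)$) is slightly cleaner than the paper's set-partition argument, but the substance is identical.
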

\begin{proof} 
	We differentiate in time the $L_s^2$ difference between $X^1$ and $X^2$ to write
\begin{align*}
	\frac{d}{dt}\frac{1}{2}\int_0^1|X^1 - X^2|^2\,\mathrm{d}s &= - \int_0^1(\phi'(X^1)-\phi'(X^2))(X^1-X^2)\, \mathrm{d}s - \int_0^1(\partial_s\Lambda^1 - \partial_s\Lambda^2)(X^1 - X^2)\, \mathrm{d}s 	\\
	&=:I_1 + I_2.
\end{align*}
Beginning with $I_1$, we use~\eqref{phi} and the mean value theorem to estimate
\[
|I_1| \le c_2\int_0^1|X^1-X^2|^2\,\mathrm{d}s.
\]
As for $I_2$, we integrate by parts and use the trace-zero boundary condition of $\Lambda^1$ and $\Lambda^2$ to get
\[
I_2 = \int_0^1(\Lambda^1 - \Lambda^2)(\partial_sX^1 - \partial_s X^2)\, \mathrm{d}s.
\]
Recall that $\Lambda^1\ge 0$ and $\Lambda^2\ge 0$ for every $s\in[0,1]$. The unit interval can be partitioned into two subsets
\[
[0,1] = \left\{
s\in[0,1]\, : \, \Lambda^1(t,s)\Lambda^2(t,s) >0
\right\} \sqcup \left\{
s\in [0,1]\, : \, \Lambda^1(t,s)\Lambda^2(t,s) = 0
\right\}=:A\sqcup B.
\]
By analysing both sets $A$ and $B$, the saturation condition~\eqref{ode:2} gives the sign $I_2\le 0$.
%
Returning to the time derivative of the $L_s^2$ difference between $X^1$ and $X^2$, we see that
\[
\frac{d}{dt}\frac{1}{2}\int_0^1|X^1 - X^2|^2\, \mathrm{d}s = I_1 + I_2 \le c_2\int_0^1|X^1 - X^2|^2 \, \mathrm{d}s.
\]
Gr\"onwall's inequality yields $X^1 \equiv X^2$ which, when substituted back into~\eqref{ode:1}, implies $\Lambda^1 \equiv \Lambda^2$.
\end{proof}

\section{Proof of \Cref{thm:wp_micro}}
	\label{sec:q1}
The first part of  \Cref{thm:wp_micro} follows from what we have already proved:
Given $X^0$ satisfying \eqref{x^0}, we can proceed as in Lemma \ref{lem:sample_rho^0} to construct a sequence of discrete initial conditions $X_N^0$ satisfying \eqref{x^0_phi}, 
\eqref{bv_endpts}, and \eqref{eq:x^0_conv}. \Cref{thm:q1} gives the existence of the unique solution to \eqref{ode:1} and~\eqref{ode:2}.

\medskip

 We thus turn to the second part of the theorem: we recall that $(\rho,p)$ are related to $(X,\Lambda)$ as follows.
\begin{itemize}	
	\item $\rho(t,\mathrm{d}x)$ is defined as the push-forward through $X(t,\cdot)$ of the uniform measure on $[0,1]$ according to~\eqref{eq:push_forward}:
$$		\rho(t,\cdot) := X(t,\cdot)_\# (\mathrm{d}s ).$$
	\item The pressure $p$ is defined in term of $\Lambda$ and $X$ by 
	~\eqref{def:pressure}: For every $t\in[0,T]$, we denote by $S(t,\cdot)$ the inverse to $s\mapsto X(t,s)$. This inverse is defined on $[X(t,0),X(t,1)]$ but can be extended to $\R$ by setting $S(t,x)=0$ for $x<X(t,0)$, $S(t,x)=1$ for $x>X(t,1)$. Owing to~\eqref{ode:2}, $S(t,\cdot)$ is non-decreasing and $1$--Lipschitz uniformly in $t\in[0,T]$. We invert~\eqref{def:pressure} by defining
	\begin{equation}
		\label{eq:macro_press}
	p(t,x):= \Lambda(t,S(t,x)),\quad \forall (t,x)\in[0,T]\times \R.
	\end{equation}
\end{itemize}
\begin{proof}[Proof of~\Cref{thm:wp_micro}]
It only remains to prove that  the pair of functions $(\rho,p)$ defined from $(X,\Lambda)$ by~\eqref{eq:push_forward} and~\eqref{eq:macro_press} is a weak solution 
			to~\eqref{eq:macro} with initial condition $\rho(0,\cdot) = \rho^0:=X^0(\cdot)_\# (\mathrm{d}s ).$ Recall from~\cite{DMM16} that weak solutions to~\eqref{eq:macro} are unique.
\medskip

First,
by the change of variables formula, we know that $\rho(t,\,X(t,s))  = \frac{1}{\partial_s X(t,s)}$, for $s\in[0,1]$. From this and~\eqref{ode:2}, we immediately obtain the saturation condition and density constraint
\[
p(1-\rho) = 0, \quad \text{and}\quad 0 \le \rho \le 1 \quad \text{a.e. }t\in[0,T],x\in\R.
\]

\medskip

Next, we show that $(\rho,p)\in AC([0,T];\,\mathscr{P}_2(\R))\times L^2(0,T;\, H^1(\R))$: It is well-known (c.f.~\cite[Proposition 2.17]{S15}) that \eqref{eq:push_forward} implies
\[
W_2^2(\rho(t_1),\rho(t_2)) = \|X(t_2,\cdot) - X(t_1,\cdot)\|_{L^2(0,1)}^2 \qquad \mbox{for any $0\le t_1<t_2\le T$  .}
\]
Due to Minkowski's integral inequality and the fact that $X\in H^1(0,T;\, L^2(0,1))$, we can write
\[
W_2(\rho(t_1),\rho(t_2))\le \int_{t_1}^{t_2}\underbrace{\|\partial_tX(t,\cdot)\|_{L^2(0,1)}}_{\in L^2(0,T)}\, \mathrm{d}t
\]
which gives the desired regularity for $\rho$.
Turning to the pressure, recall that $\Lambda \in L^2(0,T;\,H^1(\R))$. We use the saturation condition, \eqref{eq:push_forward}, and~\eqref{eq:macro_press} to see
\[
\int_0^T\int_\R |p(t,x)|^2 \, \mathrm{d}x \,\mathrm{d}t = \int_0^T\int_\R |p(t,x)|^2 \rho(t,x) \, \mathrm{d}x \,\mathrm{d}t = \int_0^T\int_0^1|\Lambda(t,s)|^2\, \mathrm{d}s \, \mathrm{d}t <+\infty.
\]
As for the spatial derivative, the chain rule applied to~\eqref{eq:macro_press} yields
\[
\partial_x p(t,x) = \pa_x S(t,x) \partial_s\Lambda(t,S(t,x))
\]
and, using the fact that $0\leq \pa_x S(t,x) \leq 1$, we can write 
\begin{align*}
\int_0^T\int_\R |\partial_x p|^2\,\mathrm{d}x \,\mathrm{d}t &= \int_0^T\int_\R |\pa_x S(t,x)|^2 |\partial_s\Lambda (t,S(t,x))|^2\,\mathrm{d}x\,\mathrm{d}t 	\\
&\le \int_0^T\int_\R \pa_x S(t,x) |\partial_s\Lambda (t,S(t,x))|^2\,\mathrm{d}x\,\mathrm{d}t = \int_0^T\int_0^1|\partial_s\Lambda(t,s)|^2 \,\mathrm{d}s\mathrm{d}t < +\infty.
\end{align*}
Turning the evolution equation, let us fix $\psi \in C_c^\infty(\R)$ and consider, with the help of the dominated convergence theorem,
\[
\frac{d}{dt}\int_\R \psi(x)\rho(t,x)\, \mathrm{d}x = \frac{d}{dt}\int_0^1 \psi(X(t,s))\, \mathrm{d}s = \int_0^1\psi'(X(t,s))\,\partial_tX(t,s)\, \mathrm{d}s.
\]
Using the differential equation~\eqref{ode:1} for $(X,\Lambda)$, integration by parts, and~\eqref{eq:push_forward} with~\eqref{eq:macro_press} recovers~\eqref{eq:wk_test}.
\end{proof}

\section{Convergence in Eulerian approach}
\label{sec:q2}
The goal of this section is to prove~\Cref{thm:q2}. Throughout this section, we fix $\rho^0$ satisfying~\eqref{rho^0}, and let $X^0$ and $\mathbf{X}_N^0$ be as given in ~\Cref{lem:sample_rho^0}.

We have already seen how to construct the density distribution $\rho_N(t,x)$ and $\tilde\rho_N(t,x)$ from $\mathbf{X}_N(t)$ by \eqref{eq:empirical} and \eqref{eq:histr}.
From the $\lambda_i(t)$ constructed in~\Cref{prop:exist}, we define its Eulerian counterpart, namely the pressure variable. Its piecewise constant version is given by
\begin{equation}
	p_N(t,x) 	:= \sum_{i=0}^{N-1}\lambda_i(t)\chi_{[x_i(t),x_{i+1}(t))}(x), \label{eq:disc_press_const2}
\end{equation}
recalling $x_0 = x_1 - \frac{2}{N}$, and the piecewise linear version by
\begin{align}
	\tilde{p}_N(t,x) &:= N(\lambda_i(t) - \lambda_{i-1}(t))\left( x - \left(x_i(t) - \frac{1}{2N}\right) \right) + \lambda_{i-1}(t), \label{eq:disc_press_lin}	\\
	\text{for every }x&\in \left[x_i(t) - \frac{1}{2N}, \, x_i(t) + \frac{1}{2N}\right] \text{ and } i=1,\dots, N. \notag
\end{align}
With these definitions in hand, we begin with the Eulerian analogue of~\eqref{eq:EL_sigma} and~\eqref{eq:slack_sigma}.
\begin{proposition}
\label{prop:disc_wk_macro}
Let $(\mathbf{X}_N,\mathbf{\Lambda}_N)$ be described as in~\Cref{prop:exist} and inherit all the assumptions therein. 
Let $\rho_N(t,x)$ be the empirical distribution \eqref{eq:empirical} and $\tilde\rho_N(t,x)$ be the density \eqref{eq:histr}. 
With $p_N$ defined above, we have the following statements.
\begin{enumerate}
	\item The pair $(\tilde{\rho}_N,p_N)$ satisfies
		\begin{equation}\label{wk_sat_disc} 
	p_N(1 - \tilde{\rho}_N) = 0, \quad p_N\ge 0, \text{ and }\,0 \le \tilde{\rho}_N\le 1 \mbox{ a.e. } (t,s)\in[0,T]\times[0,1]. 
	\end{equation}
	\item   The pair $(\rho_N,p_N)$ satisfies  the following PDE holds in the distributional sense
		\begin{equation}
			\label{eq:wk_test_cty_disc}
			\partial_t\rho_N = \partial_x(\rho_N\partial_x\phi) + \partial_{xx} p_N.
		\end{equation}

\end{enumerate}
\end{proposition}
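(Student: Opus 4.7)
\medskip

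\noindent\textbf{Plan for the proof of Proposition \ref{prop:disc_wk_macro}.}

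For part (1), I would unpack the definitions directly. By construction $\tilde\rho_N(t,x) = \frac{1}{N(x_{i+1}(t)-x_i(t))}$ on $[x_i(t),x_{i+1}(t))$, so the non-overlapping condition \eqref{non_overlap} (which, extended to $i=0$ via the convention $x_0=x_1-\tfrac{2}{N}$, says $N(x_{i+1}-x_i)\geq 1$) gives $0\le \tilde\rho_N\le 1$ immediately. Similarly $p_N=\lambda_i\geq 0$ on each interval by \eqref{eq:slack_r}. The saturation identity is then just a pointwise restatement of the complementary slackness in \eqref{eq:slack_r}: on $[x_i(t),x_{i+1}(t))$ with $1\le i\le N-1$, $p_N(t,x)(1-\tilde\rho_N(t,x))=\lambda_i(t)\bigl(1-\tfrac{1}{N(x_{i+1}(t)-x_i(t))}\bigr)$, and \eqref{eq:slack_r} says this product vanishes; on the leftmost cell $[x_0,x_1)$ we have $\lambda_0\equiv 0$ by \eqref{eq:lambdabc}, so $p_N=0$ there regardless.

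For part (2), the strategy is to test against an arbitrary $\psi\in C_c^\infty((0,T)\times\R)$ and reduce everything to \eqref{eq:EL_r} via a discrete integration by parts. Since $\rho_N(t)=\tfrac{1}{N}\sum_{i=1}^N \delta_{x_i(t)}$, I compute
\[
\frac{d}{dt}\int_\R \psi(t,x)\,\rho_N(t,\mathrm{d}x) = \frac{1}{N}\sum_{i=1}^N\bigl(\partial_t\psi(t,x_i(t)) + \partial_x\psi(t,x_i(t))\,\dot x_i(t)\bigr),
\]
and substitute $\dot x_i=-\phi'(x_i)-N(\lambda_i-\lambda_{i-1})$ from \eqref{eq:EL_r}. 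Integrating in $t$ kills the left-hand side because $\psi$ is compactly supported in time, leaving the identity
\[
\int_0^T\!\!\int_\R \partial_t\psi\,\rho_N\,\mathrm{d}x\,\mathrm{d}t = \int_0^T\!\!\int_\R \partial_x\psi\,\phi'(x)\,\rho_N\,\mathrm{d}x\,\mathrm{d}t + \int_0^T\sum_{i=1}^N \partial_x\psi(t,x_i(t))\bigl(\lambda_i(t)-\lambda_{i-1}(t)\bigr)\mathrm{d}t.
\]

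The key step is the Abel summation in the last sum. Using $\lambda_0=\lambda_N=0$,
\[
\sum_{i=1}^N \partial_x\psi(x_i)(\lambda_i-\lambda_{i-1}) = -\sum_{i=1}^{N-1}\lambda_i\bigl(\partial_x\psi(x_{i+1})-\partial_x\psi(x_i)\bigr) = -\sum_{i=1}^{N-1}\lambda_i\int_{x_i}^{x_{i+1}}\partial_{xx}\psi(x)\,\mathrm{d}x,
\]
which, since $\lambda_0=0$ lets us add the $i=0$ term freely, equals $-\int_\R p_N(t,x)\,\partial_{xx}\psi(t,x)\,\mathrm{d}x$ by the definition \eqref{eq:disc_press_const2}. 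Substituting back yields exactly the distributional form of \eqref{eq:wk_test_cty_disc}.

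I do not anticipate a genuine obstacle here: both parts are essentially bookkeeping, the only subtle point being the consistent use of the boundary conventions $\lambda_0=\lambda_N=0$ and $x_0=x_1-\tfrac{2}{N}$ (which is needed in part (1) to handle the leftmost cell and is harmless in part (2) because $\lambda_0$ vanishes). One minor care is to make sure the computation of $\tfrac{d}{dt}\int\psi\,\rho_N\,\mathrm{d}x$ is justified: this follows from $\mathbf{X}_N\in H^1(0,T;\R^N)$ given by \Cref{prop:exist}, so $x_i$ is absolutely continuous and the product rule applies a.e.\ in $t$, which is enough to integrate in time against the compactly supported $\psi$.
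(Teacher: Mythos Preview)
Your proposal is correct and follows essentially the same approach as the paper: part (1) is a direct unpacking of the definitions together with \eqref{non_overlap}--\eqref{eq:slack_r} and the convention $\lambda_0=0$, and part (2) proceeds by differentiating $\int\psi\,\rho_N$, substituting \eqref{eq:EL_r}, and performing the same Abel/summation-by-parts step to turn $\sum_i\partial_x\psi(x_i)(\lambda_i-\lambda_{i-1})$ into $-\int\partial_{xx}\psi\,p_N$. The only cosmetic difference is that the paper uses a purely spatial test function $\psi\in C_c^\infty(\R)$ and computes $\tfrac{d}{dt}\int\psi\,\rho_N$, whereas you use a space--time test function; both yield the distributional identity \eqref{eq:wk_test_cty_disc}.
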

\begin{proof}
Fix any $i=0,\dots,N-1$ and $t\in[0,T]$. According to
the definition of $\tilde{\rho}_N$ and $p_N$, we have
\[
\tilde{\rho}_N(t,x) = \frac{1}{N(x_{i+1}(t)-x_i(t))}\quad \text{ and }\quad p_N(t,x) = \lambda_i(t), \quad \forall x\in[x_i(t),x_{i+1}(t)).
\]
so \eqref{non_overlap}--\eqref{eq:slack_r} implies the first part of the proposition.

Next, for any test function $\psi \in C_c^\infty(\R)$ and $t\in(0,T)$, we use~\eqref{eq:empirical} to get
\[
\frac{d}{dt}\int_\R \psi(x)\, \rho_N(t,\mathrm{d}x) = \frac{d}{dt}\frac{1}{N}\sum_{i=1}^N\psi(x_i(t)) = \frac{1}{N}\sum_{i=1}^N\psi'(x_i(t))\frac{dx_i}{dt}.
\]
Of course, we now use~\eqref{eq:EL_r} and substitute $\frac{dx_i}{dt}$ to get
\begin{align*}
\frac{d}{dt}\int_\R \psi(x)\, \rho_N(t,\mathrm{d}x) &= - \frac{1}{N}\sum_{i=1}^N\psi'(x_i)\phi'(x_i) - \sum_{i=1}^N\psi'(x_i)(\lambda_i-\lambda_{i-1}) \\
&= -\int_\R \psi'(x) \phi'(x) \, \rho_N(t,\mathrm{d}x) -\underbrace{ \sum_{i=1}^N\psi'(x_i)(\lambda_i-\lambda_{i-1})}_{=:I}.
\end{align*}
Using `summation by parts' with $\lambda_0=\lambda_N \equiv 0$, we reveal the following:
\begin{align*}
&\quad I = \sum_{i=1}^N\psi'(x_i)(\lambda_i-\lambda_{i-1}) = \sum_{i=1}^N\psi'(x_i)\lambda_i - \sum_{i=1}^N\psi'(x_{i+1})\lambda_i = -\sum_{i=1}^N(\psi'(x_{i+1})-\psi'(x_i))\lambda_i 	\\ &= -\sum_{i=1}^N\int_{x_i}^{x_{i+1}}\psi''(x)\lambda_i \, \mathrm{d}x = -\sum_{i=1}^N\int_{x_i}^{x_{i+1}}\psi''(x)p_N(t,x) \, \mathrm{d}x = -\int_\R \psi''(x)p_N(t,x)\,\mathrm{d}x.
\end{align*}
\end{proof}
Although $\tilde{p}_N$ does not appear in~\Cref{prop:disc_wk_macro}, the next result demonstrates (among other things) the higher regularity that this particular interpolation enjoys. Therefore, it plays a significant role in passing to the limit $N\to \infty$ in~\eqref{wk_sat_disc}.
\begin{lemma}[Estimates for the interpolations]
\label{lem:diff_interp}
The following estimates hold uniformly in $N$:
	\begin{align}
		\label{eq:rho_C_half}
	W_2^2(\rho_N(t_1),\,\rho_N(t_2)) + W_2^2(\tilde{\rho}_N(t_1),\,\tilde{\rho}_N(t_2))&\le |t_2-t_1|\,\mathcal{O}(\bar{\phi}_N),\quad \forall 0\le t_1<t_2\le T, 	\\
	\label{eq:p_bdd}
	\|p_N\|_{L_{t,x}^2}^2 + \|\tilde{p}_N\|_{L_{t,x}^2}^2 + \|\partial_x\tilde{p}_N\|_{L_{t,x}^2}^2 &= \mathcal{O}(\bar{\phi}_N), 	\\
	\label{eq:rho_close}
 \sup_{t\in[0,T]}W_1(\rho_N(t),\,\tilde{\rho}_N(t)) &= \mathcal{O}\left(\frac{1}{N}\right), \text{ and}	\\
	\label{eq:p_close}
	\|p_N - \tilde{p}_N\|_{L_t^2(0,T;\, L_x^\infty(\R))}^2 &= \mathcal{O}\left(\frac{\bar{\phi}_N}{N}\right).
\end{align}
\end{lemma}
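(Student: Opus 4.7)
The plan is to treat each of the four estimates in turn, in every case by relating the Eulerian quantities to their Lagrangian counterparts and invoking the uniform-in-$N$ bounds already established in~\Cref{lem:JKO_est,lem:tau_cpct} and~\Cref{prop:sigma_cpct_1,prop:BV_est}.

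For \eqref{eq:rho_C_half}, I would first observe that $X_N(t,\cdot)$ is the quantile function of $\rho_N(t)$ by construction of the piecewise-constant interpolation~\eqref{eq:XN}, while a direct computation from~\eqref{eq:histr} and~\eqref{eq:quantile_piece_lin} shows that $\tilde X_N(t,\cdot)$ is precisely the quantile function of the piecewise-constant histogram density $\tilde\rho_N(t)$ (the CDF of $\tilde\rho_N$ is piecewise linear with breakpoints $(x_i, i/N)$, whose inverse is exactly $\tilde X_N$). Hence the one-dimensional identity $W_2^2(\mu,\nu) = \|F_\mu^{-1} - F_\nu^{-1}\|_{L^2(0,1)}^2$ reduces the estimate to $\|X_N(t_2,\cdot) - X_N(t_1,\cdot)\|_{L^2_s}^2 \le |t_2 - t_1|\int_{t_1}^{t_2}\|\partial_t X_N\|_{L^2_s}^2\,\mathrm{d}t$ (by Cauchy--Schwarz in time) and likewise for $\tilde X_N$; both $L^2_{t,s}$ norms of the time derivatives are $\mathcal{O}(\bar\phi_N)$ by~\eqref{eq:tau_cpct_sigma}.

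For the pressure bounds in \eqref{eq:p_bdd}, the key observation is the complementarity~\eqref{eq:slack_r}: $\lambda_i(t)>0$ forces $x_{i+1}(t) - x_i(t) = 1/N$. Writing $\int_\R |p_N|^2\,\mathrm{d}x = \sum_i \lambda_i^2(x_{i+1}-x_i)$ and using that only indices with $\lambda_i>0$ contribute, I get $\int_\R |p_N|^2\,\mathrm{d}x \le (1/N)\sum_i \lambda_i^2$; integrating in $t$ and invoking~\eqref{eq:stnp} gives the required $\mathcal{O}(\bar\phi_N)$ bound. For $\tilde p_N$ and $\partial_x \tilde p_N$, on each band of length $1/N$ centered at $x_i$ the definition~\eqref{eq:disc_press_lin} yields $|\tilde p_N|^2 \le 2\lambda_i^2 + 2\lambda_{i-1}^2$ and $|\partial_x \tilde p_N|^2 = N^2|\lambda_i-\lambda_{i-1}|^2$; summing in $i$, integrating in $t$ and applying~\eqref{eq:stnp} and~\eqref{eq:stnpg} respectively finishes the two remaining estimates.

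The $W_1$ closeness \eqref{eq:rho_close} is the cleanest: the quantile representation gives $W_1(\rho_N(t),\tilde\rho_N(t)) = \|X_N(t,\cdot) - \tilde X_N(t,\cdot)\|_{L^1(0,1)}$, and the explicit formulas for the two interpolations yield, exactly as already computed in the proof of~\Cref{cor:sigma_cpct_1}, $\|X_N(t,\cdot) - \tilde X_N(t,\cdot)\|_{L^1_s} \le (x_N(t) - x_0(t))/(2N)$, which is $\mathcal{O}(1/N)$ uniformly on $[0,T]$ thanks to~\Cref{prop:BV_est}.

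The last estimate \eqref{eq:p_close} is the only one with a small subtlety. I would compare $p_N$ and $\tilde p_N$ pointwise: outside the transition bands $[x_i - 1/(2N),\, x_i + 1/(2N)]$, both functions take the same constant value $\lambda_i$ and agree; inside each band, a direct computation from~\eqref{eq:disc_press_const2} and~\eqref{eq:disc_press_lin} shows $|p_N - \tilde p_N| \le |\lambda_i - \lambda_{i-1}|/2$. Hence $\|p_N(t,\cdot) - \tilde p_N(t,\cdot)\|_{L^\infty_x}^2 \le \tfrac14\max_i|\lambda_i - \lambda_{i-1}|^2 \le (1/(4N^2))\sum_i |N(\lambda_i - \lambda_{i-1})|^2$, and integrating in $t$ and applying~\eqref{eq:stnpg} yields the claimed $\mathcal{O}(\bar\phi_N/N)$ bound. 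The main (mild) obstacle is the bookkeeping needed here: verifying that consecutive bands never overlap (which uses $x_{i+1}-x_i \ge 1/N$) and checking the boundary bands near $x_1 - 1/(2N)$ and $x_N + 1/(2N)$, where $\lambda_0 = \lambda_N = 0$ makes the argument go through at the endpoints.
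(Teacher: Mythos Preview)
Your proposal is correct and follows essentially the same route as the paper: quantile representation plus Cauchy--Schwarz for~\eqref{eq:rho_C_half}, the complementarity condition~\eqref{eq:slack_r} to reduce the $x$-integrals of $p_N,\tilde p_N,\partial_x\tilde p_N$ to the discrete sums controlled by~\eqref{eq:stnp}--\eqref{eq:stnpg}, \Cref{lem:dist_emp_q} together with~\eqref{eq:BV_est} for~\eqref{eq:rho_close}, and a pointwise comparison on the bands for~\eqref{eq:p_close}. The only cosmetic difference is that for~\eqref{eq:p_close} the paper partitions each interval $[x_i,x_{i+1})$ into three pieces (left half-band, possible gap, right half-band) rather than working band-by-band as you do, leading to the slightly looser bound $|\tilde p_N-p_N|\le |\lambda_i-\lambda_{i-1}|+|\lambda_{i+1}-\lambda_i|$ in place of your $|\lambda_i-\lambda_{i-1}|/2$, but the final estimate is the same.
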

\begin{proof}
To prove~\eqref{eq:rho_C_half}, fix any $0\le t_1<t_2\le T$.
 Since $\rho_N(t,\cdot) = X_N(t,\cdot)_\#(\mathrm{d}s )$ and $X_N\in H^1(0,T;\, L^2(0,1))$,we appeal to ~\cite[Proposition 2.17]{S15} to write
\[
W_2^2(\rho_N(t_1)\,\rho_N(t_2)) = \|X_N(t_2,\cdot)-X_N(t_1,\cdot)\|_{L^2(0,1)}^2 = \int_0^1\left|\int_{t_1}^{t_2}\partial_tX_N(t,s)\,\mathrm{d}t\right|^2\,\mathrm{d}s.
\]
By the Cauchy-Schwarz inequality and~\eqref{eq:tau_cpct_sigma} (c.f. the proof of~\Cref{prop:sigma_cpct_1}), we get
\[
W_2^2(\rho_N(t_1),\,\rho_N(t_2)) \le \int_0^1 |t_2-t_1|\int_{t_1}^{t_2}|\partial_t X_N(t,s)|^2\,\mathrm{d}t\,\mathrm{d}s\le \|\partial_t X_N\|_{L_{t,s}^2}^2\,|t_2-t_1| = |t_2-t_1|\mathcal{O}(\bar{\phi}_N).
\]
A similar estimate is deduced for $W_2^2(\tilde{\rho}_N(t_1),\tilde{\rho}_N(t_2))$ and we conclude~\eqref{eq:rho_C_half}.

To prove~\eqref{eq:p_bdd}, note that we have
\[
\|p_N\|_{L_{t,s}^2}^2 = \int_0^T\int_\R |p_N|^2 \, \mathrm{d}x \,\mathrm{d}t = \int_0^T \sum_{i=1}^N \int_{x_i}^{x_{i+1}}|\lambda_i(t)|^2\,\mathrm{d}x \, \mathrm{d}t = \int_0^T\sum_{i=1}^N|\lambda_i(t)|^2(x_{i+1}(t)-x_i(t))\,\mathrm{d}t.
\]
Using~\eqref{eq:slack_r} with $2r = \frac{1}{N}$ and~\eqref{eq:tau_cpct}, we can write
\[
\|p_N\|_{L_{t,s}^2}^2 = \int_0^T\frac{1}{N}\sum_{i=1}^N|\lambda_i(t)|^2\, \mathrm{d}t = \mathcal{O}(\bar{\phi}_N).
\]
As for $\tilde{p}_N$ and $\partial_x\tilde{p}_N$, the bounds are calculated in a similar way to what was done in the proof of~\Cref{prop:sigma_cpct_1}.


For the comparison between $\rho_N$ and $\tilde{\rho}_N$, we apply~\Cref{lem:dist_emp_q} and~\eqref{eq:BV_est} at any $t\in[0,T]$ to get
\[
W_1(\rho_N(t),\tilde{\rho}_N(t)) = \frac{1}{2N}\left(x_N(t) - x_0(t)\right) = \mathcal{O}\left(\frac{1}{N}\right).
\]
Lastly, to prove~\eqref{eq:p_close}, fix $i=1,...,N$. We will use the particle radius $r = \frac{1}{2N}$ for notational simplicity.
The interval $[x_i(t),x_{i+1}(t))$ can be partitioned into $I_1 \sqcup I_2\sqcup I_3$, where
\[
I_1:= [x_i(t),x_i(t) + r),\, I_2:= [x_i(t) + r ,x_{i+1}(t) - r ),\text{ and } I_3:= [x_{i+1}(t) - r, x_{i+1}(t)).
\]
Then we have
\[
\tilde{p}_N(t,x) - p_N(t,x) =\left\{
\begin{array}{cl}
N(\lambda_i(t) - \lambda_{i-1}(t))\left(x - (x_i(t) - r)\right), 	&\text{if }x \in I_1 \\
0, 	&\text{if }x \in I_2,\\
N(\lambda_{i+1}(t) - \lambda_i(t))\left(x - (x_{i+1}(t) + r)\right), 	&\text{if }x\in I_3.
\end{array}
\right..
\]
Since the coefficients of $\lambda_i - \lambda_{i-1}$ or $\lambda_{i+1} - \lambda_i$ are bounded by $1$, we conclude that 
\[
|\tilde{p}_N(t,x) - p_N(t,x)|\le |\lambda_i(t) - \lambda_{i-1}(t)| + |\lambda_{i+1}(t) - \lambda_i(t)|.
\]
Using Young's inequality, adding extraneous terms, and recalling $\lambda_0 = \lambda_N = 0$, we deduce
\begin{align*}
|\tilde{p}_N(t,x) - p_N(t,x)|^2 &\le 2|\lambda_i(t) - \lambda_{i-1}(t)|^2 + 2|\lambda_{i+1}(t) - \lambda_i(t)|^2 \le 
 \frac{4}{N^2}\sum_{i=1}^N|N(\lambda_i(t) - \lambda_{i-1}(t))|^2.
\end{align*}
Since this inequality is true for any $x\in[x_i(t),x_{i+1}(t))$ and any $i=1,\dots, N$, we recall~\eqref{eq:tau_cpct} to get
\[
\int_0^T\esssup_{x\in \R}|\tilde{p}_N(t,x) - p_N(t,x)|^2\, \mathrm{d}t \le \frac{4}{N}\left(\int_0^T \frac{1}{N}\sum_{i=1}^N|N(\lambda_i(t) - \lambda_{i-1}(t))|^2 \, \mathrm{d}t\right) = \mathcal{O}\left(\frac{\bar{\phi}_N}{N}\right).
\]
\end{proof}
As we will make precise in the following result, the inequalities~\eqref{eq:rho_C_half} and~\eqref{eq:p_bdd} provide the necessary compactness for the quadruple $(\rho_N,\tilde{\rho}_N,p_N,\tilde{p}_N)$. The estimates~\eqref{eq:rho_close} and~\eqref{eq:p_close} ensure that the limits for $\rho_N$ and $\tilde{\rho}_N$ converge to the same limit and respectively for $p_N$ and $\tilde{p}_N$.
\begin{corollary}
\label{cor:rho_p}
There exists $\rho \in AC([0,T];\,\mathscr{P}_2)$ and $p\in L^2(0,T;\, H^1(\R))$ such that, up to a subsequence, 
\begin{enumerate}[label=(\alph*)]
	\item \textbf{both} $\rho_N$ and $\tilde{\rho}_N$ converge to $\rho$ narrowly uniformly in $t\in[0,T]$ as $N\to \infty$ and
	\item \textbf{both} $p_N$ and $\tilde{p}_N$ converge to $p$ weakly in $L_{t,x}^2([0,T]\times\R)$. Moreover, $\partial_x\tilde{p}_N \rightharpoonup \partial_xp$ weakly in $L_{t,x}^2([0,T]\times \R)$ as $N\to \infty$.
\end{enumerate}
\end{corollary}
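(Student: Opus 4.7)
The plan is to build the limiting density by an Arzelà--Ascoli argument in $(\mathscr{P}_2(\R), W_2)$ and then identify the two limits of $\rho_N$ and $\tilde\rho_N$ using the $W_1$-closeness estimate \eqref{eq:rho_close}; for the pressures, I would combine weak $L^2$ compactness (from \eqref{eq:p_bdd}) with the pointwise-in-$t$ $L^\infty$-closeness \eqref{eq:p_close} to pin down a common limit. The key preliminary observation, which I would establish first, is a uniform support bound: from \eqref{eq:utsnx} together with the coercivity in \eqref{phi} one gets $\int_\R |x|^2 \rho_N(t,\mathrm dx) \le \bar\phi_N/c_0$ uniformly in $t$ and $N$, so at each time at least one particle lies in a fixed bounded interval, and combined with the BV estimate \eqref{eq:BV_est} on the particle spread this forces all particles---hence the supports of $\rho_N(t), \tilde\rho_N(t), p_N(t,\cdot), \tilde p_N(t,\cdot)$---into a single compact set $\Omega\subset\R$ independent of $N$ and $t\in[0,T]$.

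With this uniform support bound in hand, the density compactness is straightforward. The estimate \eqref{eq:rho_C_half} provides uniform $\tfrac12$-Hölder equicontinuity in $W_2$, and since $\mathscr{P}(\Omega)$ is $W_2$-compact, a refined Arzelà--Ascoli theorem in $(\mathscr{P}_2, W_2)$ (as in, e.g., \cite[Proposition 3.3.1]{AGS08}) yields subsequences along which $\rho_N \to \rho$ and $\tilde\rho_N \to \tilde\rho$ uniformly in $W_2$, hence uniformly narrowly. Passing \eqref{eq:rho_C_half} to the limit shows $\rho,\tilde\rho\in AC([0,T]; \mathscr{P}_2)$, and the lower semicontinuity of $W_1$ together with \eqref{eq:rho_close} forces $\rho = \tilde\rho$, establishing part (a).

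For part (b), \eqref{eq:p_bdd} provides uniform bounds on $\tilde p_N$ in $L^2(0,T; H^1(\R))$ and on $p_N$ in $L^2_{t,x}$, so I would extract further subsequences with $\tilde p_N \rightharpoonup p$ in $L^2(0,T; H^1(\R))$ (which automatically gives $\partial_x \tilde p_N \rightharpoonup \partial_x p$ in $L^2_{t,x}$) and $p_N \rightharpoonup q$ in $L^2_{t,x}$. To identify $p=q$, the support bound upgrades \eqref{eq:p_close} to
\[
\|p_N - \tilde p_N\|_{L^2_{t,x}}^2 \le |\Omega|\,\|p_N - \tilde p_N\|_{L^2_t L^\infty_x}^2 = \mathcal{O}(\bar\phi_N/N) \to 0,
\]
so $p_N - \tilde p_N \to 0$ strongly in $L^2_{t,x}$ and the two weak limits coincide. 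The only nontrivial point is the uniform support bound in the first paragraph---it is what both promotes relative compactness in $\mathscr{P}_2$ (as opposed to merely narrow tightness) and makes \eqref{eq:p_close} useful for identifying the pressure limits; once it is in place, the remaining steps are direct applications of the estimates collected in \Cref{lem:diff_interp}.
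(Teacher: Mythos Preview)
Your proposal is correct and follows essentially the same route as the paper: Arzel\`a--Ascoli via \eqref{eq:rho_C_half} for part (a), weak $L^2$ compactness via \eqref{eq:p_bdd} for part (b), and identification of the two limits through \eqref{eq:rho_close} and \eqref{eq:p_close} respectively. The one substantive addition in your argument is the explicit uniform compact-support bound (combining the second-moment estimate with \eqref{eq:BV_est}); the paper leaves this implicit, simply asserting that \eqref{eq:p_close} forces $p=\tilde p$, whereas you use the support bound to upgrade the $L^2_tL^\infty_x$ closeness to $L^2_{t,x}$ closeness---a cleaner justification of the same conclusion.
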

\begin{proof}
\begin{enumerate}[label = (\alph*)]
	\item From~\eqref{eq:rho_C_half}, we deduce that $\rho_N$ and $\tilde{\rho}_N$ are uniformly $\frac{1}{2}$-H\"older equicontinuous in time. By a refined version of the Ascoli-Arzel\`a theorem (c.f.~\cite[Proposition 3.3.1]{AGS08}), there exists $\rho$ and $\tilde{\rho}$ such that $\rho_N \to \rho$ and $\tilde{\rho}_N\to \tilde{\rho}$ narrowly uniformly in $t\in[0,T]$. On the other hand, \eqref{eq:rho_close} implies $\rho = \tilde{\rho}$.
	\item From~\eqref{eq:p_bdd}, we deduce that $p_N, \tilde{p}_N$, and $\partial_x \tilde{p}_N$ are uniformly bounded in $L^2([0,T]\times \R)$. Hence, there exists $p\in L^2([0,T]\times \R)$ and $\tilde{p} \in L^2(0,T;\,H^1(\R))$ such that $p_N \rightharpoonup p$ weakly in $L^2([0,T]\times \R)$ and $\tilde{p}_N \rightharpoonup \tilde{p}$ weakly in $L^2(0,T;\,H^1(\R))$. On the other hand, \eqref{eq:p_close} implies that $p = \tilde{p}$.
\end{enumerate}
\end{proof}
We are now in a position to prove~\Cref{thm:q2}.
\begin{proof}[Proof of~\Cref{thm:q2}]
Clearly, the limits $(\rho,p)$ from~\Cref{cor:rho_p} are the candidate weak solutions to~\eqref{eq:macro}. In~\Cref{cor:rho_p}, we only established convergence \textit{along a subsequence}. However, we know from~\cite{DMM16} that any weak solution to~\eqref{eq:macro} is unique. Therefore, all we need to show is that $(\rho,p)$ is a weak solution to~\eqref{eq:macro} in the sense of~\Cref{def:wk_macro}. 
\medskip

First, we need to  pass to the limit $N\to \infty$ in~\eqref{wk_sat_disc} to prove \eqref{wk_sat}. The convergence in~\Cref{cor:rho_p} is enough to confirm $
p\ge0\text{ and }0 \le \rho \le 1$. As for the remaining part of the saturation condition, we will use the same trick as in~\Cref{lem:slack}: Since $(\tilde{\rho}_N,p_N)$ satisfy~\eqref{wk_sat_disc}, we have
\[
\tilde{p}_N(1 - \tilde{\rho}_N) = (\tilde{p}_N - p_N)(1 - \tilde{\rho}_N).
\]
Using H\"older's inequality, the uniform bound $0\le \tilde{\rho}_N\le 1$, and~\eqref{eq:p_close}, we therefore get
\[
\|\tilde{p}_N(1-\tilde{\rho}_N)\|_{L^1([0,T]\times\R)} = \|(\tilde{p}_N-p_N)(1-\tilde{\rho}_N)\|_{L^1([0,T]\times\R)} \le \|1 - \tilde{\rho}_N\|_{L_t^2L_x^1}\|\tilde{p}_N - p_N\|_{L_t^2L_x^\infty} 
 \overset{N\to \infty}{\to}0.
\]
Again, since $0\le \tilde{\rho}_N\le 1$, a density argument allows to deduce that
\[
\tilde{\rho}_N \to \rho\text{ in } L^\infty(0,T;\text{weak-}L^p(\R))\text{ for any }p\in(1,\infty).
\]
Since we know that $\tilde{p}_N\to p$ weakly in $L^2(0,T;\,H^1(\R))$, we have a weak-strong convergence pair and we obtain
\[
\int_0^T\int_\R p(1 - \rho)\,\mathrm{d}x\,\mathrm{d}t = \|p(1-\rho)\|_{L^1([0,T]\times \R)} = \lim_{N\to \infty}\|\tilde{p}_N(1-\tilde{\rho}_N)\|_{L^1([0,T]\times\R)} = 0.
\]
We already know that $p(1-\rho)\ge 0$ for almost every $(t,x)\in[0,T]\times \R$, so we deduce $p(1-\rho) = 0$ and \eqref{wk_sat} follows.

\medskip

Next, we know that $\rho_N \to \rho$ narrowly uniformly in $t\in[0,T]$ by~\eqref{eq:rho_close} and $p_N \rightharpoonup p$ weakly in $L^2([0,T]\times \R)$,
so we can pass to the limit (in the sense of distribution) in \eqref{eq:wk_test_cty_disc}
and 
deduce \eqref{eq:wk_test}.

\medskip

Finally, in order to show that $\rho(0,x) =\rho^0(x)$, let us recall that
\[
\rho_N(0,\cdot) = X_N^0(\cdot)_\#(\mathrm{d}s )\text{ and }\rho^0(\cdot) = X^0(\cdot)_\#(\mathrm{d}s ).
\]
Using~\cite[Proposition 2.17]{S15} and~\eqref{eq:x^0_conv}, we get
\begin{align*}
	&\quad W_1(\rho(0,\cdot), \rho^0(\cdot)) \le \liminf_{N\to\infty}W_1(\rho_N(0,\cdot),\rho^0(\cdot)) = \liminf_{N\to\infty}\|X_N^0 - X^0\|_{L_s^1(0,1)} = 0.
\end{align*}
\end{proof}

\appendix

	\section{Sampling of initial data}
	\label{sec:sampling}
In this section, we briefly discuss how $ X^0$ and $\mathbf{X}_N^0$ can be sampled in terms of an initial probability density $\rho^0$ in the context of~\eqref{eq:macro} so that assumptions~\eqref{x^0} and \eqref{x^0_phi}--\eqref{eq:x^0_conv} are fulfilled.
	\begin{lemma}[Sampling of $\rho^0$]
	\label{lem:sample_rho^0}
	Let $\rho^0$ be a probability density function which 
	satisfies~\eqref{rho^0} and denote 
	\[
	\xi_L := \inf(\supp\rho^0), \quad \xi_R:= \sup(\supp\rho^0).
	\]
	Define $X^0:[0,1]\to \R$ to be the quantile function of $\rho^0$ given by~\eqref{eq:dist} so that $X^0(0) := \xi_L$ and
	\[ 
	X^0(s):=\inf\left\{x\in \R \, \left| \, \int_{-\infty}^x \rho^0(y)\, \mathrm{d}y\right. \ge s\right\},\quad \forall s\in(0,1].
	\]
	Define $\mathbf{X}_N^0=(x_1^0,x_2^0,\dots,x_N^0)$ by setting
	\[
	x_i^0:= X^0\left(\frac{i}{N}\right), \quad \forall i=1,\dots, N.
	\]
	Then, $X^0$ satisfies~\eqref{x^0}, $\mathbf{X}_N^0$ belongs to $\mathcal{K}_N$ and  satisfies~\eqref{x^0_phi}--\eqref{eq:x^0_conv}.
	\end{lemma}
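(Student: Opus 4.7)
The plan is to verify the five assertions in turn; each will be a direct consequence of one of two features of hypothesis~\eqref{rho^0}: the bound $\rho^0\leq 1$ (which makes the cumulative distribution function $F(x):=\int_{-\infty}^x\rho^0(y)\,\mathrm{d}y$ Lipschitz continuous), and the compactness of $\supp\rho^0$ (which makes $X^0$ uniformly bounded). The first step will be to establish the continuity identity $F(X^0(s))=s$ for every $s\in[0,1]$: this is standard once one knows $F$ is continuous, via the infimum definition of $X^0$.

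With that identity in hand, \eqref{x^0} and the non-overlap $\mathbf{X}_N^0\in\mathcal{K}_N$ will follow at once: for $0\leq s_1<s_2\leq 1$,
\[
s_2-s_1 = F(X^0(s_2))-F(X^0(s_1)) = \int_{X^0(s_1)}^{X^0(s_2)}\rho^0(y)\,\mathrm{d}y \leq X^0(s_2)-X^0(s_1),
\]
because $\rho^0\leq 1$; specializing to $s_1=i/N$, $s_2=(i+1)/N$ gives $x_{i+1}^0-x_i^0\geq 1/N$. I would then obtain \eqref{x^0_phi} and \eqref{bv_endpts} from $X^0([0,1])\subseteq[\xi_L,\xi_R]$: the quantity $M:=\max_{[\xi_L,\xi_R]}\phi$ is finite by continuity of $\phi$, so $\int_0^1\phi(X_N^0(s))\,\mathrm{d}s = \frac{1}{N}\sum_{i=1}^N\phi(X^0(i/N)) \leq M$, and $|X_N^0(1)-X_N^0(0)| = |x_N^0-x_1^0| \leq \xi_R-\xi_L$, both bounds independent of $N$.

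For \eqref{eq:x^0_conv}, the plan is to view $X_N^0$ as a right-endpoint sampling of the monotone function $X^0$: by construction $X_N^0(s)=X^0(\lceil Ns\rceil/N)$, so $X_N^0(s)\to X^0(s)$ at every continuity point of $X^0$. Because monotone functions on $[0,1]$ have at most countably many discontinuities, this pointwise convergence holds almost everywhere; combined with the uniform bound $|X_N^0(s)|\leq\max(|\xi_L|,|\xi_R|)$, the dominated convergence theorem yields $\|X_N^0-X^0\|_{L^1(0,1)}\to 0$. I do not anticipate a genuine obstacle here; the only subtle point is the preliminary identity $F(X^0(s))=s$, which would fail without the density bound $\rho^0\leq 1$ and is where the specific hypothesis \eqref{rho^0} intervenes decisively.
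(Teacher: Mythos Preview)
Your proof is correct. The verification of~\eqref{x^0}, the membership $\mathbf{X}_N^0\in\mathcal{K}_N$, and~\eqref{bv_endpts} proceed exactly as in the paper, all resting on the identity $F(X^0(s))=s$ and the bound $\rho^0\leq 1$.

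Where you diverge is in the treatment of~\eqref{x^0_phi} and~\eqref{eq:x^0_conv}. For~\eqref{x^0_phi} you simply bound $\phi$ by its maximum on $[\xi_L,\xi_R]$, which is cleaner; the paper instead compares $\frac{1}{N}\sum_i\phi(x_i^0)$ to $\int\phi\,\rho^0$ via a Riemann-sum estimate, obtaining the quantitative bound $\bigl|\frac{1}{N}\sum_i\phi(x_i^0)-\int\phi\,\rho^0\bigr|\leq \frac{1}{N}\|\phi'\|_{L^\infty([\xi_L,\xi_R])}(\xi_R-\xi_L)$. For~\eqref{eq:x^0_conv} you invoke a.e.\ convergence of the right-endpoint samples plus dominated convergence, which is perfectly valid but yields no rate. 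The paper exploits the monotonicity $X_N^0\geq X^0$ to write $\|X_N^0-X^0\|_{L^1}=\frac{1}{N}\sum_i x_i^0-\int x\,\rho^0(x)\,\mathrm{d}x$, then reuses the Riemann-sum estimate (with $\phi(x)=x$) to get the explicit bound $\frac{1}{N}(\xi_R-\xi_L)$. Your route is more elementary and self-contained; the paper's route is unified (one estimate serves both~\eqref{x^0_phi} and~\eqref{eq:x^0_conv}) and delivers an $O(1/N)$ rate of convergence.
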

	\begin{proof}
	Since $\rho^0$ is bounded, it will be useful to recall~\eqref{eq:x_rho} which says
	\begin{equation}
		\label{eq:quant_inc}
		\int_{-\infty}^{X^0(s)}\rho^0(y)\,\mathrm{d}y = s, \quad \forall s\in[0,1].
	\end{equation}
By the assumption that $\|\rho^0\|_{L^\infty}\le 1$, for any $s_1,s_2\in[0,1]$ with $s_1<s_2$, we use~\eqref{eq:quant_inc} to deduce
	\[
	X^0(s_2) - X^0(s_1) = \int_{X^0(s_1)}^{X^0(s_2)} \, \mathrm{d}y \ge \int_{X^0(s_1)}^{X^0(s_2)} \rho^0(y) \, \mathrm{d}y = \int_{-\infty}^{X^0(s_2)} \rho^0(y) \, \mathrm{d}y - \int_{-\infty}^{X^0(s_1)} \rho^0(y) \, \mathrm{d}y = s_2-s_1
	\]
which means that $X^0$ satisfies~\eqref{x^0}. 
By definition of $\mathbf{X}_N^0$, this also implies $\mathbf{X}_N^0 \in \mathcal{K}_N$.

	Next, we note that $\mathbf{X}_N^0$ satisfies \eqref{bv_endpts} 
	since $\xi_L^0\leq x_1^0 \leq x_N^0=\xi_R$. 
	
	
	\medskip
	
To show that $\mathbf{X}_N^0$ satisfies~\eqref{x^0_phi}, we first point out that $\int\phi(x) \rho^0(x)\,\mathrm{d}x\in \R$ from~\eqref{phi} and~\eqref{rho^0}, and that~\eqref{eq:quant_inc} implies
$$
\int_{-\infty}^{x_1^0}  \rho^0(x)\, \mathrm{d}x = \int_{\xi_L}^{x_1^0}  \rho^0(x)\, \mathrm{d}x =\frac 1 N 
$$
and 
$$
\int_{x_i^0}^{x_{i+1}^0} \rho^0(x)\, \mathrm{d}x = \frac 1 N \quad \forall i=1,\dots, N-1.
$$
We can thus write
$$
\frac 1 N  \sum_{i=1}^N \phi(x_i^0) =  \int_{\xi_L} ^{x_1^0}  \phi(x_1^0) \rho^0(x)\,\mathrm{d}x 
+		 \sum_{i=1}^{N-1} \int_{x_{i}^0}^{x_{i+1}^0}  \phi(x_{i+1}^0) \rho^0(x)\,\mathrm{d}x
$$
and so
\begin{align}
 \left|	 
	\frac 1 N  \sum_{i=1}^N \phi(x_i^0)-\int_\R\phi(x) \rho^0(x)\,\mathrm{d}x \right|
& =	\left| 
	  \int_{\xi_L} ^{x_1^0 }( \phi(x_1^0)-\phi(x) )\rho^0(x)\,\mathrm{d}x +	 \sum_{i=1}^{N-1} \int_{x_{i}^0}^{x_{i+1}^0} (\phi(x_{i+1}^0)-\phi(x))\rho^0(x)\,\mathrm{d}x \right| \nonumber \\
& \leq \|\phi'\|_{L^\infty}
\left[	  \int_{\xi_L} ^{x_1^0 } (x_1^0-\xi_L )\rho^0(x)\,\mathrm{d}x 
+		 \sum_{i=1}^{N-1} \int_{x_{i}^0}^{x_{i+1}^0}  (x_{i+1}^0-x_i^0) \rho^0(x)\,\mathrm{d}x 
\right]\nonumber \\
&  = \|\phi'\|_{L^\infty}
\frac 1 N \left[	 x_1^0-\xi_L 
+		 \sum_{i=1}^{N-1}  x_{i+1}^0-x_i^0 
\right] \nonumber \\
&= \|\phi'\|_{L^\infty}
\frac 1 N \left|	\xi_R-\xi_L  
\right| .\label{eq:phijhgf}
\end{align}
We deduce~\eqref{x^0_phi} since
$$
\frac 1 N  \sum_{i=1}^N \phi(x_i^0) \leq \int_\R\phi(x) \rho^0(x)\,\mathrm{d}x  + \|\phi'\|_{L^\infty}
\frac 1 N |\xi_R-\xi_L |  \leq C.
$$

	\medskip

Finally, we check~\eqref{eq:x^0_conv}. First we can see that $ X_N^0(s)\geq X^0(s)$ for all $s\in [0,1]$:
		Indeed, the definition of $X_N^0$ (see \eqref{eq:XN}) and the monotonicity of $X^0$ implies that for $s\in(\frac{i-1}{N},\frac iN]$, we have 
		$$ X_N^0(s) = x_i^0 = X^0\left(\frac i N\right) \geq X^0(s).$$			
We can thus write
\begin{align*}
		\|X_N^0 - X^0\|_{L^1(0,1)} = \int_0^1X_N^0(s) - X^0(s) \, \mathrm{d}s 
		& = 	\sum_{i=1}^{N}  \int_{\frac{i-1}{N}}^\frac{i}{N} x_i^0  \, \mathrm{d}s-  \int_0^1  X^0(s) \, \mathrm{d}s = 	\frac 1 N \sum_{i=1}^{N}  x_i^0  -  \int_\R x\rho^0(x)\, \mathrm{d}x 
	\end{align*}
where the last equality follows from the fact that $\rho^0 := X^0(\cdot)_\# (\mathrm{d}s )$.
We can now conclude by using inequality \eqref{eq:phijhgf} with $\phi(x)=x$ (and $\|\phi'\|\leq 1$) to get 
		$$\|X_N^0 - X^0\|_{L^1(0,1)}  \leq \frac 1 N |\xi_R-\xi_L |  \to 0. $$

	\end{proof}

\section{Eulerian/Lagrangian description and Wasserstein metric}
\label{sec:prelim}
We denote $\mathscr{P}(\Omega)$ the space of probability measures on $\Omega$. For $p\in[1,\infty)$, we define the subset $\mathscr{P}_p\subset \mathscr{P}(\R)$ by
\[
\mathscr{P}_p:= \left\{\rho \in \mathscr{P}(\R)\, \left| \,  \int_\R |x|^p \, \rho(\mathrm{d}x) <+\infty \right.\right\}.
\]
We endow $\mathscr{P}_p$ with the $p$-Wasserstein metric
\[
W_p^p(\rho_1,\rho_2)= \inf_{\gamma\in\Gamma(\rho_1,\rho_2)}\int_{\R\times\R}|x-y|^p\,\gamma(\mathrm{d}x,\mathrm{d}y), \quad \forall \rho_1,\rho_2\in\mathscr{P}_p,
\]
where the set of plans between $\rho_1$ and $\rho_2$ is given by
\[
\Gamma(\rho_1,\rho_2)= \left\{
\gamma \in \mathscr{P}(\R\times \R)\, \left| \, \gamma(A \times \R) = \rho_0(A)\text{ and }\gamma(\R \times A) = \rho_1(A)\text{ for every Borel subset }A\subset \R\right.
\right\}.
\]
We refer to the narrow topology on $\mathscr{P}_p$ being in duality with continuous and bounded functions. It is well-known that convergence in the Wasserstein topology implies narrow convergence and convergence in the $p$--moments (c.f.~\cite[Proposition 7.1.5]{AGS08}). We recall, for $p=1$, the Kantorovich-Rubinstein duality formula (c.f.~\cite[Remark 6.5]{V09}) which states
\begin{equation}
	\label{eq:KRdual}
	W_1(\rho_1,\rho_2) = \max \left\{\int_\R \psi(x) \rho_1(\mathrm{d}x) - \left.\int_\R \psi(x)\rho_2(\mathrm{d}x)\,\right| \, \psi:\R\to \R \text{ such that }\mathrm{Lip}(\psi)\le 1
	\right\}.
\end{equation}
We say that $\rho \in AC([0,T]; \, \mathscr{P}_2)$ if and only if there exists some $m\in L^2(0,T)$ such that
\[
W_2(\rho(t_1),\,\rho(t_2))\le \int_{t_1}^{t_2}|m(t)|\,\mathrm{d}t, \quad \forall 0\le t_1<t_2\le T.
\]
We recall that, given $N$ points $x_1<\dots< x_N$ in  $\R$, we can define the empirical measure $\rho_N$ by \eqref{eq:empirical} and the distribution $\tilde \rho_N$ by \eqref{eq:histr} (which requires the addition of a point  $x_0=x_1-\frac 2 N$).
We then have:
	\begin{lemma}
	\label{lem:dist_emp_q}
For any $p\in[1,\infty)$ the $p$-Wasserstein distance between $\rho_N$ and $\tilde{\rho}_N$  given by~\eqref{eq:empirical} and~\eqref{eq:histr}, respectively, is given by
	\[
	W_p^p(\rho_N, \, \tilde{\rho}_N)= \int_0^1|X_N(s) - \tilde{X}_N(s)|^p\, \mathrm{d}s = \frac{1}{(p+1)N}\sum_{i=0}^{N-1}|x_{i+1}-x_i|^p,
	\]
	where $X_N$ and $\tilde{X}_N$ are defined by~\eqref{eq:XN} and~\eqref{eq:quantile_piece_lin}, respectively. In particular, we have 
	\[
	W_1(\rho_N,\tilde{\rho}_N) = \int_0^1|X_N(s) - \tilde{X}_N(s)|\,\mathrm{d}s =  \frac{1}{2N}\left(x_{N}-x_0\right).
	\]
\end{lemma}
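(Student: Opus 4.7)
The plan is to invoke the classical one-dimensional identity that the $p$-Wasserstein distance between two probability measures on $\R$ equals the $L^p(0,1)$ distance between their quantile functions (pseudo-inverses of their cumulative distribution functions). Specifically, for any $\mu,\nu \in \mathscr{P}_p(\R)$, one has
\[
W_p^p(\mu,\nu) = \int_0^1 |F_\mu^{-1}(s) - F_\nu^{-1}(s)|^p\, \mathrm{d}s,
\]
see e.g.~\cite[Proposition 2.17]{S15}. Hence the first step is simply to identify $X_N$ and $\tilde X_N$ as the quantile functions of $\rho_N$ and $\tilde\rho_N$ respectively.

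For $\rho_N$ given by~\eqref{eq:empirical}, the cumulative distribution function is the step function that jumps by $1/N$ at each atom $x_i$, so its pseudo-inverse takes the constant value $x_i$ on $((i-1)/N, i/N]$, which is exactly the definition~\eqref{eq:XN} of $X_N$. For $\tilde\rho_N$ given by~\eqref{eq:histr}, the mass on each interval $[x_i, x_{i+1})$ equals $1/N$, so its CDF is continuous, piecewise linear, taking the value $i/N$ at $x_i$ for $i=0,\dots,N$; consequently its inverse is continuous, piecewise linear, and passes through the nodes $(i/N, x_i)$, matching~\eqref{eq:quantile_piece_lin} for $\tilde X_N$. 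This gives the first equality of the lemma.

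Next, I would compute the $L^p$ integral explicitly by splitting $[0,1]$ into the intervals $[i/N,(i+1)/N]$ for $i=0,\dots,N-1$. On each such interval, $X_N(s) = x_{i+1}$ is constant, while $\tilde X_N(s) = x_i + N(x_{i+1}-x_i)(s - i/N)$, so
\[
X_N(s) - \tilde X_N(s) = (x_{i+1}-x_i)\bigl(1 - N(s - i/N)\bigr).
\]
The substitution $u = N(s-i/N)$ yields
\[
\int_{i/N}^{(i+1)/N} |X_N(s)-\tilde X_N(s)|^p\, \mathrm{d}s = \frac{|x_{i+1}-x_i|^p}{N}\int_0^1 (1-u)^p\, \mathrm{d}u = \frac{|x_{i+1}-x_i|^p}{N(p+1)}.
\]
Summing over $i=0,\dots,N-1$ gives the second equality.

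Finally, for the special case $p=1$, the non-overlapping condition $x_{i+1}-x_i > 0$ allows dropping the absolute value, and the sum telescopes to $x_N - x_0$, yielding $W_1(\rho_N,\tilde\rho_N) = \frac{1}{2N}(x_N - x_0)$. There is no real obstacle here; the only mild subtlety is bookkeeping around the auxiliary point $x_0 = x_1 - 2/N$ and the half-open index conventions, which must be handled carefully so that the interval $[i/N,(i+1)/N]$ is used consistently in both the step-function $X_N$ and the piecewise-linear $\tilde X_N$.
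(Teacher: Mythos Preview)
Your proposal is correct and follows essentially the same route as the paper: invoke \cite[Proposition 2.17]{S15} for the first equality, then compute the $L^p$ integral piecewise and telescope for $p=1$. Your write-up is in fact slightly more careful than the paper's, since you explicitly verify that $X_N$ and $\tilde X_N$ are the quantile functions of $\rho_N$ and $\tilde\rho_N$, whereas the paper takes this for granted; your pointwise formula $X_N-\tilde X_N=(x_{i+1}-x_i)(1-N(s-i/N))$ differs from the paper's $N(x_{i+1}-x_i)(s-i/N)$ by the reflection $u\mapsto 1-u$, but of course both integrate to the same value.
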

\begin{proof}
	Proposition 2.17 from~\cite{S15} asserts the first desired equality
	\[
	W_p^p(\rho_N,\tilde{\rho}_N) = \int_0^1|X_N(s) - \tilde{X}_N(s)|^p\, \mathrm{d}s.
	\]
	As for the second equality, notice that on each piece $s\in\left[\frac{i}{N},\frac{i+1}{N}\right)$ for $i=0,\dots, N-1$, we have
	\[
	|X_N(s) - \tilde{X}_N(s)| = N(x_{i+1}-x_i)\left(s - \frac{i}{N}\right).
	\]
	Hence, we directly calculate
	\begin{align*}
		&\quad \int_0^1|X_N(s) - \tilde{X}_N(s)|^p\, \mathrm{d}s = \sum_{i=0}^{N-1}\int_\frac{i}{N}^\frac{i+1}{N}|X_N(s)-\tilde{X}_N(s)|^p\, \mathrm{d}s = \sum_{i=0}^{N-1} |N(x_{i+1}-x_i)|^p\int_\frac{i}{N}^\frac{i+1}{N}\left|s - \frac{i}{N}\right|^p\, \mathrm{d}s 	\\
		&= \sum_{i=0}^{N-1} |N(x_{i+1}-x_i)|^p \, \frac{1}{(p+1)N^{p+1}} = \frac{1}{(p+1)N}\sum_{i=0}^{N-1} |x_{i+1}-x_i|^p.
	\end{align*}
	Finally, when $p=1$, the sum can be simplified due to its telescopic structure.
\end{proof}

	\bibliographystyle{abbrv}
	\bibliography{refs}
\end{document}